\newtheorem{thm}{Theorem}
\newtheorem{dfn}{Definition}
\newtheorem{lemma}{Lemma}
\newtheorem{cor}{Corollary}
\newtheorem{question}{Question}
\newcommand{\bb}{\mathsf{b}} 
\newcommand{\di}{\mathsf{d}} 
\newcommand{\M}{\mathcal{M}} 
\newcommand{\modu}{\mathrm{mod}}
\begin{document}

\title{On the Hochschild (Co)homology of\\ 
Quantum Homogeneous Spaces}

\author{Ulrich Kr\"ahmer} 

\address{University of Glasgow, Department of
Mathematics, University Gardens, 
Glasgow G12 8QW, UK}

\email{ukraehmer@maths.gla.ac.uk}

\begin{abstract}
The recent result of Brown and Zhang 
establishing Poincar\'e duality in the 
Hochschild (co)ho\-mo\-lo\-gy
of a large class of Hopf algebras
is extended to right coideal subalgebras  
over which the Hopf algebra is faithfully
flat, and applied to the standard Podle\'s 
quantum 2-sphere.
\end{abstract}
\maketitle

\section{Introduction} 
\subsection{Theory} 
As work in particular by Takeuchi 
\cite{takeuchi}, Masuoka and Wigner 
\cite{mw}, and M\"uller and Schneider 
\cite{ms} has shown, the following definition 
provides a reasonable generalisation of 
affine homogeneous spaces of algebraic groups
(see Section~\ref{kommutativ} below for some
discussion of the 
commutative case): 
\begin{dfn}\label{defqhs}
A \emph{quantum homogeneous space} 
is a right faithfully flat 
ring extension $B \subset A$ where 
$A=(A,\mu,\eta,\Delta,\varepsilon,S)$ 
is a Hopf algebra with bijective antipode 
$S$ over a field $k$ and $B$ is 
a right coideal subalgebra, $\Delta(B)
 \subset B \otimes A$. 
\end{dfn}

Our aim here
is to generalise
a theorem by Brown and Zhang \cite{bz} from Hopf
algebras to such subalgebras. For its
statement we adopt the following 
terminology 
(see Section~\ref{kommutativ}
for background information and motivation): 

\begin{dfn}\label{glaette}
Let $k$ be  field and $B$ be a (unital, associative) $k$-algebra. 
\begin{enumerate}
\item The \emph{dimension}  $\mathrm{dim} (B)$ of $B$ 
is its projective dimension 
in the category of finitely generated 
$B$-bimodules. $B$ is \emph{smooth} if $\mathrm{dim} (B)<\infty$.
%\item The algebra $B$ is smooth if $\mathrm{dim} (B)<\infty$.
\item A character $ \varepsilon : B \rightarrow k$ is
\emph{Cohen-Macaulay} if for the induced left $B$-module structure
on $k$ and some $d \ge 0$ one has $\mathrm{Ext}^n_B(k,B) = 0$ 
for $n \neq d$, and
\emph{Gorenstein} if in addition 
$\mathrm{Ext}^d_B(k,B) \simeq k$
as $k$-modules.
\end{enumerate} 
\end{dfn} 

Under these conditions we can deduce
a Poincar\'e-type duality in the Hochschild
(co)homology of $B$ as studied by Van den 
Bergh in \cite{vdb}: 

\begin{thm}\label{main}
If $B \subset A$ is a 
smooth quantum homogeneous space and
the restriction of 
$\varepsilon$ to $B$ is Cohen-Macaulay,
then there are isomorphisms
\begin{equation}\label{pd}
		  \mathrm{Ext}^n_{B^e}(B,\,\cdot\,) \simeq 
			\mathrm{Tor}^{B^e}_{\mathrm{dim}(B)-n}(\omega \otimes_B \cdot\,,B),\quad
			\omega := \mathrm{Ext}_{B^e}^{\mathrm{dim}(B)}
			(B,B^e)
\end{equation} 
of functors on the category of
$B$-bimodules that are right flat.
Here $\otimes:=\otimes_k$, $B^e:=B \otimes B^\mathrm{op}$,
we identify left and right $B^e$-modules
and $B$-bimodules, and 
the $B$-bimodule structure on
$\omega$ is induced by right
 multiplication in $B^e$. 
\end{thm}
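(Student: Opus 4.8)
The plan is to obtain $(\ref{pd})$ as an instance of the Van den Bergh duality machinery of \cite{vdb}, in the functorial form that requires the dualizing object to be concentrated in a single cohomological degree but \emph{not} to be invertible --- which is exactly why the statement is restricted to right flat coefficients. Since $B$ is smooth it admits a finite resolution $P_\bullet\to B$ by finitely generated projective $B^e$-modules, so $B$ is a perfect object in the derived category of $B^e$-modules, and the evaluation map
\[
\omega^\bullet\otimes^{\mathbf L}_{B^e}M\ \longrightarrow\ \mathrm{RHom}_{B^e}(B,M),\qquad
\omega^\bullet:=\mathrm{RHom}_{B^e}(B,B^e)\ \simeq\ \mathrm{Hom}_{B^e}(P_\bullet,B^e),
\]
is an isomorphism for every $B$-bimodule $M$, with $\omega^\bullet$ a bounded complex of finitely generated projective right $B^e$-modules. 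Granting that the cohomology of $\omega^\bullet$ is concentrated in degree $\mathrm{dim}(B)$, one gets $\omega^\bullet\simeq\omega[-\mathrm{dim}(B)]$ with $\omega=\mathrm{Ext}^{\mathrm{dim}(B)}_{B^e}(B,B^e)$ carrying precisely the bimodule structure induced from right multiplication in $B^e$; rewriting $\omega[-\mathrm{dim}(B)]\otimes^{\mathbf L}_{B^e}M$ by means of the identity $\mathrm{Tor}^{B^e}_\bullet(\omega,M)\simeq\mathrm{Tor}^{B^e}_\bullet(\omega\otimes_BM,B)$ then produces $(\ref{pd})$. Right flatness of $M$ is used exactly here: it makes $\omega\otimes_BM$ compute the derived tensor product $\omega\otimes^{\mathbf L}_BM$, so the identity needs no correction by higher $\mathrm{Tor}$ over $B$ and holds as stated.

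Everything thus reduces to the vanishing of $\mathrm{Ext}^n_{B^e}(B,B^e)$ outside a single degree, and this is where the coideal subalgebra structure enters, following the Hopf algebra argument of \cite{bz}. For a Hopf algebra the homomorphism $a\mapsto a_{(1)}\otimes S(a_{(2)})$, $A\to A\otimes A^{\mathrm{op}}$, presents the bimodule $A$ as induced from the left module $k$ and reduces its bimodule $\mathrm{Ext}$ to one-sided $\mathrm{Ext}$ of $k$; for a merely right coideal subalgebra $b_{(2)}\notin S(B)$ in general, so no such homomorphism into $B^e$ exists. What does exist is the algebra homomorphism
\[
\delta\colon B\longrightarrow B\otimes A^{\mathrm{op}},\qquad b\longmapsto b_{(1)}\otimes S(b_{(2)}),
\]
multiplicative precisely because $B$ is a right coideal subalgebra and $S$ is bijective. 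Using that $B\subset A$ is faithfully flat on both sides (the left flatness following from the right by Masuoka and Wigner \cite{mw}) one checks that $B\otimes A^{\mathrm{op}}$ is flat as a right $B$-module along $\delta$, that the multiplication map induces an isomorphism $(B\otimes A^{\mathrm{op}})\otimes_Bk\cong A$ of left $B\otimes A^{\mathrm{op}}$-modules, and --- by the structure theorem for relative Hopf modules of Takeuchi \cite{takeuchi} and M\"uller and Schneider \cite{ms} --- that $B\otimes A^{\mathrm{op}}$, restricted along $\delta$, is free as a left $B$-module. Tensoring a finite projective bimodule resolution of $B$ by $k$ over $B$ yields a finite resolution of $k$ by finitely generated projective left $B$-modules (in particular $\mathrm{projdim}_Bk\le\mathrm{dim}(B)$); inducing it along $\delta$ gives a finite finitely generated projective resolution of $A$ over $B\otimes A^{\mathrm{op}}$, and applying $\mathrm{Hom}_{B\otimes A^{\mathrm{op}}}(-,B\otimes A^{\mathrm{op}})$ identifies $\mathrm{Ext}^n_{B\otimes A^{\mathrm{op}}}(A,B\otimes A^{\mathrm{op}})$ with $\mathrm{Ext}^n_B(k,B)\otimes V$ for a nonzero $k$-vector space $V$. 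The Cohen--Macaulay hypothesis on $\varepsilon|_B$ makes this nonzero for exactly one $n$; once the concentration is transferred to $\mathrm{Ext}^\bullet_{B^e}(B,B^e)$ it follows that this common degree equals $\mathrm{dim}(B)$.

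That transfer --- from the $B\otimes A^{\mathrm{op}}$-cohomology of $A$ to the $B^e$-cohomology of $B$ --- is the step I expect to be the main obstacle. I would carry it out by faithfully flat descent along $B^e\hookrightarrow B\otimes A^{\mathrm{op}}$ (together with $B^e\hookrightarrow A\otimes B^{\mathrm{op}}$ and $B^e\hookrightarrow A^e$ as needed), all faithfully flat because $A$ is faithfully flat over $B$ on both sides: as $B$ is finitely presented over $B^e$, the groups $\mathrm{Ext}^\bullet_{B^e}(B,B^e)$ are faithfully detected after such an extension, where they can be matched with the $\mathrm{Ext}$ of the induced module $A$ computed above. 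The delicate point, and the reason this is not a verbatim transcription of \cite{bz}, is exactly the one-sidedness of the coaction: $\delta$ takes values in $B\otimes A^{\mathrm{op}}$ rather than in $B^e$, so one must check that the descent genuinely recovers $\mathrm{Ext}^\bullet_{B^e}(B,B^e)$ (and not, say, the Hochschild cohomology of $A$ with coefficients in $A\otimes_BA$), and that all the finiteness invoked in the process comes from faithful flatness and finite generation rather than from a Noetherian hypothesis --- which, unlike in \cite{bz}, is not at our disposal here.
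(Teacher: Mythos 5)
Your first paragraph is essentially the paper's use of Van den Bergh's theorem (Theorem~\ref{michel}): everything indeed reduces to showing that $\mathrm{Ext}^n_{B^e}(B,B^e)$ vanishes outside one degree. The gap is in the second half, and it sits exactly where you yourself locate "the main obstacle": the passage from $\mathrm{Ext}^\bullet_{B\otimes A^{\mathrm{op}}}(A,\,\cdot\,)$ back to $\mathrm{Ext}^\bullet_{B^e}(B,B^e)$ is never carried out, and the route you sketch cannot be carried out under the stated hypotheses. To identify $\mathrm{Ext}^\bullet_{B\otimes A^{\mathrm{op}}}(A,M)$ with $H^\bullet(B,M)=\mathrm{Ext}^\bullet_{B^e}(B,M)$ (the second change of rings, along the embedding $B^e\hookrightarrow B\otimes A^{\mathrm{op}}$) one needs $B\otimes A^{\mathrm{op}}$ to be flat as a right $B^e$-module, i.e.\ $A$ to be flat as a \emph{left} $B$-module. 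Your parenthetical claim that left faithful flatness "follows from the right by Masuoka and Wigner" is not available here: their result covers the commutative case, M\"uller--Schneider give the implication only when $AB^+=B^+A$, and Theorem~\ref{main} assumes neither; whether right faithful flatness implies left faithful flatness in general is posed as an open question in the paper. So your argument, as written, proves the theorem only under an extra hypothesis (left flatness, or $AB^+=B^+A$), not in the stated generality. The same issue infects the "faithfully flat descent" along $B^e\hookrightarrow B\otimes A^{\mathrm{op}}$: base change in the useful direction (tensoring $\mathrm{Hom}_{B^e}(P_\bullet,B^e)$ with $B\otimes A^{\mathrm{op}}$) only needs right flatness of $A$, but then what you compute is $\mathrm{Ext}^\bullet_{B^e}(B,B\otimes A)$, not the $\mathrm{Ext}$ over $B\otimes A^{\mathrm{op}}$ of the induced module $A$ that your change of rings along $\delta$ produces; matching the two is again the left-flatness problem.

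The paper resolves precisely this difficulty by different, flatness-free means, and this is the idea missing from your proposal. First, the identification $H^\bullet(B,M)\simeq\mathrm{Ext}^\bullet_B(k,\mathrm{ad}(M))$ is proved by an explicit cochain-level isomorphism $\xi$ (Lemma~\ref{gklem}), deliberately avoiding any change-of-rings and hence any flatness hypothesis; an explicit untwisting isomorphism $\rho$ (Lemma~\ref{schgr}) and flat base change in the $A$-variable only (Lemma~\ref{blemma}, which uses right flatness of $A$ and the finitely generated projective resolution of the $B$-module $k$ supplied by smoothness via Lemma~\ref{wudi}) then give $H^\bullet(B,B\otimes A)\simeq\mathrm{Ext}^\bullet_B(k,B)\otimes A$, concentrated in one degree by the Cohen--Macaulay assumption. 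Second --- and this is the step that replaces your descent --- Theorem~\ref{dirsum}(2) provides a right $B$-module decomposition $A=B\oplus B^\perp$, whence $B\otimes A\simeq B^e\oplus(B\otimes B^\perp)$ as $B$-bimodules; therefore $H^n(B,B^e)$ is a direct summand of $H^n(B,B\otimes A)$ and inherits the vanishing for $n\neq\mathrm{dim}(B)$ with no descent, no left flatness and no Noetherian input. If you want to salvage your approach, you must either prove left flatness of $A$ over $B$ in this generality (open) or restructure the argument around such a direct-summand mechanism.
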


If $B=A$ and $\varepsilon$ is
Gorenstein, then Brown and Zhang's
result also says that  
$\omega \simeq A_\sigma$ for 
some $\sigma \in \mathrm{Aut}(A)$ \cite{bz}, by
which we mean it is isomorphic to $A$ as left
module but the right action is given by 
$a \blacktriangleleft b:=a \sigma
(b)$. In particular, $\omega$ is an invertible bimodule with inverse
$\omega^{-1} \simeq A_{\sigma^{-1}}$, so the duality
(\ref{pd}) can be reversed
to
\begin{equation}\label{pdinv}
		  \mathrm{Tor}^{B^e}_n(\,\cdot\,,B) \simeq 
			\mathrm{Ext}_{B^e}^{\mathrm{dim}(B)-n}(B,\omega^{-1}
			\otimes_B \cdot\,),\quad
			\omega \otimes_B \omega^{-1}
			\simeq \omega^{-1} \otimes_B
			\omega \simeq B,
\end{equation} 
and this holds in fact on the category
of all $B$-bimodules (the flatness
assumption 
becomes obsolete), see \cite{vdb}. Then
the duality is not only of theoretical
interest but a valuable
tool when explicitly computing  
the Hochschild cohomology of $B$, see \cite{arabian}
for a concrete demonstration.

Algebraic geometry suggests that the
Gorenstein condition implies the invertibility of $\omega$
in greater generality: we will show in
Theorem~\ref{schmaerz} that $\omega$
carries in the Gorenstein case the
structure of a $(B,A)$-Hopf
bimodule. These are noncommutative
generalisations of the modules of
sections of homogeneous 
vector bundles, and 
$\mathrm{Ext}^{\mathrm{dim}
(B)}_B(k,B)$ reduces for commutative
rings to the typical fibre. So the Gorenstein
condition means here that we are dealing with a
line bundle whose module of sections is 
invertible.

We will recall that any quantum
homogeneous space can be written as 
\begin{equation}\label{edwin}
		  B =\{a \in A\,|\,(\pi \otimes
\mathrm{id}_A) \circ \Delta(a)=\pi (1)
\otimes a\},
\end{equation} 
where $\Delta$ is the coproduct in $A$ and
$\pi$ is the canonical projection 
onto $A/B^+A$, 
$B^+:=B \cap \mathrm{ker}\,
\varepsilon$, see Section~\ref{qhs}. 
This is a Hopf algebra map 
if and only if 
$A B^+=B^+ A$ (since
$B^+ A=S(A B^+)$ as observed by Koppinen,
see \cite{ms}, Lemma~1.4).
Our second main result applies to this case:
   
\begin{thm}\label{mainneu}
If $B \subset A$ is a smooth and
 Gorenstein quantum
 homogeneous space with $AB^+=B^+A$,
then $\mathrm{Ext}^{\mathrm{dim}
 (B)}_{B^e}(B,B^e)$ is
an invertible $B$-bimodule.
\end{thm}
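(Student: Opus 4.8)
My plan is to use Theorem~\ref{schmaerz} to view $\omega=\mathrm{Ext}^{\mathrm{dim}(B)}_{B^e}(B,B^e)$ as a Hopf bimodule, compute its ``fibre at the base point'' using the Gorenstein hypothesis, and then conclude via the structure theory of Hopf bimodules over a faithfully flat Hopf--Galois extension. First, write $d:=\mathrm{dim}(B)$ and $H:=A/B^+A$. Because $AB^+=B^+A$, the projection $\pi\colon A\to H$ is a morphism of Hopf algebras, so $A$ becomes an $H$-comodule algebra and, by (\ref{edwin}), $B$ is its subalgebra of coinvariants; combined with the faithful flatness in Definition~\ref{defqhs} this presents $B\subset A$ as a faithfully flat Hopf--Galois extension. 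By Theorem~\ref{schmaerz}, $\omega$ (which is finitely generated, being a subquotient of finitely generated projective $B^e$-modules) moreover carries the structure of a $(B,A)$-Hopf bimodule, so it lives in the monoidal category $\M$ of $(B,A)$-Hopf bimodules --- with tensor product $\otimes_B$ equipped with the diagonal $A$-coaction and unit object $B$ --- and it suffices to show $\omega$ is invertible there.

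Next I would compute the fibre $\omega\otimes_B k$, where $k=k_\varepsilon$ is the left $B$-module defined by the restricted character. Using that $B$ is smooth, pick a finite resolution $P_\bullet\to B$ by finitely generated projective $B^e$-modules; then $P_\bullet\otimes_B k$ is a projective resolution of $k$ over $B$, and since $\mathrm{Hom}_{B^e}(\,\cdot\,,B^e)\otimes_B k\cong\mathrm{Hom}_B(\,\cdot\otimes_B k,B)$ on finitely generated projectives, dualising $P_\bullet$ into $B^e$ and tensoring with $k$ computes $\mathrm{Ext}^\bullet_B(k,B)$. On the other hand the Cohen--Macaulay hypothesis concentrates $\mathrm{Ext}^\bullet_{B^e}(B,B^e)$ in degree $d$ (as in the proof of Theorem~\ref{main}), so the dualised complex is a right-$B$-flat resolution of $\omega[-d]$ and degree-$d$ cohomology after $-\otimes_B k$ returns $\omega\otimes_B k$. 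Hence $\omega\otimes_B k\cong\mathrm{Ext}^{d}_B(k,B)$, which is one-dimensional over $k$ by the Gorenstein hypothesis.

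Finally I would invoke the structure theory of $\M$: for a faithfully flat Hopf--Galois extension the fibre functor $\M\to\mathcal{C}$, $M\mapsto M\otimes_B k$, is a monoidal equivalence onto a category $\mathcal{C}$ of $H$-comodules --- in the commutative case the rational representations of the ``stabiliser'', in general a suitable category of $H$-comodules with compatible structure --- with quasi-inverse the induction functor $V\mapsto A\,\square_H\,V$ carrying its canonical Hopf-bimodule structure, the noncommutative analogue of forming the homogeneous vector bundle attached to a representation. Under it $\omega$ corresponds to a one-dimensional object, whose coaction is implemented by a grouplike $g\in H$; since $H$ is a Hopf algebra, $g$ is invertible with inverse $S(g)$, so that object is invertible in $\mathcal{C}$ (with coaction by $S(g)$), whence $\omega$ is invertible in $\M$ and a fortiori an invertible $B$-bimodule. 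The hard part will be exactly this last ingredient --- proving that the fibre functor on $(B,A)$-Hopf bimodules is a monoidal equivalence in this generality, with only faithful flatness at hand and $B$ merely a right coideal subalgebra, and in particular establishing monoidality and identifying $\mathcal{C}$ --- for which one would build on, and in part re-prove, work of Takeuchi, Schneider, Schauenburg and Masuoka.
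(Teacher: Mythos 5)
Your first two steps are sound and agree in substance with the paper: Theorem~\ref{schmaerz} does make $\omega$ a Hopf bimodule, and its ``fibre'' is indeed $\mathrm{Ext}^{\mathrm{dim}(B)}_B(k,B)\simeq k$ with coaction given by a group-like $g\in C=A/B^+A$. Also, the step you flag as the hard one is not where the difficulty lies: since $AB^+=B^+A$ makes $B\subset A$ a faithfully flat Hopf--Galois extension, the monoidal equivalence ${}_B\M^A_B\simeq \M^C$ with quasi-inverse $N\mapsto N\Box_C A$ is available (it is Theorem~\ref{dirsum}(3) together with (\ref{wuetend})), and the paper uses exactly this in Corollary~\ref{qaz} to produce $\bar\omega$ with $\bar\omega\otimes_B\omega\simeq B_\sigma$.

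The genuine gap is that $\omega$ is \emph{not} an object of the untwisted monoidal category you work in. By Theorem~\ref{schmaerz} it lies in ${}_B\M^A_{B,S^2}$: its right $B$-action differs from the canonical one (\ref{assbim}) on $N\Box_C A$ by the algebra map $\sigma(b)=\chi(b_{(1)})S^2(b_{(2)})$, and the $S^2$-twisted compatibility (\ref{britzel}) is incompatible with the diagonal coaction on $\otimes_B$-products (the twisted category is not monoidal with unit $B$). Consequently your fibre-functor argument proves invertibility of the untwisted Hopf bimodule $N\Box_C A$, i.e.\ of $\omega$ with its right action replaced by the canonical one; the actual $\omega$ is its twist by $\sigma$, and a twist $M_\sigma$ of an invertible bimodule is invertible precisely when $\sigma$ is an automorphism of $B$. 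A priori $\sigma$ is only an algebra map $B\to A$, so your argument is missing exactly the content of Corollary~\ref{qaz} and Section~\ref{director}: that $\sigma(B)\subset B$ (which uses (\ref{knights}) and $AB^+=B^+A$), that $\sigma$ is injective (via the functional $\gamma$ of Theorem~\ref{transes}), and that $\sigma(B)=B$ (via $S^2(B)=B$ from Lemma~\ref{berggg} and a finite-dimensionality argument on coproduct legs). Without this, the conclusion that $\omega$ itself is an invertible $B$-bimodule does not follow; with it, your route essentially collapses into the paper's proof.
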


The condition $AB^+=B^+A$ holds
trivially if $A$ is  
the commutative coordinate ring of an algebraic group
$G$. Then $B$ is the coordinate
ring $k[X]$ of an affine homogeneous space of $G$,
and $A/B^+A$ is the coordinate ring $k[H]$ 
of the isotropy group $H \subset G$ of 
$X \simeq H \setminus \!G$, see Section~\ref{kommutativ}. 
Important noncommutative examples with $AB^+=B^+A$
are quantisations of quotients $H \setminus\! G$ of
a Poisson group by a Poisson subgroup,
such as the standard quantisations
of the generalised flag manifolds studied
e.g.~in~\cite{chpr,kolb1,kolb2,kolb3,ulid,stokman}.

There are, however, plenty examples of 
quantum homogeneous spaces 
with $AB^+\neq B^+A$ such as the
nonstandard Podle\'s spheres
\cite{Po,ms} and more generally
quantisations of quotients
of Poisson groups by coisotropic
subgroups. We use the antipode of
$A/B^+A$ explicitly when
constructing $\omega^{-1}$ but 
we are not aware of a 
counterexample
to Theorem~\ref{mainneu} with the
assumption $AB^+=B^+A$ removed, and we
expect its conclusion holds for the
nonstandard Podle\'s spheres. Hence 
we ask: 
\begin{question}
Is $\omega$ invertible for all smooth
quantum homogeneous
spaces when $\varepsilon$ is Gorenstein?
\end{question}

\subsection{Application}
Our main motivation is to apply our
results to the paradigmatic example of a quantum homogeneous space 
which is Podle\'s' standard quantum sphere \cite{Po}. Here
$A$ is the quantised coordinate ring
$\mathbb{C}_q[SL(2)]$, and $A/B^+A
\simeq \mathbb{C}[z,z^{-1}]$. The quotient
$\pi$ 
deforms the map dual to the embedding of a 
maximal torus $T \simeq \mathbb{C}^*$ into 
$SL(2,\mathbb{C})$, so $B$
deforms  the coordinate ring of 
the coset space $T \setminus \!
SL(2,\mathbb{C})$ which is isomorphic to
the complexified 2-sphere given in $ \mathbb{C}^3$ by
$x^2+y^2+z^2=1$.   
We will prove that $B$ 
satisfies all the homological 
assumptions of Theorem~\ref{mainneu} and
compute $\omega$:
\begin{thm}\label{appli}
Let $q \in \mathbb{C}^*$ 
be not a root of unity and 
$A$ be the quantised
coordinate ring of $SL(2,\mathbb{C})$.
Then the standard 
Podle\'s quantum 2-sphere $B \subset A$ 
is smooth with
$ \mathrm{dim} (B)=2$, 
$ \varepsilon|_B$ 
is Gorenstein, and we have
$ \omega \simeq B_\sigma $,
where $\sigma$ is the restriction of
the square $S^2$ of the antipode of $A$ to $B$.
\end{thm}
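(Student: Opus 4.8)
First I would fix an explicit presentation of $B$ inside $A=\mathbb{C}_q[SL(2)]$. Via the description~(\ref{edwin}), $B$ is the degree-zero part of $A$ for the $\mathbb{Z}$-grading dual to the character $\pi\colon A\to\mathbb{C}[z,z^{-1}]$, so it is generated by three quadratic elements $x_{-1},x_0,x_1$ subject to relations quantising those that cut out the complexified $2$-sphere $x^2+y^2+z^2=1$ in $\mathbb{C}^3$. Equivalently, after a change of generators $B$ is a rank-one generalised Weyl algebra over a polynomial ring $\mathbb{C}[C]$ in a central element $C$; from either description $B$ has a PBW-type $k$-basis and is a Noetherian domain of Gelfand-Kirillov dimension $2$, and the faithful flatness of $A$ over $B$ built into Definition~\ref{defqhs} is available throughout.

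Next I would treat smoothness and the value of $\mathrm{dim}(B)$ together. From the defining relations I would write down a finite free $B$-bimodule resolution $0\to P_2\to P_1\to P_0\to B\to 0$ of length two --- a perturbed Koszul-type resolution of the kind carried by smooth affine hypersurfaces and their noncommutative analogues --- and verify its exactness against the PBW basis, giving $\mathrm{dim}(B)\le 2$. Applying $B\otimes_B(-)$ converts it into a length-two free resolution of the trivial module $k=B/B^+$, so $\mathrm{pd}_B k\le 2$ and hence $\mathrm{gl.dim}(B)\le\mathrm{dim}(B)\le 2$. Computing $\mathrm{Ext}^\bullet_B(k,B)$ from this resolution I expect to find it concentrated in cohomological degree $2$ and one-dimensional there; in particular $\mathrm{Ext}^2_B(k,B)\ne 0$, so $\mathrm{pd}_B k=2$ and $\mathrm{dim}(B)\ge 2$. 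Hence $\mathrm{dim}(B)=2$, the algebra $B$ is smooth, and the same computation shows that $\varepsilon|_B$ is Cohen-Macaulay with $d=2$ and Gorenstein.

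It then remains to identify $\omega=\mathrm{Ext}^2_{B^e}(B,B^e)$. Because $\pi$ is a Hopf algebra map onto $\mathbb{C}[z,z^{-1}]$ we have $AB^+=B^+A$, so Theorems~\ref{main} and~\ref{mainneu} already apply and give that $\omega$ is an invertible $B$-bimodule, hence isomorphic to $B_\sigma$ for a unique $\sigma\in\mathrm{Aut}(B)$. To pin down $\sigma$ I would compute $\omega$ by hand: apply $\mathrm{Hom}_{B^e}(-,B^e)$ to the resolution of the previous step and read off the $B$-bimodule structure on the top cohomology coming from right multiplication in $B^e$, as in Theorem~\ref{main}. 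The twist is then governed by the automorphism appearing as the multiplier in the last differential, and the claim is that this is the restriction to $B$ of the square $S^2$ of the antipode of $A$; that $S^2$ restricts to an automorphism of $B$ is clear since $S^2$ is a bialgebra automorphism of $A$ fixing the grading, and on the generators of $A$, hence on $x_{-1},x_0,x_1$, it acts by explicit powers of $q$. This is consistent with Brown and Zhang's description \cite{bz} of $\omega$ for the Hopf algebra $A$ itself and, through Theorem~\ref{schmaerz}, with computing the ``typical fibre'' $\mathrm{Ext}^2_B(k,B)$ as a one-dimensional $\mathbb{C}[z,z^{-1}]$-comodule; as a final cross-check, the duality~(\ref{pd}) with $\omega\simeq B_\sigma$ must reproduce the $\sigma$-twisted Hochschild homology of the standard Podle\'s sphere in top degree, where the untwisted homology exhibits the usual dimension drop.

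The main obstacle is concentrated in this last part: writing down the length-two bimodule resolution and proving its exactness is a genuine, if essentially routine, PBW computation, made fiddly by the inhomogeneous term in the sphere relation; and tracking the right action through the dualisation carefully enough that the automorphism one extracts is \emph{literally} $S^2|_B$ --- rather than merely conjugate to it, or equal to it only up to an inner twist --- requires matching the antipode conventions of $A$ against the $q$-powers and signs in the differentials. Everything else is formal once the resolution is in place.
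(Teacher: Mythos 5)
Your plan hinges on two computations that are asserted rather than carried out, and one of the bridging inferences is actually false. The central gap is the ``perturbed Koszul-type'' length-two \emph{free} $B^e$-resolution of $B$: nothing in the presentation (three generators, four relations) guarantees that the second syzygy is free, and already for the classical quadric $x^2+y^2+z^2=1$ the natural length-two projective bimodule resolution has a non-free top term (the Picard group of this surface, and of its quantisation, is nontrivial), so such a resolution cannot simply be written down and checked against a PBW basis; this is precisely the hard part you defer. The paper never constructs a bimodule resolution at all: smoothness comes from Corollary~\ref{wirklich}, using that $C=k[z,z^{-1}]$ is cosemisimple with $AB^+=B^+A$ and that $A^e\simeq k_q[SL(2)\times SL(2)]$ is Noetherian of global dimension $4$, while the Cohen--Macaulay/Gorenstein input is obtained from the much smaller one-sided Koszul resolution of the $B$-module $k$ (Lemma~\ref{koskom} with $z_{\pm1}=y_{\pm1}+y_0$, and Lemma~\ref{scotchpie1}, which also yields $\chi=\varepsilon$ and hence $\sigma=S^2|_B$ via (\ref{emma})); Theorem~\ref{gklem2}, the splitting $B\otimes A\simeq B^e\oplus(B\otimes B^\perp)$ and Van den Bergh's Theorem~\ref{michel} then give $\mathrm{dim}(B)=2$ and the duality. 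This is exactly the advertised advantage of Theorem~\ref{main}: one resolves $k$ over $B$, not $B$ over $B^e$.

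The second problem is the step ``$\omega$ is invertible, hence $\omega\simeq B_\sigma$ for a unique $\sigma\in\mathrm{Aut}(B)$''. Invertible $B$-bimodules form the Picard group, which for the Podle\'s sphere contains the nontrivial quantum line bundles $\omega_{n,0}$, so invertibility alone does not force $\omega$ to be free of rank one on either side. Theorem~\ref{schmaerz} only pins $\omega$ down to $\omega_{n,1}$ for some group-like $z^n$ (and here (\ref{knights}) is vacuous since $\chi=\varepsilon$), and excluding $n\neq0$ is genuine remaining work: in the paper it is done by computing $H^0(B,\omega_{i,j})$ (Lemma~\ref{wryi}) and combining it, via the invertibility of $\omega$ and (\ref{tommy1}), with Hadfield's computation $H_2(B,B_{S^2})\simeq k$. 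So the ``dimension-drop cross-check'' you mention at the end is not a consistency check but a load-bearing ingredient. Similarly, extracting that the twist is \emph{literally} $S^2|_B$ from a dualised bimodule resolution is exactly the delicate bookkeeping you flag as an obstacle; in the paper this identification is structural (Corollary~\ref{goedel} and Theorem~\ref{schmaerz}) and requires no tracking of signs or $q$-powers through a dualised complex. As it stands, then, your proposal is a plausible programme but not a proof: the resolution, its exactness, and the identification of the twist and of the group-like $z^n$ all remain to be supplied.
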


This form of $\omega$ had to be expected 
from Dolgushev's results \cite{dolgushev}
in the setting of formal deformation
quantisations, and Hadfield's computations \cite{tom},
since $S^2|_B$ quantises the flow of the 
modular vector field of the quantised 
Poisson structure on the 2-sphere, and also
coincides with the modular automorphism of the Haar functional of 
$A$, see Section~\ref{stpodl} for
further details.

As we mentioned above, the standard quantum 2-sphere can 
be further deformed 
to quantum homogeneous spaces of $\mathbb{C}_q[SL(2)]$
where $A B^+ \neq B^+A$ \cite{Po,ms}. 
The Gorenstein condition is checked for 
these in the same way as for the standard 
sphere. It was shown in 
\cite{bavula} that their global dimension
is 2, but the methods used there 
seem not to allow us to answer
\begin{question}
Are the nonstandard 
Podle\'s spheres smooth? 
\end{question}       

\subsection{The case of coordinate rings}\label{kommutativ}  
For the reader's 
convenience we briefly recall here 
the geometric 
background of the theory in the case that
$B \subset A$ are coordinate rings 
of affine varieties over 
an algebraically closed field.

A Hopf
algebra structure on the coordinate ring 
$A=k[G]$ of an affine variety $G$  
corresponds directly 
to an algebraic group 
structure on $G$. 
Furthermore, a faithfully flat 
embedding $B=k[X] \subset A$
corresponds to a surjection 
$G \rightarrow X$ (\cite{matsumura2}, 
Theorem~7.3 on p.~48). Since
$ \Delta (B) \subset 
B \otimes A \simeq k[X \times G]$, 
$ \Delta $ defines 
an algebraic action 
$X \times G \rightarrow X$ of $G$ on $X$ for 
which the quotient map $G \rightarrow X$
is equivariant. Hence $X$ is indeed a 
homogeneous space of $G$, that is, the action is
transitive and $X \simeq H \setminus \! G$
for a closed subgroup $H \subset G$.

Recall next that a variety $X$ is smooth in
a point if and only if its local ring in the point has
finite global dimension which is then 
equal to $ \mathrm{dim}(X)$
(\cite{matsumura2}, 
Theorem~19.2 on p.~156). 
Since $ \mathrm{Ext} $ is compatible with 
localisations in the sense that 
for all maximal ideals 
$\mathfrak{m}$ in a 
commutative Noetherian ring $B$ and 
all finitely generated modules 
$M,N$ over $B$ one has
(\cite{weibel},~Proposition~3.3.10) 
\begin{equation}\label{lokalis}
(\mathrm{Ext}^\bullet_B(M,N))_\mathfrak{m} \simeq 
 \mathrm{Ext}^\bullet_B(M,N) \otimes_B B_\mathfrak{m}
\simeq
\mathrm{Ext}^\bullet_{B_\mathfrak{m}}(M_\mathfrak{m},
N_\mathfrak{m}),
\end{equation}  
$X$ is smooth in all points 
if and only if $ \mathrm{gl.dim}
(k[X])<\infty$.

One has in general 
$\mathrm{gl.dim}(B) 
\le \mathrm{dim} (B)$
(see Lemma~\ref{wudi} in Section~\ref{mpro}), 
so the smoothness from 
Definition~\ref{glaette} implies for 
$B=k[X]$ that $X$ is smooth in all points. It can happen that 
$ \mathrm{dim} (B)=\infty$ even when 
$ \mathrm{gl.dim}(B)=0 $ (consider 
e.g.~$B=\mathbb{C}$ over $k=\mathbb{Q}$), 
but for $k=\bar k$, $k[X]$-bimodules are the same
as modules over
$k[X] \otimes k[X] \simeq k[X \times X]$, and this has finite
global dimension if $k[X]$ has  
(\cite{hkr}, Theorem~2.1) and is 
Noetherian. Hence the finitely generated 
$k[X] \otimes k[X]$-module
$k[X]$ admits a finitely 
generated projective resolution of
finite length and
$ \mathrm{dim} (k[X])<\infty$. Thus  
smoothness as in Definition~\ref{glaette} is 
really equivalent to geometric smoothness of $X$.

For a classical homogeneous 
space the smoothness
condition in Theorem~\ref{main} 
becomes in fact 
void in characteristic zero: 
Corollary~\ref{moninervt} below tells that 
an affine homogeneous 
space $X \simeq H \setminus \! G$ 
is smooth if $G$ is so, and 
affine algebraic groups
are smooth in characteristic zero, see  
\cite{waterhouse} Sections~11.6 and 11.7.

Similarly, a smooth character of a coordinate
ring is Gorenstein since this is for these equivalent to the finiteness of the
injective dimension of the corresponding
local ring as a module over itself
(\cite{matsumura2}, Theorem~18.1 on p.~141
in combination with (\ref{lokalis})). In
the noncommutative case this equivalence
breaks down which results in 
various nonequivalent generalisations of
the Gorenstein and similarly the
Cohen-Macaulay condition. The ones from
Definition~\ref{glaette} are closest in
spirit to the notions of AS Gorenstein and
AS Cohen-Macaulay rings \cite{jz}
but still more naive
and just meant as a working terminology
to be used within this paper.

Lastly we remark that the coordinate ring of any smooth affine variety
satisfies the duality from Theorem~\ref{main}
with $\omega$
being the inverse of the module of 
top degree K\"ahler differentials (algebraic
differential forms), see e.g.~\cite{uli}. 

\subsection{Structure of the paper}
Theorems~\ref{main} and \ref{mainneu}
are proved in Section~\ref{teoria}.
Sections~\ref{hohohi}-\ref{qhs}
recall background material on Hochschild
(co)homology and quantum homogeneous
spaces, mainly from \cite{nu,vdb} and \cite{mw,ms,takeuchi}. 
Section~\ref{ginseng} extends the
description of the Hochschild cohomology
of a Hopf algebra $A$ as a derived functor
over $A$ rather than $A^e$ to quantum
homogeneous spaces. Using this 
we prove Theorem~\ref{main} in 
Section~\ref{mpro}.

In
Section~\ref{omestru} we give 
$\omega=\mathrm{Ext}^{\mathrm{dim}(B)}_{B^e}(B,B^e)$ for
smooth and Gorenstein quantum
homogeneous spaces $B \subset A$  
the structure of a $(B,A)$-Hopf
bimodule and deduce that it is as a
left $B$-module isomorphic to
$$
		  \{a \in A\,|\,(\pi
		  \otimes \mathrm{id}_A) \circ
		  \Delta (a) = g \otimes a\}
$$ 
for some group-like element $g \in
 C=A/B^+A$. Using this we construct in
 Section~\ref{hoga} under the assumption $A
 B^+=B^+A$ a $B$-bimodule $\bar \omega$ with $\bar \omega
\otimes_B \omega  \simeq B_\sigma$ for
some algebra endomorphism $\sigma$ of
$B$. Section~\ref{chrks} discusses a
generalisation of the 
transitive action of a group $G$ on $X=H \setminus\!G$ to
characters on quantum homogeneous spaces. This is used to show in
Section~\ref{director} that  
$\sigma$ is an automorphism which 
implies Theorem~\ref{mainneu}.

A short Section~\ref{criterium} 
contains a criterion to prove the
smoothness of some quantum homogeneous spaces
which is applied later in the proof of Theorem~\ref{appli}, and Section~\ref{countex} 
gives three examples of quantum homogeneous 
spaces that illustrate certain aspects of the
general theory developed so far.

Section~\ref{applikazie} is devoted to
the Podle\'s sphere and the proof of Theorem~\ref{appli}. 

\subsection*{Acknowledgements}
I am deeply indebted to the referee who
pointed out that an originally submitted
version of the paper was
incorrect and also has made other
suggestions for improving the paper. 
Equally warm thanks go to Ken Brown, 
Tomasz Brzezi\'nski and Stefan Kolb,
to my EPSRC
fellowship EP/E/043267/1 and the
Polish Government Grant N201 1770 33. 

\section{Theory}\label{teoria} 
\subsection{Hochschild (co)homology}\label{hohohi}
Let $k$ be a field. For a $k$-algebra $B$, we denote by 
$B^\mathrm{op}$ 
the opposite algebra (same vector space, opposite 
multiplication) and by $B^e:=B \otimes B^\mathrm{op}$
the enveloping algebra of $B$ (here and
in the rest of the paper, an unadorned
$\otimes$ denotes the tensor product
over $k$). The tensor
flip $ \tau (a \otimes b):=b \otimes a$ 
defines a canonical isomorphism 
$(B^e)^\mathrm{op}\simeq B^e$ and hence 
identifies left and
right $B^e$-modules, and these are also the same
as $B$-bimodules (with symmetric action of $k$).
For any such bimodule $M$,
the Hochschild (co)homology 
of $B$ with coefficients in $M$ is 
$$
		  H_\bullet(B,M):=\mathrm{Tor}_\bullet^{B^e}(M,B),\quad
		  H^\bullet(B,M):=\mathrm{Ext}^\bullet_{B^e}(B,M),
$$
where $B$ is considered as a $B$-bimodule using
multiplication in $B$.

The bar
resolution of $B$ yields canonical
(co)chain complexes computing 
$H_\bullet(B,M)$ and $H^\bullet(B,M)$. For 
cohomology, this cochain complex is 
$$
		  C^\bullet(B,M):=\mathrm{Hom}_k(B^{\otimes
		  \bullet},M)
$$ 
with the coboundary
operator $\bb : C^n(B,M)
\rightarrow C^{n+1}(B,M)$ given by
\begin{align}\label{sputzens}
\bb \varphi (b^1,\ldots,b^{n+1}) 
&= b^1 \varphi(b^2,\ldots,b^{n+1})
\nonumber\\ 
 &\quad+\sum_{i=1}^{n} (-1)^i 
		  \varphi (b^1,\ldots,b^ib^{i+1},\ldots,b^{n+1})\\ 
 &\quad+(-1)^{n+1} \varphi (b^1,\ldots,b^n)
		  b^{n+1}.
		  \nonumber
\end{align}

%Recall finally that if 
%$B=M$ is the coordinate ring
%$k[X]$ of a smooth affine variety $X$, then 
%$H_\bullet(B,B)$ and $H^\bullet(B,B)$ can be
%canonically identified with the sets of algebraic differential
%forms and of algebraic multivector fields
%on $X$, respectively (Hochschild-Kostant-Rosenberg
%theorem \cite{hkr}, see \cite{loday} for a more general
%form). 
For further information see 
e.g.~\cite{ce,loday,weibel}. 
 
\subsection{Van den Bergh's theorem}
The following theorem was proven by Van den Bergh 
in \cite{vdb}. To be precise, Van
den Bergh considered the case in which
the bimodule $\omega$ is invertible. For
the sake of clarity we include the
sketch of a proof not using this
assumption, see \cite{nu} for details. 
\begin{thm}\label{michel}
Let $B$ be a smooth algebra and assume
there exists $d \ge 0$ such that  
$H^n(B,B^e)=0$ for $n \neq d$.
Then $d=\mathrm{dim}(B)$ 
and there is for all $n \ge 0$ and for
every right $B$-flat $B$-bimodule $M$ a
 canonical isomorphism
\begin{equation}\label{duelliti}
		  H_n(B,\omega \otimes_B M) \simeq 
		  H^{d-n}(B,M),\quad
		  \omega:=H^d(B,B^e),
\end{equation}
where the
bimodule structure of $\omega$ is 
induced by right multiplication in $B^e$.  		  
\end{thm}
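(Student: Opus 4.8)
The plan is to dualise a finite resolution of $B$ by finitely generated projective bimodules into a projective resolution of $\omega$, and then to obtain the duality by computing one dualised bar-type complex in two different ways; the flatness hypothesis will only be needed at the very last step.

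First I would use smoothness to fix a resolution $P_\bullet \to B$ of length $\mathrm{dim}(B)=:p$ by finitely generated projective $B$-bimodules. Writing $P^\vee:=\mathrm{Hom}_{B^e}(P,B^e)$ and recalling that finitely generated projective $B^e$-modules are reflexive, the cochain complex $P^\vee_\bullet$ consists again of finitely generated projective $B^e$-modules (read as $B$-bimodules via $\tau$, i.e.\ via right multiplication in $B^e$), and $H^i(P^\vee_\bullet)=\mathrm{Ext}^i_{B^e}(B,B^e)=H^i(B,B^e)$. Next I would check $d=p$: the resolution $P_\bullet$ already forces $H^i(B,B^e)=0$ for $i>p$, so by hypothesis only $H^d$ can be nonzero; and if $d<p$ then $H^p(P^\vee_\bullet)=\mathrm{coker}(P^\vee_{p-1}\to P^\vee_p)=0$, so this epimorphism of projectives splits, dualises to a split monomorphism $P_p\hookrightarrow P_{p-1}$, and cancels, producing a resolution of $B$ of length $p-1$, contradicting $p=\mathrm{dim}(B)$. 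Hence $d=\mathrm{dim}(B)$, $P^\vee_\bullet$ lives in cohomological degrees $0,\dots,d$ with its only nonzero cohomology $\omega=H^d(B,B^e)$ in the top degree, so $0\to P^\vee_0\to\cdots\to P^\vee_d\to\omega\to 0$ is exact and $Q_j:=P^\vee_{d-j}$ defines a finitely generated projective $B$-bimodule resolution $Q_\bullet\to\omega$.

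Then I would invoke the evaluation isomorphism: for a finitely generated projective $B^e$-module $P$ and any bimodule $M$ there is a natural isomorphism $P^\vee\otimes_{B^e}M\cong\mathrm{Hom}_{B^e}(P,M)$, hence a natural isomorphism of complexes $P^\vee_\bullet\otimes_{B^e}M\cong\mathrm{Hom}_{B^e}(P_\bullet,M)$. The right-hand side computes $H^\bullet(B,M)$ because $P_\bullet$ resolves $B$; the left-hand side, after the reindexing $Q_j=P^\vee_{d-j}$, is the complex computing $\mathrm{Tor}^{B^e}_\bullet(\omega,M)$ because $Q_\bullet$ is a projective resolution of $\omega$. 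Matching degrees gives a natural isomorphism $H^n(B,M)\cong\mathrm{Tor}^{B^e}_{d-n}(\omega,M)$ valid for every bimodule $M$. Finally, for $M$ right $B$-flat I would apply the standard reduction for bimodule $\mathrm{Tor}$: degreewise there is a natural isomorphism $Q_i\otimes_{B^e}M\cong(Q_i\otimes_B M)\otimes_{B^e}B$, and the flatness of $M$ over $B$ is precisely what keeps $Q_\bullet\otimes_B M$ a resolution of $\omega\otimes_B M$ by bimodules that stay acyclic for $(-)\otimes_{B^e}B$, so that $\mathrm{Tor}^{B^e}_j(\omega,M)\cong\mathrm{Tor}^{B^e}_j(\omega\otimes_B M,B)=H_j(B,\omega\otimes_B M)$. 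Chaining the two isomorphisms and renaming $n\leftrightarrow d-n$ yields $H_n(B,\omega\otimes_B M)\cong H^{d-n}(B,M)$.

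The genuinely routine parts are routine; the two points demanding care are (i) the cancellation argument pinning $d=\mathrm{dim}(B)$, and, more seriously, (ii) the last identification, where one must track exactly which of the several $B$- and $B^e$-actions is used at each tensor and Hom (the $\tau$-twists, and on which side of $M$ the flatness must be imposed) and must verify that every isomorphism in the chain is natural in $M$, so that the conclusion is an isomorphism of functors. I expect (ii) to be the main obstacle — not because it is deep, but because it is where the flatness assumption is consumed and where a side error would silently destroy naturality; this is exactly the bookkeeping carried out carefully in \cite{vdb,nu}.
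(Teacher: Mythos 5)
Your proposal follows the paper's proof essentially step for step: a finite resolution of $B$ by finitely generated projective $B^e$-modules is dualised into a projective resolution of $\omega$ (the paper normalises the length to $d$ via Schanuel's lemma with reference to \cite{nu}, where you use a splitting/cancellation argument -- a minor variant of the same step), and the duality then comes from the evaluation isomorphism $\mathrm{Hom}_{B^e}(P_\bullet,M)\simeq P_\bullet^\vee\otimes_{B^e}M\simeq(P_\bullet^\vee\otimes_B M)\otimes_{B^e}B$, with the right $B$-flatness of $M$ consumed exactly where you place it. So this is the same argument as in the paper, and the left/right bookkeeping you defer to \cite{vdb,nu} is precisely what the paper itself defers there.
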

\begin{proof}
The assumption that $B$ is smooth means
that the $B^e$-module $B$ admits a resolution 
$P_\bullet$ of finite length consisting
of finitely generated
projective $B^e$-modules. Using 
$H^n(B,B^e)=0$ for $n \neq d$ and 
Schanuel's lemma one can 
assume without loss of generality
(see the proof of Theorem~23 in
 \cite{nu} for the detailed argument) that
this resolution has length $d$, and then
$P_{d-\bullet}^*:=\mathrm{Hom}_{B^e}(P_{d-\bullet},B^e)$
is a finitely generated projective
 resolution of $\omega$. Therefore we have canonical isomorphisms 
\begin{equation}
		  \mathrm{Hom}_{B^e}(P_\bullet,M)
		  \simeq
		  P_\bullet^* \otimes_{B^e} M
		  \simeq
		  (P_\bullet^* \otimes_B M)
		  \otimes_{B^e} B.
\end{equation} 
As a right $B^e$-projective
 module, $P^*_\bullet$ is right
 $B$-flat, so $P^*_\bullet
 \otimes_B M$ is a resolution of $\omega
\otimes_B M$. Furthermore, one easily
 convinces oneself that $P^*_\bullet
 \otimes_B M$ is $B^e$-flat if
$M$ is right $B$-flat (taking into
 account that $P_\bullet^*$ is finitely
 generated projective over $B^e$). Hence taking   
homology in the above equation gives
$$
		  \mathrm{Ext}^n_{B^e}(B,M) \simeq
		  \mathrm{Tor}_{d-n}^{B^e}(\omega
 \otimes_B M,B)
$$
as claimed.
\end{proof}

The point of our main result Theorem~\ref{main} is that the
condition about $H^\bullet(B,B^e)$ can be replaced
for quantum homogeneous spaces by the Cohen-Macaulay
condition which is easier to check for concrete
examples as we shall see below (it boils down to
constructing resolutions of the $B$-module 
$k$ rather than of the $B$-bimodule $B$). In the
commutative case, Van den Bergh's condition is 
a global one concerned with the behaviour 
of the embedding of the 
corresponding space $X$ as the diagonal 
into $X \times X$, while the Cohen-Macaulay condition 
in Theorem~\ref{main} is local in nature, dealing 
only with the local properties of $X$ around the point
corresponding to $ \varepsilon $.  

\subsection{Quantum homogeneous
  spaces}\label{qhs}  
We will freely use standard
conventions and notations from Hopf
algebra theory. In particular, we denote by
$\Delta,\varepsilon,S$ the coproduct,
counit and antipode of a co-  or Hopf
algebra and use Sweedler's notation 
$\Delta (a)=a_{(1)} \otimes
a_{(2)}$ for coproducts and 
$m \mapsto m_{(-1)} \otimes m_{(0)}$
and $n \mapsto n_{(0)} \otimes n_{(1)}$
for left and right coactions, see e.g.~\cite{chef,sweedler}.

We recall in this section from
\cite{mw,ms,takeuchi} various characterisations of
the right faithful flatness of a Hopf algebra
$A$ over a right
coideal subalgebra $B$ that we use later. Some of them
are given in terms of the
left coaction
$$
		  A \rightarrow C \otimes A,\quad
		  a \mapsto a_{(-1)} \otimes
		  a_{(0)}:=
		  \pi (a_{(1)}) \otimes a_{(2)},
$$
where we write as in the introduction
$$
		   \pi : A \rightarrow C:=A/B^+A,\quad a \mapsto 
			\pi (a) := a \mbox{ mod } B^+A,\quad   
		  B^+:=B \cap \mathrm{ker}\,\varepsilon.
$$
Yet others involve the
categories $\M^C$ and ${}_B \M^A$ of right $C$-comodules
and of 
$(B,A)$-Hopf modules, meaning
left $B$-modules and 
right $A$-comodules $M$ for which the
coaction $M \rightarrow M \otimes A$ is
$B$-linear if $B$ acts on 
$M \otimes A$ via 
$$
		  b(m \otimes a):=b_{(1)}m \otimes
		  b_{(2)}a\quad
		  b \in B,m \in M,a \in A.
$$
There are two functors relating these
two categories. The first one is
\begin{equation}\label{drueckt}
		  {}_B \M^A \rightarrow \M^C,\quad
		  M \mapsto M/B^+M,
\end{equation} 
where the $C$-coaction on $M/B^+M$
is induced by the
$A$-coaction on $M$, and the second one is the
cotensor product
\begin{equation}\label{cott}
		  \M^C \rightarrow {}_B \M^A,\quad
			N \mapsto N \Box_C A
\end{equation} 
given for $N \in \M^C$ with coaction 
$N \rightarrow N \otimes C$,
$n \mapsto
n_{(0)} \otimes n_{(1)}$ 
by
$$
		  N \Box_C
		  A:=\{\sum_i n^i \otimes
		  a^i \in N \otimes A\,|\,
		  \sum_i n^i_{(0)} \otimes
		  n^i_{(1)} \otimes a^i=
		  \sum_i n^i \otimes a^i_{(-1)} \otimes a^i_{(0)}\}
$$
on which the $B$-action and $A$-coaction
are given by (co)multiplication in $A$.

The following is \cite{mw}, Theorem~2.1 and 
\cite{ms}, Theorem~1.2 and Remark~1.3: 
\begin{thm}\label{dirsum}
Let $A$ be a Hopf algebra with bijective
 antipode and
$B \subset A$ be a right coideal
subalgebra. Then the following are
 equivalent:
 \begin{enumerate}
\item $A$ is faithfully flat as a right
		module over $B$.
\item $A$ is projective as a right $B$-module 
and there exists 
$B^\perp \subset A$ such that 
$A=B \oplus B^\perp$ as right $B$-module.
\item The functors (\ref{drueckt}) and
		(\ref{cott}) are (quasi)inverse
		equivalences.
\item $A$ is left $C:=A/B^+A$-coflat and we have
$$
		  B=k \Box_C A=
		  \{b \in A\,|\,\pi(b_{(1)}) \otimes b_{(2)}=
		  \pi(1) \otimes b\}.
$$
\end{enumerate}  
\end{thm}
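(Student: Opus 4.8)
The plan is to prove the cycle $(1)\Rightarrow(3)\Rightarrow(4)\Rightarrow(1)$ together with $(1)\Leftrightarrow(2)$. Of these, only $(2)\Rightarrow(1)$ is purely formal: a projective module is flat, and a decomposition $A=B\oplus B^\perp$ of right $B$-modules gives, for every left $B$-module $M$, a decomposition $A\otimes_B M\cong M\oplus(B^\perp\otimes_B M)$, which is nonzero whenever $M\neq 0$; hence $A$ is right faithfully flat over $B$. Everything else is organised around the category equivalence in (3).

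The heart is $(1)\Rightarrow(3)$, the relative form --- due in this generality to Masuoka--Wigner and M\"uller--Schneider --- of the fundamental theorem of Hopf modules. The functors (\ref{drueckt}) and (\ref{cott}) form an adjunction, with unit $M\to(M/B^+M)\Box_C A$, $m\mapsto\overline{m_{(0)}}\otimes m_{(1)}$, and counit $(N\Box_C A)/B^+(N\Box_C A)\to N$ induced by the $A$-coaction; the task is to show that both are isomorphisms once $A$ is right faithfully flat over $B$. One tests first on cofree comodules $V\otimes C$, where $(V\otimes C)\Box_C A\cong V\otimes A$ and hence $(V\otimes A)/B^+(V\otimes A)\cong V\otimes(A/B^+A)=V\otimes C$, so the counit is visibly an isomorphism; dually the unit is an isomorphism on induced Hopf modules. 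Since every $C$-comodule embeds into a cofree one and every $(B,A)$-Hopf module admits a presentation by induced ones, an argument using faithful flatness --- which, crucially, also forces the left $C$-coflatness of $A$ along the way --- propagates the isomorphisms to all $M$ and $N$; bijectivity of the antipode is what permits the interchange of left and right coactions that this propagation needs. I expect this propagation, the one place where \emph{faithful} flatness rather than mere flatness is indispensable, to be the main technical obstacle.

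Granting (3), the step $(3)\Rightarrow(4)$ is short. The field $k$ becomes a right $C$-comodule via $1\mapsto 1\otimes\pi(1)$, which is legitimate because $\pi(1)$ is group-like in $C=A/B^+A$, and unwinding the cotensor gives $k\Box_C A=\{b\in A\mid\pi(b_{(1)})\otimes b_{(2)}=\pi(1)\otimes b\}$; since $\Delta(B)\subseteq B\otimes A$ and $\pi$ annihilates $B^+$, one checks at once that $B\subseteq k\Box_C A$. Conversely $B\in{}_B\M^A$ with $B/B^+B\cong k$ via $\varepsilon$, so the inclusion $B\hookrightarrow k\Box_C A$ is a morphism of Hopf modules inducing the isomorphism $B/B^+B\cong k$ on applying (\ref{drueckt}); by the equivalence it is itself an isomorphism, i.e.\ $B=k\Box_C A$. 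Finally, the cotensor functor (\ref{cott}) is always left exact (it is the equalizer of a pair of maps $N\otimes A\rightrightarrows N\otimes C\otimes A$), and being half of an equivalence it is exact, which is exactly the left $C$-coflatness of $A$.

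For $(4)\Rightarrow(1)$ one argues by descent: a coresolution of $k$ by cofree $C$-comodules is carried by the cotensor functor --- exact, since $A$ is left $C$-coflat --- to a coresolution of $B=k\Box_C A$ by right $B$-modules of the form $V\otimes A$, and this, together with the coinvariance description of $B$, is used to verify that $A\otimes_B(-)$ is exact and reflects zero objects. It remains to prove $(1)\Rightarrow(2)$; this is the second substantial point and does not follow formally from the equivalence, since a faithfully flat subring extension need not be projective. One must exploit the Hopf-module structure of $A$ together with the bijectivity of $S$ --- for instance by transporting the injective object $C\in\M^C$, and the split coalgebra counit $C\to k$, through the left--right mirror of the equivalence of (3), which is available precisely because $S$ is bijective --- to conclude that $A$ is projective as a right $B$-module, and in doing so one extracts the complement $B^\perp$ in $A=B\oplus B^\perp$.
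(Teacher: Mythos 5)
First, a point of reference: the paper does not prove this theorem at all --- it is quoted verbatim from Masuoka--Wigner (\cite{mw}, Theorem~2.1) and M\"uller--Schneider (\cite{ms}, Theorem~1.2 and Remark~1.3), so there is no in-paper argument to compare yours against; the comparison has to be with those sources, whose strategy (Takeuchi's relative fundamental theorem of Hopf modules, the Galois map $A\otimes_B A\to C\otimes A$, and faithfully flat descent) your outline does broadly mirror.

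Judged as a proof, however, the proposal has genuine gaps exactly at the two places where the theorem has real content, and you in effect concede this yourself. For $(1)\Rightarrow(3)$ you set up the correct adjunction and check it on cofree comodules and induced Hopf modules, but the ``propagation'' to arbitrary objects is precisely the faithfully flat descent argument (via the Galois isomorphism $A\otimes_B A\simeq C\otimes A$ and the descent of the unit/counit along the faithfully flat map $B\to A$); naming it as ``the main technical obstacle'' is not the same as carrying it out, and without it $(1)\Rightarrow(3)$, and hence everything downstream of it, is unproved. For $(1)\Rightarrow(2)$ the situation is worse: projectivity of $A$ over $B$ together with the splitting $A=B\oplus B^\perp$ is the deepest part of \cite{mw} (it rests on showing that $A$ is injective, indeed a generator-type object, as a $C$-comodule and transporting relative injectivity through the equivalence --- a Doi/Schneider-type total-integral argument), and your text only says one ``must exploit'' the Hopf-module structure and bijectivity of $S$ without indicating how the complement is actually produced. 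The step $(4)\Rightarrow(1)$ is likewise only gestured at: it is not explained why a coresolution of $B$ by modules $V\otimes A$ makes $A\otimes_B(\cdot)$ exact and faithful; in Takeuchi's argument this again passes through the Galois map and a careful comparison of $\otimes_B$ with $\Box_C$, none of which appears here. The purely formal pieces you do complete ($(2)\Rightarrow(1)$, $(3)\Rightarrow(4)$) are fine, but they are the easy ones.
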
   

If $A B^+=B^+ A$, then Remark~1.3 in
\cite{ms} also tells that $A$ is faithfully flat as a
left module if it is faithfully flat as a right
module. 
\begin{question}
Is this true in general? 
\end{question}

By Theorem~\ref{dirsum} (4) a quantum homogeneous space 
can be recovered from $ \pi : A \rightarrow C$ 
as $k \Box_C A$.
Many examples are in fact defined in this way 
starting with $ \pi $.  
This works in 
particular when $C$ is 
cosemisimple (equals the direct sum of its
simple subcoalgebras), see 
e.g.~\cite{ms}, Corollary~1.5: 

\begin{cor}\label{schneller}
Let $A$ be a Hopf algebra with bijective 
antipode, $ \pi : A \rightarrow C$ 
be a coalgebra and right 
$A$-module quotient, and assume that
$C$ is cosemisimple. Then
$B:=k \Box_C A \subset A$ is a quantum 
homogeneous space and $C \simeq A/B^+ A$.
\end{cor}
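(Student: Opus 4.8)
The plan is to deduce Corollary~\ref{schneller} directly from Theorem~\ref{dirsum}, the only work being to upgrade the hypothesis that $\pi : A \to C$ is a coalgebra and right $A$-module quotient, with $C$ cosemisimple, into the verification that $B := k \Box_C A$ is a right coideal subalgebra over which $A$ is right faithfully flat, and that $\pi$ coincides with the canonical projection $A \to A/B^+A$. First I would check the elementary structural claims: that $B = k \Box_C A = \{b \in A \mid \pi(b_{(1)}) \otimes b_{(2)} = \pi(1) \otimes b\}$ is a subalgebra (this uses that $\pi$ is an algebra-compatible coalgebra map in the weak sense that it is right $A$-linear, so the defining equation is stable under multiplication), contains $1$, and is a right coideal, i.e. $\Delta(B) \subset B \otimes A$ (immediate from coassociativity applied to the defining equation). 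The left coaction $a \mapsto \pi(a_{(1)}) \otimes a_{(2)}$ makes $A$ a left $C$-comodule.

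The heart of the argument is faithful flatness, and here the plan is to invoke criterion~(4) of Theorem~\ref{dirsum} in the form: $A$ is right faithfully flat over $B$ as soon as $A$ is left $C$-coflat and $B = k \Box_C A$. The second condition holds by construction; the first is where cosemisimplicity enters. The key step — and the main obstacle — is to show that a cosemisimple coalgebra $C$ is \emph{coflat} as needed, or rather that every comodule over it is. Cosemisimplicity of $C$ means that the category $\M^C$ of right $C$-comodules (equivalently the category of left $C$-comodules, which is where $A$ lives) is semisimple: every comodule is a direct sum of simple subcomodules, hence every comodule is injective and, dually, the cotensor functor $- \,\Box_C A$ is exact. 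In particular $A$, viewed as a left $C$-comodule, is coflat. I would spell this out by recalling that for a cosemisimple $C$ the forgetful functor $\M^C \to \mathrm{Vect}_k$ reflects exactness of cotensoring because short exact sequences of comodules split; thus $\Box_C$ is exact on either side, which is precisely left $C$-coflatness of $A$. This is the one place where I expect to need to be a little careful: coflatness is about exactness of $N \mapsto N \Box_C A$ on $\M^C$, and one must make sure the relevant one-sidedness matches the statement of Theorem~\ref{dirsum}(4); the symmetry of cosemisimplicity (a coalgebra is cosemisimple iff its co-opposite is) removes any asymmetry.

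Granting coflatness, Theorem~\ref{dirsum} gives that $A$ is right faithfully flat over $B = k \Box_C A$, so $B \subset A$ is a quantum homogeneous space in the sense of Definition~\ref{defqhs} (the antipode of $A$ is bijective by hypothesis). It remains only to identify $C$ with $A/B^+A$. One inclusion is formal: since $\pi$ is a right $A$-module map with $\pi(b) = \pi(1)\varepsilon(b)$-type vanishing on $B^+$ — more precisely $\pi(B^+A) = \pi(B^+)A$ and $\pi(B^+) = 0$ because for $b \in B^+$ the defining comodule equation forces $\pi(b) = \pi(1)\varepsilon(b) = 0$ — $\pi$ factors through a surjection $A/B^+A \to C$. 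For the reverse, one uses that $B = k \Box_C A$ together with the equivalence of categories in Theorem~\ref{dirsum}(3): applying the functor $M \mapsto M/B^+M$ of~(\ref{drueckt}) to $M = A \in {}_B\M^A$ recovers the cogenerating comodule, and since $A \Box_C$ applied to $C$ is $A$ itself, the quasi-inverse equivalence forces $A/B^+A \simeq C$ as right $C$-comodules, hence as coalgebras since the isomorphism is induced by $\pi$. I would close by remarking this is exactly \cite{ms}, Corollary~1.5, so the proof is really an assembly of the cited results once the role of cosemisimplicity is isolated.
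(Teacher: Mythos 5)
The routine parts of your plan are fine: $B=k\Box_C A$ is a right coideal subalgebra containing $1$, cosemisimplicity of $C$ makes every $C$-comodule injective and coflat, and $\pi$ does factor through a surjection $A/B^+A\to C$, since $\pi(b)=\pi(1)\varepsilon(b)$ for $b\in B$ and $\pi$ is right $A$-linear. The gap is in the load-bearing step. Condition (4) of Theorem~\ref{dirsum} is a statement about the coalgebra $A/B^+A$ canonically attached to $B$, not about the given quotient $\pi:A\to C$: it requires $A$ to be left coflat over $A/B^+A$ and $B=k\Box_{A/B^+A}A$. What you verify is coflatness over the given $C$ and $B=k\Box_C A$ (the latter by construction), and these only become the hypotheses of (4) once one knows that the canonical surjection $A/B^+A\to C$ is bijective --- which is exactly the statement you postpone and then deduce from the equivalence in Theorem~\ref{dirsum}(3). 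Since (3) is available only after faithful flatness is established, you derive faithful flatness from the identification $C\simeq A/B^+A$ and the identification from faithful flatness: the argument is circular. (The final step also conflates the two coalgebras: the equivalence in (3) is between ${}_B\M^A$ and $\M^{A/B^+A}$, so applying (\ref{drueckt}) to $A$ and cotensoring back cannot by itself produce an isomorphism with the \emph{given} $C$.)

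What does come for free is $k\Box_{A/B^+A}A=k\Box_C A=B$: one inclusion by composing the first leg with the surjection $A/B^+A\to C$, the other because $\Delta(B)\subset B\otimes A$ and the canonical projection sends any $b\in B$ to $\varepsilon(b)$ times the class of $1$. So the genuinely missing ingredient is precisely the left coflatness of $A$ over $A/B^+A$, equivalently the injectivity of $A/B^+A\to C$, i.e. $\mathrm{ker}\,\pi\subset B^+A$. That is the actual content of \cite{ms}, Corollary~1.5 (going back to \cite{takeuchi}), and it uses cosemisimplicity in a stronger way than ``cotensoring with $A$ is exact'': one exploits that $\pi$ splits as a map of $C$-comodules (equivalently, a Haar-type functional $h$ on the cosemisimple coalgebra $C$, which yields a $B$-linear projection $a\mapsto h(\pi(a_{(1)}))a_{(2)}$ of $A$ onto $B$ and in particular the faithful coflatness of $A$ over $C$, a condition your argument never addresses), and from such a splitting one extracts both faithful flatness and $\mathrm{ker}\,\pi\subset B^+A$. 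None of this is contained in Theorem~\ref{dirsum} as stated; the paper itself does not reprove the corollary but cites \cite{ms}, so if you want a self-contained proof you must supply this step rather than assemble it from Theorem~\ref{dirsum} alone.
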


For $C=k[H]$
cosemisimplicity means that 
$H$ is reductive, so 
a quotient $H \setminus \! G$ of an 
algebraic group
$G$ by a reductive subgroup $H$ is 
affine with
coordinate ring $B$ isomorphic to 
the ring of 
$H$-invariant regular functions on $G$ 
(this is essentially the classical 
Matsushima-Onishchik theorem). 
A
$(B,A)$-Hopf module $M \in {}_B \M^A$ is
here isomorphic
to the module of sections of the
$G$-homogeneous vector bundle with typical fibre
$M/B^+M$.

Later we
will also use categories that we
denote by $\M^C_{B,\tau}$ and by ${}_B
\M^A_{B,\tau}$, where $\tau : B
\rightarrow A$ is an algebra map. By the first we shall mean the
category of right $C$-comodules and
right $B$-modules $N$ that satisfy 
\begin{equation}\label{britzel}
		  (nb)_{(0)} \otimes
			(nb)_{(1)}=n_{(0)}b_{(1)} \otimes
		  n_{(1)}\tau(b_{(2)}), 
\end{equation} 
where we use in
the second tensor component on the right
hand side the right $A$-action on
$A/B^+A$. Similarly, objects in
${}_B \M^A_{B,\tau}$ are objects in 
${}_B \M^A$ with an additional right
$B$-action that commutes with the left
one and satisfies (\ref{britzel}), now
being an equation in $M \otimes A$.  
Clearly, the equivalence ${}_B \M^A
\simeq \M^C$ that holds in the
faithfully flat case also induces
${}_B \M^A_{B,\tau}
\simeq \M^C_{B,\tau}$.

\subsection{$H^\bullet(B,M)$ and 
$\mathrm{Ext}^\bullet_B(k,\mathrm{ad}(M))$}\label{ginseng}
Here we remark that the description of the Hochschild
(co)homology of a Hopf algebra $A$ 
used in \cite{bz} works almost as well for quantum
homogeneous spaces $B \subset A$.
The proof is the same as for $B=A$ 
\cite{ft,gk}, we recall it only for the convenience of the
reader:

\begin{lemma}\label{gklem}
Let $A$ be a Hopf algebra, $B \subset A$ be a right
coideal subalgebra, and $M$ be a $B$-$A$-bimodule.
Consider $k$ as left $B$-module with action given by 
the counit $ \varepsilon $ of $A$, and let $\mathrm{ad}(M)$   
be the left $B$-module which is $M$ as vector space with
 left action given by the adjoint action
$\mathrm{ad}(b)m:=b_{(1)} m S(b_{(2)})$.
Then there is a vector space isomorphism 
$H^\bullet(B,M) \simeq 
\mathrm{Ext}^\bullet_B(k,\mathrm{ad}(M))$.  
\end{lemma}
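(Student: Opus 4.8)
The plan is to realise the Hochschild cochain complex $C^\bullet(B,M) = \mathrm{Hom}_k(B^{\otimes \bullet},M)$ as the complex computing $\mathrm{Ext}^\bullet_B(k,\mathrm{ad}(M))$ via a suitable free resolution of $k$ over $B$, exactly as in the Hopf algebra case \cite{ft,gk}. First I would write down the standard bar resolution $\cdots \to B \otimes B^{\otimes n} \to \cdots \to B \to k \to 0$ of the trivial left $B$-module $k$ (with differentials built from multiplication in $B$ and the counit $\varepsilon$ in the last slot); applying $\mathrm{Hom}_B(-,\mathrm{ad}(M))$ gives a complex whose underlying graded vector space is again $\mathrm{Hom}_k(B^{\otimes \bullet},M)$, so the content of the lemma is purely that the two differentials agree after an appropriate identification. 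The key step is therefore to exhibit an explicit isomorphism of cochain complexes $\Phi : C^\bullet(B,M) \to \mathrm{Hom}_B(B \otimes B^{\otimes \bullet},\mathrm{ad}(M))$ and check it intertwines the Hochschild coboundary (\ref{sputzens}) with the bar differential twisted by the adjoint action.

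The explicit map is the ``twisting'' familiar from the group-algebra / Hopf-algebra setting: given $\varphi \in C^n(B,M)$ one sets
\begin{equation*}
(\Phi\varphi)(b^1,\ldots,b^n) := \varphi\bigl(b^1{}_{(1)},\,b^1{}_{(2)}b^2{}_{(1)},\,\ldots\bigr)\, S(\text{suitable products of second legs}),
\end{equation*}
or more cleanly one uses its inverse, turning an $\mathrm{ad}$-equivariant cochain back into an ordinary one by multiplying with antipodes. Concretely the relevant bijection sends $f : B^{\otimes n} \to M$ to the map $b^1 \otimes \cdots \otimes b^n \mapsto f(b^1,\ldots,b^n)$ precomposed with the coalgebra automorphism of $B^{\otimes n}$ given by $b^1 \otimes \cdots \otimes b^n \mapsto b^1{}_{(1)} \otimes b^1{}_{(2)}b^2{}_{(1)} \otimes b^1{}_{(3)}b^2{}_{(2)}b^3{}_{(1)} \otimes \cdots$, whose inverse uses $S$; this is a linear isomorphism on the nose because $B$ is a coideal subalgebra so $\Delta(B) \subset B \otimes A$ and the partial products land in $B$ again only in the relevant legs — one must be slightly careful that all the Sweedler components appearing as arguments of $f$ genuinely lie in $B$, which holds since the first leg of $\Delta|_B$ is in $B$. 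Having $\Phi$ in hand, I would verify $\bb \circ \Phi = \Phi \circ \partial_{\mathrm{bar}}^{\mathrm{ad}}$ by a direct Sweedler-notation computation: the three types of terms in (\ref{sputzens}) (left multiplication by $b^1$, the inner products $b^ib^{i+1}$, and right multiplication by $b^{n+1}$) correspond respectively to the first and last faces and the interior faces of the bar differential, the $\mathrm{ad}$-action $b_{(1)} m S(b_{(2)})$ reassembling the left $b^1$-action and the counit killing the last slot; the $S$'s introduced by $\Phi$ and those in the adjoint action telescope.

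I do not expect a serious obstacle here — the proof really is ``the same as for $B=A$'', and the only points needing care are bookkeeping ones: (i) checking that the twisting isomorphism $\Phi$ is well defined, i.e. that every Sweedler leg fed into a cochain lies in $B$ and not merely in $A$ (this is where $\Delta(B) \subset B \otimes A$ is used, together with the fact that products of such legs stay in $B$); and (ii) matching signs and the boundary faces in the coboundary computation. No faithful flatness of $A$ over $B$ is needed, consistent with the lemma's hypotheses. So the structure of the write-up is: recall the bar resolution of $k$ over $B$; recall that $\mathrm{Hom}_B(B \otimes B^{\otimes\bullet},\mathrm{ad}(M)) \cong \mathrm{Hom}_k(B^{\otimes\bullet},M)$ as graded spaces, computing $\mathrm{Ext}^\bullet_B(k,\mathrm{ad}(M))$; define $\Phi$ and note it is a linear isomorphism in each degree; compute that it is a chain map; conclude it induces the asserted isomorphism on cohomology, which on the left is $H^\bullet(B,M)$ by definition of the Hochschild complex.
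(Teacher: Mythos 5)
Your overall strategy --- the bar resolution of the trivial left $B$-module $k$, the identification of $\mathrm{Hom}_B(B\otimes B^{\otimes\bullet},\mathrm{ad}(M))$ with $\mathrm{Hom}_k(B^{\otimes\bullet},M)$ as graded vector spaces, and an explicit degreewise isomorphism intertwining the twisted differential with the Hochschild coboundary --- is exactly the one the paper follows. The gap is in the concrete isomorphism you choose. Precomposition with the ``shear'' map $b^1\otimes\cdots\otimes b^n \mapsto b^1_{(1)}\otimes b^1_{(2)}b^2_{(1)}\otimes b^1_{(3)}b^2_{(2)}b^3_{(1)}\otimes\cdots$ is only available when $\Delta(B)\subset B\otimes B$, and its inverse moreover needs $S(B)\subset B$: for a mere right coideal subalgebra only the first Sweedler leg of $b\in B$ stays in $B$, whereas $b^1_{(2)}\in A$, so already the second slot $b^1_{(2)}b^2_{(1)}$ lies in $AB\subset A$ and cannot be fed into a cochain defined on $B^{\otimes n}$. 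The same defect affects your first displayed formula, whose arguments are these partial products. So the claim that ``all the Sweedler components appearing as arguments of $f$ genuinely lie in $B$'' is true only for the first slot, and the proposed $\Phi$ is not defined; this is precisely the point where the coideal case differs from $B=A$, where both identifications exist.

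The repair --- and the paper's actual proof --- is to feed only \emph{first} legs into the cochain and to push all second legs into the coefficients via the right $A$-action on $M$: one sets $\xi(\varphi)(b^1,\ldots,b^n):=\varphi(b^1_{(1)},\ldots,b^n_{(1)})\,b^1_{(2)}\cdots b^n_{(2)}$, with inverse $\xi^{-1}(\varphi)(b^1,\ldots,b^n):=\varphi(b^1_{(1)},\ldots,b^n_{(1)})\,S(b^1_{(2)}\cdots b^n_{(2)})$. These are well defined because $b^i_{(1)}\in B$ (by $\Delta(B)\subset B\otimes A$) and because $M$ carries a right $A$-module structure --- exactly where the hypothesis that $M$ is a $B$-$A$-bimodule rather than a $B$-bimodule enters, a hypothesis your concrete formulas never use. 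One then verifies $\bb\circ\xi=\xi\circ\di$, where $\di$ is the bar differential twisted by the adjoint action, by the Sweedler computation you outline; with this substitution the rest of your argument (bar resolution, degreewise identification, passage to cohomology, no flatness needed) goes through as in the paper.
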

\begin{proof}
Compute 
$\mathrm{Ext}^\bullet_B(k,\mathrm{ad}(M))$
using the free resolution
$$
		  \ldots \rightarrow B^{\otimes 3} \rightarrow 
		  B^{\otimes 2} \rightarrow B
$$ 
of the $B$-module $k$ whose
boundary map is given by 
$$
		  b^0 \otimes \cdots \otimes b^n \mapsto 
		  \sum_{i=0}^{n-1} (-1)^i 
		  b^0 \otimes \cdots \otimes b^ib^{i+1} \otimes 
		  \cdots \otimes b^n
		  +(-1)^n b^0 \otimes \cdots \otimes
		  b^{n-1} \varepsilon (b^n).
$$  
After identifying $B$-linear maps 
$B^{\otimes n+1} \rightarrow M$ with 
$k$-linear maps $B^{\otimes n} \rightarrow M$ (fill the
 zeroth tensor component with $1 \in B$),
this realises $\mathrm{Ext}^\bullet_B(k,\mathrm{ad}(M))$
as the cohomology of the cochain complex 
which as a vector space is
$$
		  C^\bullet(B,M)=
		  \mathrm{Hom}_k(B^{\otimes \bullet},M),
$$ 
the standard Hochschild cochain
complex, but whose coboundary map is
\begin{align}
& \di \varphi (b^1,\ldots,b^{n+1}) \nonumber\\ 
=\quad& \mathrm{ad} (b^1) \varphi(b^2,\ldots,b^{n+1})
		 +\sum_{i=1}^{n} (-1)^i 
		  \varphi (b^1,\ldots,b^ib^{i+1},\ldots,b^{n+1}) \nonumber\\ 
&+(-1)^{n+1} \varphi (b^1,\ldots,b^n)\varepsilon
		  (b^{n+1}).
		  \nonumber
\end{align}
Now consider the $k$-linear isomorphism 
$$
		  \xi : C^\bullet(B,M) \rightarrow C^\bullet(B,M),\quad
		  (\xi (\varphi ))(b^1,\ldots,b^n):=
		  \varphi (b^1_{(1)},\ldots,b^n_{(1)})
		  b^1_{(2)} \cdots b^n_{(2)}
$$
whose inverse is given by 
$$
		  (\xi^{-1} (\varphi ))(b^1,\ldots,b^n):=
		  \varphi (b^1_{(1)},\ldots,b^n_{(1)})
		  S(b^1_{(2)} \cdots b^n_{(2)}).
$$
Then
$\bb \circ \xi = \xi \circ \di$, where $\bb$ is the standard Hochschild
 coboundary operator (\ref{sputzens}),
 so $(C^\bullet(B,M),\di) \simeq
 (C^\bullet(B,M),\bb)$ as cochain complexes.
\end{proof}

One can apply Theorem~VIII.3.1 from \cite{ce}
to the map $B \rightarrow B \otimes A^\mathrm{op}$,
$b \mapsto b_{(1)} \otimes S(b_{(2)})$
to show
$\mathrm{Ext}^\bullet_{B \otimes A^\mathrm{op}} 
(A,M) \simeq \mathrm{Ext}^\bullet_B(k,\mathrm{ad}(M))$.
When $A$ is flat over $B$, then the same theorem
applied to the obvious embedding of
$B \otimes B^\mathrm{op}$ into 
$B \otimes A^\mathrm{op}$ also implies
$\mathrm{Ext}^\bullet_{B \otimes A^\mathrm{op}} 
(A,M) \simeq H^\bullet(B,M)$ and hence the above
lemma. We included the above proof since
it does not require flatness. On the other hand, this 
seems to be a rather weak condition. It is always
satisfied in the commutative case \cite{mw},
note also the recent results of Skryabin~\cite{skryabin}. 
For a counterexample see \cite{pete},
Corollary~2.8
and Remark~2.9.
  
\subsection{The proof of Theorem~\ref{main}}\label{mpro}  
To get Theorem~\ref{main} we only have to consider the special
case $M=B \otimes A$ of
Lemma~\ref{gklem} in more detail. We
first recall:
\begin{lemma}\label{blemma}
Let $R,S$ be rings, $L$ be an
 $R$-module, $M$ be an $R$-$S$-bimodule
 and $N$ be an $S$-module. Then the
 canonical map 
$$\mathrm{Ext}^n_R(L,M)
 \otimes_S N \rightarrow
 \mathrm{Ext}^n_R(L,M \otimes_S N)
$$ is bijective if  $N$ is
 flat and $L$ admits a finitely
 generated projective resolution. 
\end{lemma}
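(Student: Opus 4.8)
I want to prove Lemma~\ref{blemma}: for rings $R,S$, an $R$-module $L$ admitting a finitely generated projective resolution, an $R$-$S$-bimodule $M$, and a flat $S$-module $N$, the canonical map $\mathrm{Ext}^n_R(L,M)\otimes_S N\to\mathrm{Ext}^n_R(L,M\otimes_S N)$ is an isomorphism. The whole argument is a standard "flat base change commutes with a finite projective resolution" computation, so let me record how I would organise it.

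First I would fix a resolution $P_\bullet\to L$ by \emph{finitely generated projective} $R$-modules; such a resolution exists by hypothesis (I do not even need it to be of finite length, only that each $P_i$ is finitely generated projective, although the hypothesis as stated gives finite length too). Then $\mathrm{Ext}^\bullet_R(L,M)$ is the cohomology of the complex $\mathrm{Hom}_R(P_\bullet,M)$, and likewise $\mathrm{Ext}^\bullet_R(L,M\otimes_S N)$ is the cohomology of $\mathrm{Hom}_R(P_\bullet,M\otimes_S N)$. The canonical map in the statement is induced at the cochain level by the natural transformation
\[
\theta_i\colon\ \mathrm{Hom}_R(P_i,M)\otimes_S N\ \longrightarrow\ \mathrm{Hom}_R(P_i,M\otimes_S N),\qquad
\varphi\otimes n\ \longmapsto\ \big(p\mapsto \varphi(p)\otimes n\big).
\]
The key point, which I would verify next, is that $\theta_i$ is an isomorphism for every $i$: this is the classical fact that for a finitely generated projective $R$-module $P$ the functor $\mathrm{Hom}_R(P,-)$ commutes with the tensor functor $-\otimes_S N$. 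One proves it by reducing along direct sums and retracts to the free case $P=R$, where both sides are just $M\otimes_S N$ and $\theta$ is the identity; finite generation is exactly what lets the finite direct-sum argument go through (an infinite direct sum would interchange $\mathrm{Hom}$ with a product, not a coproduct, and the comparison would fail). So $\theta_\bullet$ is an isomorphism of cochain complexes.

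Now I would take cohomology. Because $N$ is a flat $S$-module, the functor $-\otimes_S N$ is exact, hence it commutes with taking cohomology of any complex of $S$-modules; in particular
\[
H^n\big(\mathrm{Hom}_R(P_\bullet,M)\otimes_S N\big)\ \simeq\ H^n\big(\mathrm{Hom}_R(P_\bullet,M)\big)\otimes_S N\ =\ \mathrm{Ext}^n_R(L,M)\otimes_S N .
\]
Combining this with the cochain isomorphism $\theta_\bullet$ of the previous paragraph gives
\[
\mathrm{Ext}^n_R(L,M)\otimes_S N\ \simeq\ H^n\big(\mathrm{Hom}_R(P_\bullet,M\otimes_S N)\big)\ =\ \mathrm{Ext}^n_R(L,M\otimes_S N),
\]
and one checks (by naturality of $\theta$ in $P_i$, or simply by tracing the construction) that the composite is the canonical map of the statement, not merely some abstract isomorphism. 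That completes the proof.

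**Expected obstacle.** There is no deep obstacle here; the only point requiring care is the identification of the abstractly constructed isomorphism with the \emph{canonical} map named in the lemma — one must make sure $\theta_\bullet$ really is a chain map inducing that map, and that no sign or direction is mishandled. The finite-generation hypothesis on the resolution enters solely in the free/projective reduction step for $\theta_i$, and flatness of $N$ enters solely in commuting $-\otimes_S N$ past cohomology; it is worth flagging explicitly that these are the two places the hypotheses are used, since later in the paper the lemma is applied with $M=B\otimes A$ and $B$ a smooth quantum homogeneous space, where the finite projective resolution of $B$ over $B^e$ is precisely what smoothness supplies.
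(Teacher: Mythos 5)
Your argument is correct and is essentially the paper's own proof: fix a finitely generated projective resolution $P_\bullet\to L$, use that $\mathrm{Hom}_R(P_\bullet,M)\otimes_S N\simeq\mathrm{Hom}_R(P_\bullet,M\otimes_S N)$ for finitely generated projectives, and then pass to cohomology using flatness of $N$ (the paper simply cites Bourbaki for these two steps, which you spell out). Your extra care about the composite agreeing with the canonical map is a welcome but minor refinement.
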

\begin{proof}
Fix a finitely generated projective
resolution $P_\bullet \rightarrow
 L$. Then one has 
$\mathrm{Hom}_R(P_\bullet,M) \otimes_S N
 \simeq
\mathrm{Hom}_R(P_\bullet,M \otimes_S N)$,
see e.g.~\cite{bourbaki}, 
Proposition~8.b)
%{\cyr Predlozhenie 8.b)} 
on p.~16. Now pass to cohomology
taking into account that $N$ is flat
(see e.g.~[ibid.], Corollary~2 %{\cyr Sledstvie}~2 
on p.~74).
\end{proof}

This will be used with $R=M=B,S=L=k$ and
$N=A$. For the assumption on $L=k$ we
recall from \cite{ce}: 
\begin{lemma}\label{wudi}
If $B$ is an algebra over a field $k$ and
$P_\bullet \rightarrow B$ is a 
(finitely generated) projective resolution of $B^e$-modules,
then $P_\bullet \otimes_B L$ is for any
left $B$-module a (finitely generated)
 projective resolution of
 $B$-modules. In particular, one has for
 any algebra
$\mathrm{gl.dim} (B) \le \mathrm{dim} (B) $.
\end{lemma}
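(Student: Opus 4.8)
The plan is to decompose the claim into two independent facts about the functor $-\otimes_B L$ applied to the complex $P_\bullet\to B$: that it carries $B^e$-projectives to projective left $B$-modules, and that it keeps $P_\bullet\to B$ exact, so that the result is again a resolution; the bound $\mathrm{gl.dim}(B)\le\mathrm{dim}(B)$ then drops out at once. This is essentially the reasoning recalled from \cite{ce}.

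First I would record the structure of $B^e$ as a one-sided module. Under the identification of $B^e$-modules with $B$-bimodules, the free rank-one module $B^e$ corresponds to $B\otimes B$ with $B$ acting by left multiplication on the first and by right multiplication on the second tensor factor; in particular $B^e$ is \emph{free} as a right $B$-module, so $B^e\otimes_B L\simeq B\otimes L$ as a left $B$-module, again free (on a $k$-basis of $L$), and finitely generated whenever $L$ is so over $k$. Passing to direct summands, any projective $B^e$-module $P$ has $P\otimes_B L$ a direct summand of a free left $B$-module, hence projective, with finite generation preserved under the evident proviso on $L$; moreover, being $B^e$-projective, each such $P$ is flat as a right $B$-module. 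I would then use this right-$B$-flatness to see that $-\otimes_B L$ preserves exactness of $P_\bullet\to B$: since $P_\bullet$ is a resolution of $B$ by right-$B$-flat modules, the homology of $P_\bullet\otimes_B L$ computes $\mathrm{Tor}^B_\bullet(B,L)$, which vanishes in positive degrees and equals $L$ in degree zero (the degree-zero identification being induced by $P_0\to B$) because $B$ is free over itself. Hence $P_\bullet\otimes_B L\to L$ is a resolution of the left $B$-module $L$ by (finitely generated) projective $B$-modules, of length at most that of $P_\bullet$.

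Finally, if $\mathrm{dim}(B)=d<\infty$ I would choose $P_\bullet$ of length $d$ with finitely generated terms; the previous step then yields $\mathrm{proj.dim}_B L\le d$ for every left $B$-module $L$, whence $\mathrm{gl.dim}(B)\le d=\mathrm{dim}(B)$, while for $\mathrm{dim}(B)=\infty$ there is nothing to prove. I do not expect a genuine obstacle here: the one point demanding care is the bookkeeping of the left and right $B$-actions on $B^e$, since it is exactly the freeness of $B^e$ on the relevant side that makes both "projective goes to projective" and "exactness is preserved" fall out; beyond that the argument is routine.
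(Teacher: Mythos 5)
Your proposal is correct and follows essentially the same route as the paper: projective $B^e$-modules are free, hence flat, as right $B$-modules, so tensoring the resolution with $L$ preserves exactness (the paper packages this by viewing the augmented complex as a flat resolution of the zero right $B$-module, while you compute $\mathrm{Tor}^B_\bullet(B,L)$ from the unaugmented one — the same argument), and $B^e\otimes_B L\simeq B\otimes L$ shows the terms become projective left $B$-modules, giving $\mathrm{gl.dim}(B)\le\mathrm{dim}(B)$. Your proviso that finite generation is only preserved when $L$ is finite-dimensional over $k$ is a fair point of care (the paper applies the lemma with $L=k$, where it is automatic).
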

\begin{proof}
The complex
$\ldots \rightarrow P_d \rightarrow \ldots
 \rightarrow P_0 \rightarrow B
 \rightarrow 0$ is a
 flat resolution of
the right $B$-module $0$. Therefore,
$H_\bullet(P \otimes_B L) \simeq \mathrm{Tor}^B_\bullet(0,L)=0$, so 
 $P_\bullet \otimes_B L$ is and it
 consists of (finitely generated) projective
 left $B$-modules. 
\end{proof}

Secondly, we need the following direct generalisation
of the case $B=A$:

\begin{lemma}\label{schgr}
Let $A$ be a Hopf algebra and $B \subset A$ be a right
coideal subalgebra. Then the $B$-$B
 \otimes A^\mathrm{op}$-bimodule $B \otimes A$ with 
actions
$$
		  \mathrm{ad}(x)(b \otimes a)(y \otimes z):=
		  x_{(1)}by \otimes zaS(x_{(2)})
$$ 
is isomorphic to the $B$-$B \otimes
 A^\mathrm{op}$-bimodule $B \otimes A$ with actions
$$
			x(b
			\otimes a) \triangleleft (y
			\otimes z):=
			xby_{(1)} \otimes zaS^2(y_{(2)}).
$$
\end{lemma}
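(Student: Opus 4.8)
The plan is to write down an explicit $k$-linear isomorphism $\Phi\colon B\otimes A\to B\otimes A$ intertwining the two bimodule structures, in the same spirit as the map $\xi$ used in the proof of Lemma~\ref{gklem}. Concretely, I would try the ansatz
$$
\Phi(b\otimes a):=b_{(1)}\otimes aS(b_{(2)}),
$$
with candidate inverse $\Phi^{-1}(b\otimes a)=b_{(1)}\otimes aS^{-1}(b_{(2)})$ — here one uses that $S$ is bijective, which is part of the standing hypothesis, and that $\Delta(B)\subset B\otimes A$ so that $b_{(1)}\in B$. One should first check $\Phi\circ\Phi^{-1}=\Phi^{-1}\circ\Phi=\mathrm{id}$, which is a routine Sweedler computation using the antipode axioms and coassociativity.

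Next I would verify that $\Phi$ carries the left $B$-action: we need $\Phi(\mathrm{ad}(x)(b\otimes a))=x\cdot\Phi(b\otimes a)$, i.e.\ $\Phi(x_{(1)}b\otimes aS(x_{(2)}))$ should equal $x\cdot(b_{(1)}\otimes aS(b_{(2)}))=x b_{(1)}\otimes aS(b_{(2)})S^?(x_{(2)})$ according to the target action $x(b'\otimes a')\triangleleft(1\otimes1)$; expanding both sides with coassociativity and the antidistributivity $S(xy)=S(y)S(x)$ should make them agree. Similarly one checks compatibility with the right $B\otimes A^{\mathrm{op}}$-action: applying $\Phi$ to $(b\otimes a)(y\otimes z)=by\otimes zaS(x_{(2)})$-type elements and comparing with $(b_{(1)}\otimes aS(b_{(2)}))\triangleleft(y\otimes z)=b_{(1)}y_{(1)}\otimes zaS(b_{(2)})S^2(y_{(2)})$ — the $S^2$ on the right $B$-component is exactly what the conjugation by $S$ in $\Phi$ produces, since $\Phi$ moves the $y$ past an $S$ applied to $b_{(2)}$ and an extra $S$ gets wrapped around $y$, yielding $S^2$. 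The right $A^{\mathrm{op}}$-component (the $z$) commutes through untouched because $\Phi$ only alters the $B$-tensorand and the $S(b_{(2)})$ sits to the right of $a$.

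The main obstacle — really the only delicate point — is bookkeeping: keeping track of which copy of $S$ or $S^{-1}$ is needed where, and making sure that all intermediate Sweedler factors $b_{(1)},b_{(2)},\ldots$ that are hit by multiplication or by the left/right $B$-actions genuinely lie in $B$ (they do, since $B$ is a right coideal subalgebra, but one must invoke this at each step). Once the ansatz is pinned down this is a mechanical verification; I would present the map and its inverse explicitly and then state that the intertwining identities follow from coassociativity and the (anti)multiplicativity of $S$, leaving the bracket-chasing to the reader as is done for $\xi$ in Lemma~\ref{gklem}. If the first ansatz for $\Phi$ fails to give exactly $S^2$ on the $y_{(2)}$ slot, the fix is to adjust by composing with the tensor flip or by replacing $S$ with $S^{-1}$ in $\Phi$; the structure of the problem guarantees that one of these variants works.
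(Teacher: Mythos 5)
Your ansatz $\Phi(b\otimes a)=b_{(1)}\otimes aS(b_{(2)})$ is indeed an isomorphism between the two structures (it is precisely the inverse of the map the paper uses), but the two concrete claims you commit to about it are both false, and they are exactly where the content of the lemma lies. First, $b\otimes a\mapsto b_{(1)}\otimes aS^{-1}(b_{(2)})$ is not the inverse of $\Phi$: the composite sends $b\otimes a$ to $b_{(1)}\otimes aS(b_{(3)})S^{-1}(b_{(2)})$, and $b_{(1)}\otimes S(b_{(3)})S^{-1}(b_{(2)})=b\otimes 1$ would force $b_{(1)}S^{2}(b_{(2)})=\varepsilon(b)1$, which fails already for the generator $a$ of $\mathbb{C}_q[SL(2)]$ (take $B=A$). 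The correct partner is $\rho(b\otimes a)=b_{(1)}\otimes aS^{2}(b_{(2)})$: the identities $S(b_{(3)})S^{2}(b_{(2)})=S(S(b_{(2)})b_{(3)})=\varepsilon(b_{(2)})1$ and $S^{2}(b_{(3)})S(b_{(2)})=S(b_{(2)}S(b_{(3)}))=\varepsilon(b_{(2)})1$ show that $\rho$ and $\Phi$ are mutually inverse, with no use of $S^{-1}$ at all (the lemma as stated does not assume $S$ bijective).

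Second, the intertwining you plan to verify, $\Phi(\mathrm{ad}(x)(b\otimes a))=x\cdot\Phi(b\otimes a)$, is false: the left-hand side equals $x_{(1)}b_{(1)}\otimes aS(x_{(3)})S(b_{(2)})S(x_{(2)})$, while the right-hand side is $xb_{(1)}\otimes aS(b_{(2)})$ (in the second structure the left $B$-action is plain multiplication on the first tensorand, so no $S^{?}(x_{(2)})$ appears there), and nothing cancels. It is $\rho$ that carries the $\mathrm{ad}$-structure to the $\triangleleft$-structure: $\rho(x_{(1)}by\otimes zaS(x_{(2)}))=x_{(1)}b_{(1)}y_{(1)}\otimes zaS(x_{(3)})S^{2}(x_{(2)})S^{2}(b_{(2)})S^{2}(y_{(2)})=xb_{(1)}y_{(1)}\otimes zaS^{2}(b_{(2)})S^{2}(y_{(2)})$, using that $S^{2}$ is an algebra map and $S(x_{(3)})S^{2}(x_{(2)})=\varepsilon(x_{(2)})1$; equivalently, your $\Phi$ intertwines the structures in the opposite direction, which would also prove the lemma, but that is not the computation you describe. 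Neither of your fallback repairs helps: the tensor flip does not even map $B\otimes A$ to itself, and replacing $S$ by $S^{-1}$ in $\Phi$ yields a map that intertwines the two structures in neither direction. So as written the proof has a genuine gap: every verification is deferred to "bracket-chasing", and the explicit formulas offered in its place are incorrect; the missing point is precisely the $S^{2}$ appearing in the paper's isomorphism $\rho$.
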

\begin{proof}
The isomorphism is given explicitly by
$$
		  \rho : B \otimes A \rightarrow B \otimes A,\quad
		  b \otimes a \mapsto b_{(1)} \otimes a S^2(b_{(2)}).
$$
Its inverse is given by
$$
		  \rho^{-1} : b \otimes a \mapsto b_{(1)} \otimes a S(b_{(2)}), 
$$ 
and it follows straightforwardly from 
the Hopf algebra axioms that
\begin{equation*}
		  \rho (x_{(1)}by \otimes zaS(x_{(2)}))=
			x \rho (b
			\otimes a) \triangleleft (y
			\otimes z).\qedhere
\end{equation*}  
\end{proof}

Combining the lemmata gives:

\begin{thm}\label{gklem2}
Let $B \subset A$ be a right coideal subalgebra
and consider $B
 \otimes A$ as a
$B \otimes A^\mathrm{op}$-bimodule via
multiplication in $B \otimes
 A^{\mathrm{op}}$.
If the left $B$-module
$k$ admits a finitely generated
projective resolution, then there is an
 isomorphism
$$
		  H^\bullet(B,B \otimes A) \simeq
		  \mathrm{Ext}^\bullet_B(k,B)
		  \otimes A
$$
 of right $B \otimes
 A^\mathrm{op}$-modules, where $\mathrm{Ext}^\bullet_B(k,B)
\otimes A$ is a $B \otimes
 A^\mathrm{op}$-module via
$$
		  ([\varphi] \otimes a)(x \otimes y)
		  :=[\varphi]x_{(1)} \otimes yaS^2(x_{(2)}),\quad
		  x \in B,[\varphi] \in \mathrm{Ext}^\bullet_B(k,B),a,y \in A
$$ 
with the right $B$-action on $\mathrm{Ext}^\bullet_B(k,B)$
induced by right multiplication in $B$.
\end{thm}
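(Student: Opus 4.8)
The plan is to assemble Lemmas~\ref{gklem}, \ref{schgr} and \ref{blemma}; the only real work is to carry the module structures through the isomorphisms they supply. First I would apply Lemma~\ref{gklem} to the $B$-$A$-bimodule $M := B \otimes A$ whose left $B$-action is left multiplication in the first tensor factor and whose right $A$-action is right multiplication in the second, so that $\mathrm{ad}(M)$ is $B \otimes A$ with $\mathrm{ad}(x)(b \otimes a) = x_{(1)}b \otimes aS(x_{(2)})$. The cochain-level isomorphism $\xi$ constructed in the proof of Lemma~\ref{gklem} is post-composition of a cochain with the operator $m \mapsto m\,b^1_{(2)} \cdots b^n_{(2)}$ on $M$ (a right $A$-multiplication), and it identifies $(C^\bullet(B,M),\di)$, which computes $\mathrm{Ext}^\bullet_B(k,\mathrm{ad}(M))$, with $(C^\bullet(B,M),\bb)$, which computes $H^\bullet(B,M)$; hence $\xi$ intertwines these complexes equivariantly for any $k$-linear endomorphism of $M$ that is right $A$-linear and commutes with the left $B$-multiplication action. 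The right-multiplication action of $B \otimes A^\mathrm{op}$ on $M = B \otimes A$, namely $(b\otimes a)(y\otimes z) = by \otimes za$, consists of such endomorphisms — this is exactly the assertion in Lemma~\ref{schgr} that $B \otimes A$ with the adjoint left action and this right action is a $B$-$(B \otimes A^\mathrm{op})$-bimodule — and it moreover commutes with the $B$-bimodule structure defining $H^\bullet(B,M)$. Therefore the vector space isomorphism of Lemma~\ref{gklem} refines to an isomorphism $H^\bullet(B, B \otimes A) \simeq \mathrm{Ext}^\bullet_B(k, \mathrm{ad}(B \otimes A))$ of right $B \otimes A^\mathrm{op}$-modules, where on the right the action is induced by functoriality of $\mathrm{Ext}^\bullet_B(k,-)$ from the right $B \otimes A^\mathrm{op}$-module endomorphisms of the left $B$-module $\mathrm{ad}(B \otimes A)$.

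Next I would invoke the isomorphism $\rho$ of Lemma~\ref{schgr}. Since it is one of $B$-$(B \otimes A^\mathrm{op})$-bimodules, it identifies $\mathrm{Ext}^\bullet_B(k, \mathrm{ad}(B \otimes A))$, as a right $B \otimes A^\mathrm{op}$-module, with $\mathrm{Ext}^\bullet_B(k, N)$, where $N = B \otimes A$ now carries the plain left action $x(b \otimes a) = xb \otimes a$ and the right action $(b \otimes a) \triangleleft (y \otimes z) = by_{(1)} \otimes zaS^2(y_{(2)})$. As a left $B$-module $N$ is just $B \otimes_k A$ with $B$ acting in the first factor, so Lemma~\ref{blemma} applies with $R = M = B$, $S = L = k$ and $N = A$: indeed $A$ is flat over the field $k$, and $k$ admits a finitely generated projective resolution over $B$ by hypothesis, which gives a natural isomorphism $\mathrm{Ext}^\bullet_B(k, B \otimes A) \simeq \mathrm{Ext}^\bullet_B(k,B) \otimes A$. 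For each $y \otimes z$ the operator $-\triangleleft (y \otimes z)$ is left $B$-linear on $N$ (left and right actions of a bimodule commute), hence induces an endomorphism of the source; unwinding the canonical map $[\varphi] \otimes a \mapsto [\,p \mapsto \varphi(p) \otimes a\,]$ and using its naturality in the $A$-slot shows that this corresponds to the endomorphism $([\varphi] \otimes a)(x \otimes y) = [\varphi]x_{(1)} \otimes yaS^2(x_{(2)})$ of $\mathrm{Ext}^\bullet_B(k,B) \otimes A$, with $[\varphi]x_{(1)}$ given by right multiplication of $B$ on $\mathrm{Ext}^\bullet_B(k,B)$. Composing the three isomorphisms yields the theorem.

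The step I expect to be the main obstacle is the bookkeeping in the first paragraph: one must be precise about which $B$-bimodule structure on $B \otimes A$ enters $H^\bullet(B, B \otimes A)$, and then check carefully that the chain isomorphism $\xi$ of Lemma~\ref{gklem} really is equivariant for the right multiplication of $B \otimes A^\mathrm{op}$ — equivalently, that this multiplication commutes both with the right $A$-action appearing in the formula for $\xi$ and with the left $B$-actions entering $\di$ and $\bb$. Everything after that is either read off from Lemmas~\ref{schgr} and \ref{blemma} or is a routine naturality check, and the $S^2$-twist in the final formula is produced entirely by Lemma~\ref{schgr}.
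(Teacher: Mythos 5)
Your argument is correct and is essentially the paper's own proof: the paper likewise applies Lemma~\ref{gklem} with $M=B\otimes A$, notes that the cochain complexes and the maps $\xi,\xi^{-1}$ are right $B\otimes A^\mathrm{op}$-linear, and then composes with Lemma~\ref{schgr} and Lemma~\ref{blemma} (Lemma~\ref{wudi} is cited there only as the usual source of the finitely generated projective resolution of $k$, which your hypothesis supplies directly). Your additional bookkeeping --- the equivariance of $\xi$ under the right $B\otimes A^\mathrm{op}$-multiplication and the naturality in the $A$-slot of the map from Lemma~\ref{blemma} --- merely spells out what the paper asserts as ``clearly'' linear.
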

\begin{proof}
Apply Lemma~\ref{gklem} with 
$M=B \otimes A$. The cochain complexes and
 the isomorphisms $\xi,\xi^{-1}$
defined in its proof are clearly right
 $B \otimes A^\mathrm{op}$-linear in this case, so the lemma
 gives a right $B \otimes
 A^\mathrm{op}$-module isomorphism
\begin{equation}\label{koc}
		  H^\bullet(B,B \otimes A) \simeq 
		  \mathrm{Ext}^\bullet_B(k,\mathrm{ad}(B
		  \otimes A)),
\end{equation} 
where the right
$B \otimes A^\mathrm{op}$-action
on $\mathrm{Ext}^\bullet_B(k,\mathrm{ad}(B
\otimes A)$ is induced by right
 multiplication in $B
 \otimes A^\mathrm{op}$ (which commutes with the
 left $B$-action given by $\mathrm{ad}$).
Now apply  the Lemmata~\ref{schgr},~\ref{wudi} and~\ref{blemma} to get the isomorphisms
\begin{equation}\label{bratsche}
		  \mathrm{Ext}_B^\bullet(k,\mathrm{ad}(B \otimes A))
		  \simeq 
		  \mathrm{Ext}_B^\bullet(k,B \otimes A) \simeq
		  \mathrm{Ext}_B^\bullet(k,B) \otimes A.
\end{equation} 
%This implies that we have 
%for any finitely generated 
%left $B$-module $N$ functorial isomorphisms of right 
%$B \otimes A^\mathrm{op}$-modules
%$$
%		  \mathrm{Hom}_B(N,\mathrm{ad}(B \otimes A))
%		  \simeq
%		  \mathrm{Hom}_B(N,{}_\triangleright
%		  B \otimes A) \simeq
%		  \mathrm{Hom}_B(N,B) \otimes A,
%$$
%where the right $B \otimes
% A^\mathrm{op}$-action on 
%$\mathrm{Hom}_B(N,B) \otimes A$
%is as in the staement of the
%lemma (the fact that $N$ is finitely
% generated is needed to have the second
% isomorphism). Thus we also obtain isomorphisms
% of derived functors
%$$
%		  \mathrm{Ext}_B^\bullet(k,\mathrm{ad}(B \otimes A))
%		  \simeq \mathrm{Ext}_B^\bullet(k,B) \otimes A
%$$
%since $k$ was assumed to have a
% resolution by finitely generated
% projective modules.
Composing these isomorphisms
with (\ref{koc}) yields the claim.
\end{proof}

Theorem~\ref{main} is an easy consequence:

\begin{proof}[Proof of Theorem~\ref{main}]
Theorem~\ref{dirsum} gives
a $B$-trimodule decomposition  
$$
		  B \otimes A \simeq B \otimes (B \oplus
		  B^\perp)  \simeq B^e \oplus 
		  (B \otimes B^\perp),
$$
so we also have
$
		  H^n(B,B \otimes A) \simeq 
		  H^n(B,B^e) \oplus 
		  H^n(B,B \otimes B^\perp)
$ 
as right $B$-modules. 
Theorem~\ref{gklem2} and the
 Cohen-Macaulay assumption imply that 
$H^n(B,B^e)=0$ for $n \neq \mathrm{dim} (B)$, so
 Theorem~\ref{main} follows from Theorem~\ref{michel}.
\end{proof}

\subsection{$\omega$ as a Hopf bimodule}\label{omestru} 
The key step towards
Theorem~\ref{mainneu} is to turn
$\omega$ into an object in ${}_B \M^A_{B,S^2}$.
Recall that any right $A$-comodule $N$
is via
\begin{equation}\label{hit}
		  X.n:=n_{(0)}X(n_{(1)}),\quad X \in
		  A^\circ,n \in N
\end{equation}  
a left module over the Hopf algebra
$A^\circ$ of
linear functionals on $A$ that vanish
on an ideal of finite
codimension, see e.g.~\cite{sweedler}
for background. The $A^\circ$-modules 
of this form are traditionally called
rational.
We define
now an $A^\circ$-action on $C^\bullet(B,B \otimes A)$ that restricts to $C^\bullet(B,B^e)$
and commutes with the coboundary
operator $\bb$ and therefore induces an $A^\circ$-action
on $\omega$. While 
$C^\bullet(B,B^e)$ will not be rational
in general we will prove afterwards
that $\omega$ is.

In the definition of the searched for
$A^\circ$-action on $\varphi \in
C^n(B,B \otimes A)$ we denote the canonical  
$A^\circ \otimes A^\circ$-action on $B
\otimes A$ by
$$
		  (X \otimes Y) \triangleright (x \otimes y)= 
		  X.x \otimes Y.y,\quad X,Y \in
		  A^\circ,x \in B,y \in A,
$$ 
where the actions of $X,Y$ result as in
(\ref{hit}) from the 
$A$-coactions given by
the coproduct. This gets mixed with an action on the
arguments of $\varphi$:
\begin{align}\label{ahmad} 
 (X\varphi)(b^1,\ldots,b^n)
&:= (S^2(X_{(n+2)}) \otimes X_{(1)})
		  \triangleright \\
&\qquad \varphi (S(X_{(n+1)}).b^1,\ldots,
		  S(X_{(2)}).b^n)).\nonumber
\end{align}
It follows from the Hopf algebra axioms
that this defines a left
$A^\circ$-action, and in this way  
 $C^\bullet(B,B \otimes A)$ becomes a cochain
 complex of $A^\circ$-modules:
\begin{lemma}
One has $\bb(X \varphi)=X(\bb
\varphi)$ for all $X \in A^\circ,\varphi
 \in C^\bullet(B,B \otimes A)$.
\end{lemma}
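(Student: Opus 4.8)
The plan is to verify the identity directly, by evaluating both sides on an arbitrary tuple $(b^1,\ldots,b^{n+1})\in B^{\otimes(n+1)}$ and matching, one at a time, the $n+2$ summands that the Hochschild coboundary (\ref{sputzens}) produces on each side. Throughout I would use only the standard facts about $A^\circ$: the coproduct $X\mapsto X_{(1)}\otimes X_{(2)}$ is dual to the multiplication of $A$, so that on any algebra carrying the coproduct as an $A$-coaction (in particular on $A$ and on $B$) one has $X.(ab)=(X_{(1)}.a)(X_{(2)}.b)$, and $X.(Y.a)=(XY).a$ for the convolution product of $A^\circ$; the antipode $S$ of $A^\circ$ reverses products and coproducts, $S^2$ is a bialgebra endomorphism, and $X_{(1)}S(X_{(2)})=S(X_{(1)})X_{(2)}=\varepsilon(X)\mathbf 1$; and, crucially, that because $B$ is a right coideal subalgebra ($\Delta(B)\subset B\otimes A$) the action $S(X).b=b_{(1)}\,(S(X))(b_{(2)})$ of $A^\circ$ on $B$ again lands in $B$. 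I would also fix at the outset that in $H^\bullet(B,B\otimes A)$ the bimodule $B\otimes A$ carries the $B$-bimodule structure inherited from the $B\otimes A^{\mathrm{op}}$-module structure: the left $B$-action is by multiplication on the first (the $B$-) leg and the right $B$-action is by right multiplication on the second (the $A$-) leg.

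For the $n$ inner summands $(-1)^i\varphi(\ldots,b^ib^{i+1},\ldots)$ the match is purely a matter of reshuffling iterated coproducts. On the side of $X(\bb\varphi)$ the $i$-th summand has $\varphi$ evaluated, among its other arguments, at $(S(X_{(n+3-i)}).b^i)(S(X_{(n+2-i)}).b^{i+1})$, where $X_{(n+2-i)}$ and $X_{(n+3-i)}$ are consecutive slots of the $(n+3)$-fold coproduct of $X$. Coassociativity lets me merge them into a single slot $Z$ of the $(n+2)$-fold coproduct (with $Z_{(1)}=X_{(n+2-i)}$, $Z_{(2)}=X_{(n+3-i)}$); anti-comultiplicativity of $S$ gives $S(X_{(n+3-i)})\otimes S(X_{(n+2-i)})=S(Z)_{(1)}\otimes S(Z)_{(2)}$; and then multiplicativity of the action turns the product into $S(Z).(b^ib^{i+1})$. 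The remaining slots and the outer twist $S^2(X_{(n+3)})\otimes X_{(1)}$ relabel in the evident way, and one reads off exactly $(X\varphi)(b^1,\ldots,b^ib^{i+1},\ldots,b^{n+1})$.

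The two boundary summands are where the antipode really enters, and this is the step I expect to be the main obstacle. In the $0$-th summand, $\bb(X\varphi)$ has $b^1$ left-multiplying the $B$-leg, while $X(\bb\varphi)$ has instead $S(X_{(n+2)}).b^1$ left-multiplying it, after which the outer $S^2(X_{(n+3)})$ acts on that leg; pushing $S^2(X_{(n+3)})$ through this product by multiplicativity yields a factor $S^2(X_{(n+3)})_{(1)}\,S(X_{(n+2)})$ applied to $b^1$. Writing $S^2=S\circ S$, using that $S$ reverses products, and then applying the antipode identity to the two (now adjacent) slots $X_{(n+2)}$ and $(X_{(n+3)})_{(1)}$ collapses this factor to $\varepsilon(\,\cdot\,)\mathbf 1$; the counit then contracts, merging $X_{(n+2)}$ and $X_{(n+3)}$ into a single slot of the $(n+2)$-fold coproduct, the residual $S^2(X_{(n+3)})_{(2)}$ becoming $S^2$ of that slot, and what remains is precisely $b^1\cdot(X\varphi)(b^2,\ldots,b^{n+1})$. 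The $(n+1)$-th summand is the mirror image on the $A$-leg: there the raw $b^{n+1}$, resp.\ $S(X_{(2)}).b^{n+1}$, right-multiply the $A$-leg, the outer (un-antipoded) $X_{(1)}$ is pushed through, and the cancellation is a direct application of $X_{(1)}S(X_{(2)})=\varepsilon(X)\mathbf 1$ to the adjacent slots $X_{(1)},X_{(2)}$. It is exactly the occurrence of $S^2$ (rather than $S$) on the $B$-leg in (\ref{ahmad}) that makes the first of these two cancellations go through — on the $A$-leg no such correction is needed because the antipode identity applies there without it.

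The only genuine difficulty throughout is the bookkeeping of the iterated coproduct: in each of the three cases one must check that, after the merge (inner terms) or the antipode cancellation (boundary terms), one lands on precisely the $(n+2)$-slot expression of (\ref{ahmad}) for $C^n$, with the slots reassigned in the correct order to the arguments of $\varphi$ and to the two tensor legs of $B\otimes A$. Since the antipode identities act only on adjacent slots, this bookkeeping is forced, and the whole verification is determined by the Hopf algebra axioms; I would expect no further subtlety, and in particular the argument uses nothing about $B$ beyond its being a right coideal subalgebra (so that $S(X).b\in B$ and the one-sided $B$-multiplications on the two legs make sense) — no flatness hypothesis enters here.
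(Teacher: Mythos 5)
Your verification is correct and is essentially the paper's own argument: the paper also proves the lemma by a direct computation with the three multiplicativity identities for the rational actions, the antipode axiom applied to adjacent coproduct slots (where the $S^2$ on the $B$-leg is exactly what makes the left-multiplication boundary term collapse), carrying it out explicitly in degree $n=1$ with the general case analogous. Your summand-by-summand bookkeeping in arbitrary degree, including the observation that only the right coideal property of $B$ is used, matches that proof.
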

\begin{proof}
This is checked using 
that we have for $m \in B
\otimes A,
		  b,c \in B,X \in
		  A^\circ$
\begin{align}
 		  &X.(bc) =(X_{(1)}.b)(X_{(2)}.c),\nonumber\\ 
 & (X \otimes 1) \triangleright (bm)=
		  (X_{(1)}.b)((X_{(2)} \otimes 1)
 \triangleright m),\nonumber\\ 
& (1 \otimes X) \triangleright (mb)=
		  ((1 \otimes X_{(1)})
		  \triangleright
		  m)(X_{(2)}.b).\nonumber
\end{align} 
 We demonstrate the claim in
degree $n=1$, the general case is
analogous: 
\begin{align}
&\quad (X(\bb \varphi))(b,c) \nonumber\\ 
&= (S^2(X_{(4)}) \otimes X_{(1)}) \triangleright
		  (\bb\varphi(S(X_{(3)}).b,S(X_{(2)}) . c))
		  \nonumber\\ 
&= (S^2(X_{(4)}) \otimes X_{(1)}) \triangleright
		  ((S(X_{(3)}) . b)
		  \varphi(S(X_{(2)}) . c) \nonumber\\ 
&\quad -(\varphi(S(X_{(3)}) .b)(S(X_{(2)}) . c))
		  +\varphi(S(X_{(3)}) .b)(S(X_{(2)}) . c))
		  \nonumber\\
&= (S^2(X_{(4)}) \otimes X_{(1)}) \triangleright
		  ((S(X_{(3)}) . b)\varphi(S(X_{(2)}) . c))\nonumber\\ 
&\quad-(S^2(X_{(4)}) \otimes X_{(1)}) \triangleright 
		  (\varphi((S(X_{(3)}).b)(S(X_{(2)}) . c)))
		  +\nonumber\\ 
&\quad(S^2(X_{(4)}) \otimes X_{(1)}) \triangleright (\varphi(S(X_{(3)}) .
		  b)(S(X_{(2)}) . c))
		  \nonumber\\
&=  (S^2(X_{(4)})S(X_{(3)}) . b)((S^2(X_{(5)}) \otimes X_{(1)}) \triangleright
		  (\varphi(S(X_{(2)}) . c)))\nonumber\\ 
&\quad-(S^2(X_{(3)}) \otimes X_{(1)}) \triangleright 
		  (\varphi((S(X_{(2)})_{(1)}.b)(S(X_{(2)})_{(2)} . c)))
		  +\nonumber\\ 
&\quad((S^2(X_{(5)}) \otimes X_{(1)}) \triangleright (\varphi(S(X_{(4)}) .
		  b)))(X_{(2)}S(X_{(3)}) . c)
		  \nonumber\\
&= b ((S^2(X_{(3)}) \otimes X_{(1)}) \triangleright
		  \varphi(S(X_{(2)}) . c))
		  \nonumber\\ 
&\quad-(S^2(X_{(3)}) \otimes X_{(1)}) \triangleright (\varphi(S(X_{(2)}) .
		  (bc)))+\nonumber\\ 
&\quad((S^2(X_{(3)}) \otimes X_{(1)})\triangleright 
		  (\varphi(S(X_{(2)}).b)))c
		  \nonumber\\
&=  \bb(X \varphi)(b,c).\nonumber\qedhere
\end{align}
\end{proof}

Furthermore, we obviously have:
\begin{lemma}
For any right coideal subalgebra $B
 \subset A$, the canonical map
 $C^\bullet(B,B^e) \subset C^\bullet(B,B
 \otimes A)$ is an embedding of  complexes of $A^\circ$-modules.
\end{lemma}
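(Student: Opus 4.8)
The plan is to verify separately that the canonical inclusion is a morphism of cochain complexes and that it is $A^\circ$-linear; neither step requires any real computation, and the only point worth isolating is where the right coideal hypothesis on $B$ is used.

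First I would note that the inclusion $B^e=B\otimes B\hookrightarrow B\otimes A$, given by the identity on the first tensor factor and the embedding $B\subset A$ on the second, is a morphism of $B$-bimodules for the bimodule structures underlying the two Hochschild cochain complexes: in both cases the left action is $b(x\otimes y)=bx\otimes y$ and the right action is $(x\otimes y)b=x\otimes yb$, and these agree on $B\otimes B$ since the multiplication of $A$ restricts to that of $B$. As the standard cochain complex $C^\bullet(B,-)$ with coboundary $\bb$ from (\ref{sputzens}) is functorial in the coefficient bimodule and $\mathrm{Hom}_k(B^{\otimes n},-)$ sends injections to injections, the induced map $C^\bullet(B,B^e)\rightarrow C^\bullet(B,B\otimes A)$ is an injective chain map, and we identify $C^\bullet(B,B^e)$ with its image.

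Next I would show that the $A^\circ$-action (\ref{ahmad}) on $C^\bullet(B,B\otimes A)$ leaves this subspace invariant. Because $B$ is a right coideal of $A$, we have $\Delta(B)\subset B\otimes A$, so the rational $A^\circ$-module structure $X.a=a_{(1)}X(a_{(2)})$ on $A$ induced by the coproduct restricts to $B$: for $a\in B$ one has $a_{(1)}\in B$, hence $a_{(1)}X(a_{(2)})\in B$. It follows that $B\otimes B$ is an $A^\circ\otimes A^\circ$-submodule of $B\otimes A$ under $(X\otimes Y)\triangleright(x\otimes y)=X.x\otimes Y.y$. Now if $\varphi\in C^n(B,B^e)$, then in (\ref{ahmad}) the arguments $S(X_{(n+1)}).b^1,\ldots,S(X_{(2)}).b^n$ remain in $B$, so $\varphi(S(X_{(n+1)}).b^1,\ldots,S(X_{(2)}).b^n)\in B\otimes B$, and applying $(S^2(X_{(n+2)})\otimes X_{(1)})\triangleright$ keeps the result in $B\otimes B$ by the previous remark. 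Hence $X\varphi\in C^n(B,B^e)$, the subspace $C^\bullet(B,B^e)$ is $A^\circ$-stable, and the $A^\circ$-action on it is by definition this restriction.

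Combining the two steps, and using that $\bb$ commutes with the $A^\circ$-action by the preceding lemma, the inclusion $C^\bullet(B,B^e)\hookrightarrow C^\bullet(B,B\otimes A)$ is an injective chain map that is $A^\circ$-linear, which is the claimed statement. I do not expect any genuine obstacle: the single thing to keep track of is that the $A^\circ$-action a priori uses the full coproduct of $A$ in the second tensor slot, and it is exactly the right coideal property $\Delta(B)\subset B\otimes A$ that confines the outcome to $B\otimes B$.
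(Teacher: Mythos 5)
Your proposal is correct and spells out exactly the routine verification the paper leaves implicit (the lemma is stated there with "we obviously have" and no proof): the inclusion is a map of Hochschild complexes because $B^e\subset B\otimes A$ is a morphism of the underlying $B$-bimodules, and the action (\ref{ahmad}) preserves $C^\bullet(B,B^e)$ because the right coideal property $\Delta(B)\subset B\otimes A$ makes $B$ an $A^\circ$-submodule of $A$, so both the arguments and both tensor slots of the values stay in $B$. Nothing is missing.
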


Thus we obtain an $A^\circ$-action on
$H^\bullet(B,B^e)$ and the canonical map to 
$H^\bullet(B,B \otimes A)$ is
$A^\circ$-linear. Our final aim is to
prove that these two $A^\circ$-modules
are for a smooth and Gorenstein quantum
homogeneous space rational, and that we indeed have
$\omega \in {}_B \M^A_{B,S^2}$.

\begin{lemma}\label{geschrei}
Let $B \subset A$
be a smooth right coideal subalgebra
and assume $\varepsilon|_B$ is Gorenstein. Let $ \chi : B \rightarrow k$
be the character defined by the
right $B$-action on  
$\mathrm{Ext}^{\mathrm{dim}(B)}_B(k,B) \simeq k$
and define
the $k$-algebra homomorphism
\begin{equation}\label{emma}
		  \sigma : B \rightarrow A,\quad \sigma (x) := S^2(\chi(x_{(1)})x_{(2)}).
\end{equation} 
Then there are isomorphisms of
 $A$-$B$-bimodules
and $A^\circ$-modules
$$
		  H^n(B,B \otimes A) \simeq \left\{
 \begin{array}{ll}
0 \quad & n \neq \mathrm{dim}(B),\\
A_\sigma \quad & n=\mathrm{dim}(B),
 \end{array}\right.
$$
where $A^\circ$ acts via the canonical
 action $X.a:=a_{(1)}X(a_{(2)})$ on $A_\sigma$.
\end{lemma}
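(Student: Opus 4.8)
The plan is to combine the computation of $H^\bullet(B,B\otimes A)$ as a $B\otimes A^{\mathrm{op}}$-module from Theorem~\ref{gklem2} with the Gorenstein hypothesis, and then to identify the resulting $A$-$B$-bimodule explicitly as $A_\sigma$. First, since $B$ is smooth the left $B$-module $k$ admits a finitely generated projective resolution (use Lemma~\ref{wudi}: a finitely generated projective $B^e$-resolution of $B$ of finite length, tensored over $B$ with $k$, gives one for $k$), so Theorem~\ref{gklem2} applies and yields
$$
H^\bullet(B,B\otimes A)\simeq \mathrm{Ext}^\bullet_B(k,B)\otimes A
$$
as right $B\otimes A^{\mathrm{op}}$-modules, with the module structure spelled out there. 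The Cohen-Macaulay part of the Gorenstein hypothesis forces $\mathrm{Ext}^n_B(k,B)=0$ for $n\neq \mathrm{dim}(B)$, giving the vanishing in all degrees $n\neq\mathrm{dim}(B)$; and the Gorenstein part gives $\mathrm{Ext}^{\mathrm{dim}(B)}_B(k,B)\simeq k$ as a vector space, with right $B$-action the character $\chi$ of the statement. So in top degree the right $B\otimes A^{\mathrm{op}}$-module is $k_\chi\otimes A$, where $[\varphi]\otimes a$ carries the action $([\varphi]\otimes a)(x\otimes y)=\chi(x_{(1)})\,[\varphi]\otimes yaS^2(x_{(2)})$.

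Next I would rewrite this one-dimensional-times-$A$ module intrinsically. Collapsing the $k$-factor, it becomes $A$ with right $B$-action $a\blacktriangleleft x := a\,S^2(\chi(x_{(1)})x_{(2)}) = a\,\sigma(x)$ — i.e.\ precisely $A_\sigma$ — together with the residual left $B$-action (from the $x\otimes1$ part: $x$ acts, after the collapse, still via $\chi(x_{(1)})$ on the scalar and $yaS^2(x_{(2)})$ on $A$, which again produces right multiplication by $\sigma(x)$ on $A$, commuting with the left $A$-action) and the left $A$-action $y$ by left multiplication. One has to check $\sigma$ is an algebra homomorphism $B\to A$: this is a direct computation from $\chi$ being a character and $S^2$ an algebra endomorphism of $A$, using that $B$ is a subalgebra. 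Thus as an $A$-$B$-bimodule $H^{\mathrm{dim}(B)}(B,B\otimes A)\simeq A_\sigma$, and this identification is functorial/canonical enough that it is visibly the one induced by the isomorphisms of Theorem~\ref{gklem2}.

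It remains to track the $A^\circ$-action. Here one uses that the isomorphisms $\xi,\xi^{-1}$ of Lemma~\ref{gklem} and the comparison isomorphisms in Theorem~\ref{gklem2} were already observed to be right $B\otimes A^{\mathrm{op}}$-linear, and one checks they are compatible with the $A^\circ$-action \eqref{ahmad} on $C^\bullet(B,B\otimes A)$; transporting that action through the identification $\mathrm{Ext}^\bullet_B(k,B)\otimes A\simeq k_\chi\otimes A\simeq A_\sigma$ one finds that $X\in A^\circ$ acts only through the $A$-factor, by the canonical rational action $X.a=a_{(1)}X(a_{(2)})$ — the contributions of \eqref{ahmad} coming from the arguments $S(X_{(i)}).b^j$ and the $S^2(X_{(n+2)})$-leg get absorbed into the $B$-factor, where they act by a scalar via $\chi$, while telescoping of the remaining $X$-legs (using $\varepsilon(X_{(i)})$ and the antipode axioms, exactly as in the preceding lemma's computation) leaves only $X_{(1)}$ acting on $A$. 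I expect the main obstacle to be exactly this last bookkeeping: verifying that the somewhat intricate action \eqref{ahmad}, after passing to cohomology and through all the identifications, really degenerates to the naive rational $A$-coaction on $A_\sigma$, and in particular that nothing survives on the $\mathrm{Ext}^\bullet_B(k,B)$-factor beyond the scalar $\chi$. This is a Sweedler-index computation of the same flavour as the proof that $\bb(X\varphi)=X(\bb\varphi)$ already given, so it is routine but needs care; everything else is assembling results already in hand.
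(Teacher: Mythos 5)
Your proposal is correct and takes essentially the same route as the paper: apply Theorem~\ref{gklem2} together with the Gorenstein hypothesis to get the vanishing and the identification $\mathrm{Ext}^{\mathrm{dim}(B)}_B(k,B)\otimes A\simeq A_\sigma$ as $A$-$B$-bimodules, then transport the $A^\circ$-action (\ref{ahmad}) through the isomorphisms of Lemma~\ref{gklem} and Theorem~\ref{gklem2}. Only your description of the final bookkeeping is slightly off: nothing is ``absorbed via $\chi$'' -- conjugation by $\xi$ already removes the action on the arguments $b^i$, and conjugation by $\rho$ from Lemma~\ref{schgr} removes the $S^2$-leg, so at the cochain level the transported action is literally $(1\otimes X)\triangleright$ on the coefficients, i.e.\ the canonical rational action on the $A$-factor alone, with $\chi$ entering only through the bimodule identification $\mathrm{Ext}^{\mathrm{dim}(B)}_B(k,B)\simeq k$.
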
 
\begin{proof}
The claim about $A$-$B$-bimodules is a straightforward application of
Theorem~\ref{gklem2}.    
One then has to transport the
 $A^\circ$-action on $C^\bullet(B,B
 \otimes A)$ though the used
 isomorphisms: conjugating it by $\xi$
from Lemma~\ref{gklem}
gives an $A^\circ$-action on the cochain complex
$(C^\bullet(B,B \otimes A),\di)$ that is
given by
$$
		  (X \blacktriangleright \varphi)(b^1,\ldots,b^n):=
		  (S^2(X_{(2)}) \otimes X_{(1)}) \triangleright  
		  \varphi (b^1,\ldots,b^n),
$$
 so this action is entirely induced from
 an action on the coefficient
 bimodule. Conjugating this action with
$\rho$ from Lemma~\ref{schgr}
gives the action
$$
		  (X.\varphi)(b^1,\ldots,b^n):=
		  (1 \otimes X) \triangleright 
		  \varphi (b^1,\ldots,b^n),
$$
that is, in the identifications
 (\ref{koc}) and
 (\ref{bratsche}) in the proof of Theorem~\ref{gklem2} 
the original $A^\circ$-action on
 $H^{\mathrm{dim} (B)}(B,B \otimes A)$ induced by
 (\ref{ahmad}) is
 transformed into the one on
$\mathrm{Ext}^{\mathrm{dim}(B)} _B(k,B) \otimes A$ where
 $A^\circ$ acts simply on the second
 tensor component $A$ in the canonical way. 
\end{proof}

In particular, $H^{\mathrm{dim} (B)}(B,B
\otimes A)$ is a
rational $A^\circ$-module, and
hence so is any $A^\circ$-submodule (\cite{sweedler},
Theorem~2.1.3.a). Furthermore,
$A_\sigma$ and hence any 
$B$-subbimodule and
$A$-subcomodule is an object
in ${}_B \M^A_{B,S^2}$. This gives:

\begin{cor}\label{goedel}
If $B \subset A$ is a smooth quantum
 homogeneous space and 
$\varepsilon|_B$ is Gorenstein, then
$\omega=H^{\mathrm{dim}(B)}(B,B^e)$
becomes through the embedding 
$$\omega=H^{\mathrm{dim} (B)}(B,B^e) 
 \rightarrow H^{\mathrm{dim} (B)}(B,B
 \otimes A) \simeq A_\sigma
$$ 
an object in ${}_B \M^A_{B,S^2}$.
\end{cor}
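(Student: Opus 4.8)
The plan is to assemble the Corollary directly from the three structural facts established just before it: the $A^\circ$-equivariant embedding $H^\bullet(B,B^e)\hookrightarrow H^\bullet(B,B\otimes A)$ (the second unnamed lemma of the subsection), the identification $H^{\mathrm{dim}(B)}(B,B\otimes A)\simeq A_\sigma$ as $A$-$B$-bimodule and $A^\circ$-module (Lemma~\ref{geschrei}), and the remark that $\omega:=H^{\mathrm{dim}(B)}(B,B^e)$ inherits all three structures in question (left $B$-module via $\mathrm{ad}$, right $B$-module, $A^\circ$-module) from the cochain complex $C^\bullet(B,B^e)\subset C^\bullet(B,B\otimes A)$. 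So first I would record that, by Lemma~\ref{geschrei}, $\omega$ embeds as an $A^\circ$-submodule of the rational $A^\circ$-module $A_\sigma$; hence $\omega$ is itself rational by \cite{sweedler}, Theorem~2.1.3.a, and a rational $A^\circ$-module is the same thing as a right $A$-comodule. This is what promotes the $A^\circ$-action on $\omega$ to a genuine $A$-coaction, so that $\omega$ becomes an object of ${}_B\M^A$ once one checks the Hopf-module compatibility.

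Second, I would verify the compatibilities that identify the three actions on $\omega$ with the ones packaged in the definition of ${}_B\M^A_{B,S^2}$. The left $B$-action on $\omega$ is the one induced by $\mathrm{ad}$ on $C^\bullet(B,B\otimes A)$; the right $B$-action is that induced by right multiplication in $B^e$; and the $A$-comodule structure is the rational $A^\circ$-action of (\ref{ahmad}). By Lemma~\ref{geschrei}, under the isomorphism $H^{\mathrm{dim}(B)}(B,B\otimes A)\simeq A_\sigma$ these become exactly: left multiplication in $A$ (restricted along $B\subset A$ through $\mathrm{ad}$, which on $A_\sigma$ is $b\cdot a=b_{(1)}aS(b_{(2)})$ — but this is the $\mathrm{ad}$-action transported, not an issue since we only need it to be $B$-linear for the Hopf module axiom), the twisted right action $a\cdot b=a\sigma(b)=aS^2(\chi(b_{(1)})b_{(2)})$, and the canonical coaction $a\mapsto a_{(1)}\otimes a_{(2)}$. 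The coaction is $A$-colinear over the left $B$-action because the comultiplication of $A$ intertwines $\mathrm{ad}$ on the source with $\mathrm{ad}\otimes$(right mult.) on $A\otimes A$; and the defining relation (\ref{britzel}) with $\tau=S^2$ holds for $A_\sigma$ because $\Delta(a\sigma(b))=a_{(1)}\sigma(b)_{(1)}\otimes a_{(2)}\sigma(b)_{(2)}$ and $\sigma(b)_{(1)}\otimes\sigma(b)_{(2)}=b_{(1)}\otimes S^2(\chi(b_{(2)})b_{(3)})=b_{(1)}\otimes\sigma(b_{(2)})$ — i.e. $(\mathrm{id}\otimes\pi)\circ\Delta\circ\sigma$ feeds precisely the $S^2$-twisted right $A$-module structure on $C=A/B^+A$ used in (\ref{britzel}). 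Since every $B$-subbimodule-and-$A$-subcomodule of an object of ${}_B\M^A_{B,S^2}$ is again such an object, $\omega$ lands in ${}_B\M^A_{B,S^2}$.

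The main obstacle is bookkeeping rather than a genuine difficulty: one has to make sure that the chain of conjugations in the proof of Lemma~\ref{geschrei} (by $\xi$ from Lemma~\ref{gklem}, then by $\rho$ from Lemma~\ref{schgr}) carries the left $B$-module structure, the right $B\otimes A^{\mathrm{op}}$-module structure, and the $A^\circ$-action on $C^\bullet(B,B\otimes A)$ simultaneously and compatibly, so that after transport all three structures on $\omega$ agree with those on $A_\sigma$ \emph{as a single package}, not merely one at a time. Concretely, one must confirm that $\xi$ and $\rho$ are not only $k$-linear isomorphisms of complexes but intertwine the right $B$-actions (which they do, being right $B\otimes A^{\mathrm{op}}$-linear, as noted in the proof of Theorem~\ref{gklem2}) and that the $\mathrm{ad}$-type left $B$-action is transported to the left $A$-action on $A_\sigma$ restricted through $\mathrm{ad}$ — which is exactly Lemma~\ref{schgr}. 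Once this compatibility is in hand, the Corollary follows with no further computation: rationality gives the $A$-coaction, (\ref{britzel}) with $\tau=S^2$ gives membership in $\M^C_{B,S^2}$ at the level of $A_\sigma$, and passing to the sub-object $\omega$ preserves everything.
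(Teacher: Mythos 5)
Your proposal follows essentially the same route as the paper: the $A^\circ$-linear embedding of $\omega$ into $H^{\mathrm{dim}(B)}(B,B\otimes A)\simeq A_\sigma$ provided by Lemma~\ref{geschrei} and the preceding lemmas, rationality of $A^\circ$-submodules of rational modules via \cite{sweedler}, Theorem~2.1.3.a, and the observation that $A_\sigma$, and hence every $B$-subbimodule and $A$-subcomodule of it, is an object of ${}_B \M^A_{B,S^2}$. One Sweedler-notation slip in your verification of (\ref{britzel}): since $S^2$ is a coalgebra map and $\chi$ a character, $\Delta(\sigma(b))=\sigma(b_{(1)})\otimes S^2(b_{(2)})$, not $b_{(1)}\otimes\sigma(b_{(2)})$, and it is the former identity that makes $A_\sigma$ satisfy (\ref{britzel}) with $\tau=S^2$; the conclusion is unaffected.
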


This allows us to describe
$\omega$ finally as follows using the
canonical projection $ \pi : A \rightarrow
C=A/B^+A$: 

\begin{thm}\label{schmaerz}
Let $B \subset A$ be a smooth quantum
 homogeneous for which
 $\varepsilon|_B$ is Gorenstein, and let
 $\chi$ be the character on $B$ defined
 by its action on
 $\mathrm{Ext}^{\mathrm{dim} (B)}_B(k,B)
 \simeq k$.
Then there exists a group-like $g \in
 C=A/B^+A$ with
$$
		  \omega \simeq \{a \in A_\sigma\,|\,\pi
 (a_{(1)}) \otimes a_{(2)}=g \otimes
 a\},\quad
 \sigma (b)=\chi (b_{(1)})S^2(b_{(2)})
$$ 
as an object of ${}_B \M^A_{B,S^2}$, and we
 have for all $b \in B$
\begin{equation}\label{knights}
					g \sigma (b)=\chi (b)g,
\end{equation} 
where $g \sigma (b)$ is defined using
 the right $A$-action on $C=A/B^+A$.
\end{thm}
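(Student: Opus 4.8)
The plan is to leverage Corollary~\ref{goedel}, which already identifies $\omega$ as an object of ${}_B\M^A_{B,S^2}$ sitting inside $A_\sigma$. Since $B\subset A$ is faithfully flat, the equivalence ${}_B\M^A_{B,S^2}\simeq\M^C_{B,S^2}$ from Section~\ref{qhs} applies; under the functor $M\mapsto M/B^+M$ (see (\ref{drueckt})) we obtain a nonzero right $C$-comodule and right $B$-module $\omega/B^+\omega$. The first task is to show this space is one-dimensional over $k$. This should follow from the Gorenstein hypothesis: by Theorem~\ref{gklem2} combined with the decomposition $B\otimes A\simeq B^e\oplus(B\otimes B^\perp)$, the left $B$-module structure underlying $\omega$ is the ``$k$-part'' of $\mathrm{Ext}^{\mathrm{dim}(B)}_B(k,B)\otimes A\simeq k\otimes A$; taking the quotient by $B^+$ on the appropriate side picks out exactly a one-dimensional piece, the typical fibre. (Concretely, one computes $\omega/B^+\omega$ using the isomorphism of Corollary~\ref{goedel} and the fact that $A_\sigma/B^+A_\sigma\simeq C$ as a right $C$-comodule, then identifies which line the $k$-coefficient singles out.)

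Once $\omega/B^+\omega$ is known to be a one-dimensional right $C$-comodule, the coassociativity of the $C$-coaction forces the coaction to be $v\mapsto v\otimes g$ for a unique $g\in C$, and counitality forces $\varepsilon_C(g)=1$ while coassociativity forces $\Delta_C(g)=g\otimes g$; hence $g$ is group-like. Transporting back through the equivalence, $\omega$ corresponds to the cotensor product $k_g\,\Box_C A$ where $k_g$ is the one-dimensional $C$-comodule with coaction $v\mapsto v\otimes g$; by the explicit description of $\Box_C$ this is precisely $\{a\in A\mid \pi(a_{(1)})\otimes a_{(2)}=g\otimes a\}$. That this isomorphism is compatible with the extra right $B$-action (the one satisfying (\ref{britzel}) with $\tau=S^2$) is built into the equivalence ${}_B\M^A_{B,S^2}\simeq\M^C_{B,S^2}$, so the description holds as objects of ${}_B\M^A_{B,S^2}$, with $\sigma$ the map already recorded in (\ref{emma}).

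Finally, the relation (\ref{knights}) is obtained by examining how the extra right $B$-action descends to the one-dimensional quotient $\omega/B^+\omega$. On $\omega$ the right $B$-action is governed by $\sigma$ through the identification with $A_\sigma$, i.e.\ $a\blacktriangleleft b$ corresponds (modulo the left $B$-action) to multiplication by $\sigma(b)$ in the sense of (\ref{britzel}); passing to $\omega/B^+\omega$ this becomes the right $A$-action of $C$, giving $g\cdot\sigma(b)$ on one side. On the other hand the character $\chi$ is by definition the right $B$-action on the one-dimensional space $\mathrm{Ext}^{\mathrm{dim}(B)}_B(k,B)$, which is exactly the $k$-coefficient that survives in $\omega/B^+\omega$; so the same right $B$-action is multiplication by $\chi(b)$, giving $\chi(b)g$. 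Equating the two expressions yields (\ref{knights}). The main obstacle is the bookkeeping in the first paragraph: one must carefully track the several module structures (the ``adjoint'' left $B$-action, the right $B$-action from multiplication in $B$, the right $A$-action, and the $A^\circ$-action) through the chain of isomorphisms in Theorem~\ref{gklem2} and Corollary~\ref{goedel}, and verify that quotienting by $B^+$ on the correct side really produces a one-dimensional comodule rather than something larger; everything after that is formal consequence of faithful flatness and the structure of group-like elements.
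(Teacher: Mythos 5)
Your second and third paragraphs are essentially right and match the paper: once one knows that $N:=\omega/B^+\omega$ is a one-dimensional object of $\M^C_{B,S^2}$ with right $B$-action $\chi$, the coaction is necessarily $v\mapsto v\otimes g$ with $g$ group-like, $\omega\simeq N\Box_C A$ gives the stated subset of $A_\sigma$, and the compatibility condition (\ref{britzel}) with $\tau=S^2$ on a one-dimensional comodule is literally (\ref{knights}). The genuine gap is the first paragraph: the one-dimensionality of $\omega/B^+\omega$ (and the identification of its right $B$-action with $\chi$) is asserted, not proved, and the route you sketch cannot deliver it. The isomorphism $H^{\mathrm{dim}(B)}(B,B\otimes A)\simeq\mathrm{Ext}^{\mathrm{dim}(B)}_B(k,B)\otimes A\simeq A_\sigma$ of Theorem~\ref{gklem2}/Lemma~\ref{geschrei} is an isomorphism of right $B\otimes A^{\mathrm{op}}$-modules (and $A^\circ$-modules); it tells you nothing about \emph{where} the summand $\omega=H^{\mathrm{dim}(B)}(B,B^e)$ sits inside $A_\sigma$, so there is no ``$k$-part'' or ``line singled out by the $k$-coefficient'' to point at. Using the embedding of Corollary~\ref{goedel} together with exactness of Takeuchi's equivalence you only learn that $\omega/B^+\omega$ injects into $A_\sigma/B^+A_\sigma\simeq C$, i.e.\ that it is some left coideal of $C$; left coideals need not be one-dimensional, and nothing in your argument uses the Gorenstein hypothesis at the point where it must enter.

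The paper closes exactly this gap by a different mechanism: write $\omega\simeq N\Box_C A$ and compute $A\otimes_B\omega$ in two ways. On one hand, mixed associativity of $\Box_C$ and $\otimes_B$ (Theorem 5.8 of \cite{bobr}, using right flatness of $A$) plus the Galois isomorphism $A\otimes_B A\simeq C\otimes A$ give $A\otimes_B\omega\simeq N\Box_C(C\otimes A)\simeq N\otimes A$, so $N\simeq(A\otimes_B\omega)/A^+(A\otimes_B\omega)$. On the other hand, flat base change for $\mathrm{Ext}$ (Lemma~\ref{blemma}, using smoothness and flatness of $A$ over $B$) gives $A\otimes_B\omega\simeq\mathrm{Ext}^{\mathrm{dim}(B)}_{B^e}(B,B\otimes A)\simeq A_\sigma$, and dividing by $A^+$ shows $N\simeq\mathrm{Ext}^{\mathrm{dim}(B)}_B(k,B)\simeq k$ with right $B$-action $\chi$ — this is where Gorenstein is actually used. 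If you want to salvage your proposal, you must replace your first paragraph by an argument of this kind (or an equivalent computation of $\omega/B^+\omega$); as written, the key step is missing.
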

\begin{proof}
Theorem~\ref{dirsum} and the discussion
at the end of Section~\ref{qhs} tell
 that $\omega \in
 {}_B \M^A_{B,S^2}$ is of the form 
$N \Box_C A$ for some $N \in
 \M^C_{B,S^2}$. It follows that 
as a special case of (the proof of) 
Theorem~5.8 in \cite{bobr} there are
isomorphisms of $A$-$B$-bimodules 
$$
		  A \otimes_B \omega \simeq
		  A \otimes_B (N \Box_C A) \simeq
		  N \Box_C (A \otimes_B A) \simeq
 N \Box_C (C \otimes A) \simeq
		  N \otimes A,
$$
where the left $A$-action on $N \otimes
 A$ is given by multiplication in $A$
 and the right $B$ action on $N \otimes
 A$ is $(n \otimes a)b:=nb_{(1)} \otimes
 S^2(b_{(2)})$.
The second isomorphism (the mixed associativity of
 $\Box_C$ and $\otimes_B$) uses
 the right flatness of $A$ and
 the third is the Galois
 isomorphism for the algebra extension $B
 \subset A$ which is explicitly given by 
$$A \otimes_B
 A \rightarrow C \otimes A,\quad
 x \otimes_B y \mapsto \pi (y_{(1)})
 \otimes xy_{(2)}.
$$ 
It follows that there is a right $B$-linear
 isomorphism
$$
		  N \simeq (A \otimes_B
 \omega)/A^+ (A \otimes_B \omega),\quad
 A^+=\mathrm{ker}\, \varepsilon.
$$
But we also have $A$-$B$-bimodule
isomorphisms 
$$
		  A \otimes_B \omega=
 A \otimes_B \mathrm{Ext}^{\mathrm{dim} (B)}_{B^e}(B,B \otimes B) \simeq 
		  \mathrm{Ext}^{\mathrm{dim} (B)}_{B^e}(B,B \otimes
 A) \simeq A_\sigma
$$
by Lemmata~\ref{blemma}
 and~\ref{geschrei}. Together this shows
 that as $B$-modules we have
$$
		  N \simeq
 \mathrm{Ext}^{\mathrm{dim}(B)}_B(k,B),
$$
and a coaction on the ground field is
 given by a group-like element $g \in C$ as
$$
		  k \ni \lambda \mapsto \lambda
 \otimes g \in k \otimes C 
$$
that has to obey (\ref{knights}) in
 order to define an object in $\M^C_{B,S^2}$.
The result follows now by the definition of
 $N \Box_C A$.
\end{proof}
 
\subsection{The Hopf-Galois case}\label{hoga} 
As we have recalled in the introduction,
the assumption $B^+A=A B^+$
means that $\pi : A \rightarrow C=A/B^+A$ is a Hopf
algebra quotient. Hence $\M^C$
is a monoidal category, where $M \otimes N$
is for $N,M \in \M^C$ the tensor product
over $k$ equipped with the coaction
$$
		  m \otimes n \mapsto m_{(0)} \otimes
		  n_{(0)} \otimes m_{(1)}n_{(1)}.
$$
Furthermore, any $M \in \M^C$ is
canonically an object in
$\M^C_{B,\mathrm{id}}$ if $B$ acts
trivially (through $\varepsilon$) from
the right. Hence
$M \Box_C A$ is canonically an 
object in ${}_B \M^A_B:={}_B \M^A_{B,\mathrm{id}}$
with $B$-bimodule structure 
\begin{equation}\label{assbim}
		  x (m \otimes a)y:=m \otimes
		  xay,\quad m \in M,a \in A,x,y
		  \in B,
\end{equation} 
and  
with respect to this
bimodule structure we have (this
generalises to any
faithfully flat Galois extension of an
algebra $B$ by a Hopf algebra $C$) 
\begin{equation}\label{wuetend}
		  (M \Box_C A) \otimes_B (N \Box_C A) \simeq
		  (M \otimes N) \Box_C A
\end{equation} 
as $B$-bimodules. Any group-like element
$g$ of $C$ is now invertible with
inverse $g^{-1}=S(g)$, and
Theorem~\ref{schmaerz} immediately gives:
\begin{cor}\label{qaz}
Retain all assumptions and notation
from Theorem~\ref{schmaerz} and
assume in addition $AB^+=B^+A$. Then
$\sigma (B) \subset B$, and if we consider
$$
		  \bar \omega:=\{a \in A\,|\,\pi
 (a_{(1)}) \otimes a_{(2)}=g^{-1}
 \otimes a\}
$$
as a $B$-bimodule via (\ref{assbim}), then we have
a $B$-bimodule isomorphism
$$
		  \bar \omega \otimes_B \omega
 \simeq B_\sigma.
$$
\end{cor}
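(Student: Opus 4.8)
The plan is to obtain both assertions at once from the monoidal isomorphism (\ref{wuetend}) together with Corollary~\ref{goedel}. Since $AB^+=B^+A$, the space $C=A/B^+A$ is a Hopf algebra, $\pi$ is a Hopf algebra surjection, $g$ is invertible with $g^{-1}=S(g)$, and, writing $\delta(a):=\pi(a_{(1)})\otimes a_{(2)}$ for the left $C$-coaction, $\delta$ is an algebra map $A\to C\otimes A$ whose space of coinvariants is $B=k\,\Box_C A$ (Theorem~\ref{dirsum}). For a group-like $h\in C$ let $kh\in\M^C$ denote the one-dimensional comodule with coaction $v\mapsto v\otimes h$; then $kh\,\Box_C A=\{a\in A\mid\delta(a)=h\otimes a\}$, with the bimodule structure (\ref{assbim}). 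Thus $\bar\omega=kg^{-1}\Box_C A$, and by Theorem~\ref{schmaerz} the underlying $k$-space, left $B$-module and right $A$-comodule of $\omega$ is $\omega':=kg\,\Box_C A$; the only difference is that the right $B$-action on $\omega$ is the restriction of the $A_\sigma$-action $a\blacktriangleleft x=a\sigma(x)$, not right multiplication in $A$.

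First I would use multiplication in $A$ as a pairing. Since $\bar\omega$ carries the structure (\ref{assbim}) and the left $B$-action on $\omega$ agrees with that on $\omega'$, the assignment $\bar a\otimes a\mapsto \bar a\,a$ descends to $\bar\omega\otimes_B\omega$, whose underlying abelian group (and left $B$-module) coincides with that of $\bar\omega\otimes_B\omega'$. Applying (\ref{wuetend}) with $M=kg^{-1}$, $N=kg$ and using $kg^{-1}\otimes kg\simeq k$ in $\M^C$ (because $g^{-1}g=1$) gives a $B$-bimodule isomorphism $\bar\omega\otimes_B\omega'\simeq (kg^{-1}\otimes kg)\,\Box_C A=k\,\Box_C A=B$ which is induced by multiplication in $A$; concretely it is $\bar a\otimes_B a\mapsto \bar a\,a$. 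Hence $\mu:\bar\omega\otimes_B\omega\to B$, $\bar a\otimes_B a\mapsto \bar a\,a$, is a bijective homomorphism of left $B$-modules. (That $\bar a\,a\in B$ may also be seen directly: $\delta$ is multiplicative, so $\delta(\bar a\,a)=(g^{-1}\otimes\bar a)(g\otimes a)=1_C\otimes \bar a\,a$.)

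It remains to transport the right $B$-action through $\mu$. On $\bar\omega\otimes_B\omega$ the right $B$-action is inherited from $\omega$, so $(\bar a\otimes_B a)\cdot x=\bar a\otimes_B(a\sigma(x))$; by Corollary~\ref{goedel} the set $\omega'$ is stable under $a\mapsto a\sigma(x)$, so this element indeed lies in $\bar\omega\otimes_B\omega$, and $\mu$ sends it to $\bar a\,(a\sigma(x))=(\bar a\,a)\,\sigma(x)$. As $\mu$ is onto, the transported right action on $B$ is therefore $b\cdot x=b\,\sigma(x)$. Since $\bar\omega\otimes_B\omega$ is a genuine $B$-bimodule this is a well-defined right $B$-action on $B$, so taking $b=1$ yields $\sigma(x)\in B$ for all $x\in B$, i.e.\ $\sigma(B)\subset B$. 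With that established, $B$ equipped with $b\cdot x=b\,\sigma(x)$ is by definition $B_\sigma$, and $\mu$ is the asserted $B$-bimodule isomorphism $\bar\omega\otimes_B\omega\simeq B_\sigma$.

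The main obstacle is the middle step: one must check that the isomorphism in (\ref{wuetend}) is genuinely induced by multiplication in $A$ (so that $\mu$ is bijective, not merely well defined), and that the identifications $\bar\omega=kg^{-1}\Box_C A$ and $\omega'=kg\,\Box_C A$ are compatible with (\ref{assbim}) and with the description of $\omega$ as a sub-object of $A_\sigma$ in ${}_B\M^A_{B,S^2}$ from Corollary~\ref{goedel}. This last compatibility, i.e.\ the stability of $\omega'$ under right multiplication by $\sigma(B)$, is the one point at which the Hopf-bimodule structure on $\omega$ — hence the Gorenstein hypothesis — genuinely enters; once it is in place, the inclusion $\sigma(B)\subset B$ costs nothing extra, so no separate Hopf-algebra computation is needed for it.
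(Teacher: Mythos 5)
Your proposal is correct and follows essentially the same route as the paper: both identify $\bar\omega$ and the underlying object of $\omega$ with the cotensor products $kg^{-1}\Box_C A$ and $kg\,\Box_C A$, apply (\ref{wuetend}) to these one-dimensional comodules to get the multiplication-induced isomorphism onto $B=k\Box_C A$, and then account for the $\sigma$-twist of the right action. The only (harmless) divergence is in obtaining $\sigma(B)\subset B$: you transport the right $B$-action through the surjection $\bar a\otimes_B a\mapsto\bar a a$, while the paper deduces it directly from (\ref{knights}) by multiplying with $g^{-1}$ and computing $\pi(\sigma(b)_{(1)})\otimes\sigma(b)_{(2)}=\pi(1)\otimes\sigma(b)$; both arguments are valid under the hypothesis $AB^+=B^+A$.
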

\begin{proof} 
Take in (\ref{wuetend}) for $M$ the
 ground field $k$ with the $C$-coaction
 given by $\lambda \mapsto \lambda
 \otimes g^{-1}$ and for $N$ the same
 but with $g$ instead of $g^{-1}$.
Then we get $\bar \omega \otimes_B
 (N \Box_C A) \simeq B$ as
 $B$-bimodules. The $B$-bimodule
 $\omega$ is obtained from $N
 \Box_C A$ by twisting the right
 $B$-action by $\sigma$, so the claim follows.
\end{proof}

The fact that $\sigma (B) \subset B$ is
probably the most unexpected observation
here. It illustrates how restrictive
(\ref{knights}) is especially
for $AB^+=B^+A$ since it can in this
case be multiplied from the left by
$g^{-1}$ to give
$$
		  \pi (\sigma (b))=\chi (b) \pi
 (1)
$$ 
for all $b \in B$, and from this we indeed also compute
directly that
\begin{eqnarray}
\pi (\sigma (b)_{(1)}) \otimes \sigma
 (b_{(2)})
&=& \pi (\chi (b_{(1)})S^2(b_{(2)}))
\otimes S^2(b_{(3)}) \nonumber\\ 
&=& \pi (\sigma (b_{(1)}))
\otimes S^2(b_{(2)}) \nonumber\\ 
&=& \pi (1) \otimes \chi
 (b_{(1)})S^2(b_{(2)}) \nonumber\\ 
&=& \pi (1) \otimes \sigma
 (b),\nonumber
\end{eqnarray} 
hence $\sigma (b) \in B$ by Theorem~\ref{dirsum}.

We now want to show that in fact $\sigma
(B)=B$. For this we need a small
digression about characters and  
the following basic remark:
\begin{lemma}\label{berggg}
If $B \subset A$ is a quantum
homogeneous space and $AB^+=B^+A$, then
we have $S^2(B)=B$.
\end{lemma}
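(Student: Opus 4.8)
The plan is to exhibit $S^2$ as an automorphism of $A$ that carries $B$ into itself and whose inverse $S^{-2}$ does the same, so that $S^2(B)=B$. First I would recall that since $A$ has bijective antipode, $S^2$ is an algebra automorphism of $A$, so it suffices to prove the two inclusions $S^2(B)\subset B$ and $S^{-2}(B)\subset B$. For the first inclusion I would use the characterisation of $B$ from Theorem~\ref{dirsum}(4), namely $B=\{b\in A \mid \pi(b_{(1)})\otimes b_{(2)}=\pi(1)\otimes b\}$, where $\pi:A\to C=A/B^+A$. The key point is that when $AB^+=B^+A$, the map $\pi$ is a Hopf algebra quotient (as recalled before Theorem~\ref{mainneu}), so $C$ carries an antipode $S_C$ with $\pi\circ S=S_C\circ\pi$, and $\pi$ is in particular an algebra map intertwining the two squared antipodes: $\pi\circ S^2=S_C^2\circ\pi$.

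Now take $b\in B$ and compute the left $C$-coaction on $S^2(b)$. Using that $\Delta\circ S^2=(S^2\otimes S^2)\circ\Delta$ and $\pi\circ S^2=S_C^2\circ\pi$, one gets
\begin{align*}
\pi\big((S^2 b)_{(1)}\big)\otimes (S^2 b)_{(2)}
&=\pi\big(S^2(b_{(1)})\big)\otimes S^2(b_{(2)})\\
&=S_C^2\big(\pi(b_{(1)})\big)\otimes S^2(b_{(2)}).
\end{align*}
Since $b\in B$ satisfies $\pi(b_{(1)})\otimes b_{(2)}=\pi(1)\otimes b$, applying $S_C^2\otimes S^2$ to this identity gives $S_C^2(\pi(b_{(1)}))\otimes S^2(b_{(2)})=S_C^2(\pi(1))\otimes S^2(b)=\pi(1)\otimes S^2(b)$, using $S_C^2(\pi(1))=\pi(1)$. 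Hence $S^2(b)$ satisfies the defining condition of $B$, i.e.\ $S^2(B)\subset B$.

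For the reverse inclusion I would run the same argument with $S^{-2}$ in place of $S^2$: bijectivity of $S$ gives that $S^{-1}$ is the antipode for $A^{\mathrm{op,cop}}$, $S^{-2}$ is again an algebra automorphism commuting with $\Delta$, and $\pi\circ S^{-2}=S_C^{-2}\circ\pi$ because $\pi$ is a Hopf map; the fixed-point identity $S_C^{-2}(\pi(1))=\pi(1)$ still holds, so the same computation shows $S^{-2}(B)\subset B$, equivalently $B\subset S^2(B)$. Combining the two inclusions yields $S^2(B)=B$. The only place where the hypothesis $AB^+=B^+A$ is genuinely used is in passing from $\pi$ being merely a coalgebra-and-right-module quotient to its being a Hopf algebra quotient, which is exactly what supplies the antipode $S_C$ on $C$ and the intertwining relation $\pi\circ S=S_C\circ\pi$; I expect verifying that relation (and $S_C^2(\pi(1))=\pi(1)$, which is immediate from $S_C(\pi(1))=\pi(1)$) to be the only non-formal point, and it is routine given Koppinen's observation $B^+A=S(AB^+)$ quoted in the introduction.
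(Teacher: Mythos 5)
Your argument is correct and follows essentially the same route as the paper: both rest on the coaction characterisation of $B$ from Theorem~\ref{dirsum}(4) together with the fact that $S^{\pm 2}$ preserves $B^+A$, which the paper extracts directly from Koppinen's $S(AB^+)=B^+A$ combined with $AB^+=B^+A$, while you phrase the same fact through the induced antipode $S_C$ on the quotient Hopf algebra $C$. The only step you leave implicit is the bijectivity of $S_C$ (needed to write $S_C^{-2}$), which is not automatic for a Hopf quotient but follows from the equality $S(B^+A)=S(AB^+)=B^+A$ — exactly the identity displayed in the paper's proof.
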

\begin{proof}
Koppinen's 
$S(AB^+)=B^+A$ (\cite{ms}, Lemma~1.4) gives 
$$S^2(B^+A)=S^2(AB^+)=S(B^+A)=S(AB^+)=B^+A$$
and hence for all $b \in B$
$$
		  \pi (S^{\pm 2}(b)_{(1)}) \otimes
 S^{\pm 2}(b)_{(2)}=
		  \pi (S^{\pm 2}(b_{(1)})) \otimes
 S^{\pm 2}(b_{(2)})=\pi (1) \otimes
 S^{\pm 2}(b),
$$ 
so $S^{\pm 2}(B) \subset B$ which
 implies $S^2(B)=B$.
\end{proof}
  
\subsection{Remarks on characters}\label{chrks} 
For a Hopf algebra $A$ the set 
$G:=\mathrm{Char}(A)$ of characters 
(algebra homomorphisms
$\gamma : A \rightarrow k$) is
canonically an affine group scheme
represented by the commutative Hopf
algebra $A/J(A)$, where 
$$
		  J(A):=\{a \in A\,|\,\gamma (a)=0
\,\forall\,\gamma \in
\mathrm{Char}(A)\},
$$
and for a right coideal subalgebra $B
\subset A$ the set $X:=\mathrm{Char}(B)$ becomes an
affine $G$-scheme represented by
$B/J(B)$. The (right) $G$-action on $X$ 
is given like the group structure in $G$
by the canonical product on $\mathrm{Hom}_k(A,k)$
\begin{equation}\label{convolution}
		  (\varphi \psi)(a):=\varphi
		  (a_{(1)})\psi (a_{(2)}) ,\quad
		  \varphi,\psi \in \mathrm{Hom}_k(A,k),a \in A
\end{equation}  
for which $\varepsilon$ is the unit element.

The inclusion $B \rightarrow A$ induces a homomorphism 
$B/J(B) \rightarrow A/J(A)$, and the restriction of a
character from $A$ to $B$ is the dual
morphism $G \rightarrow X$. 
However, even for some well-behaved examples of
quantum homogeneous spaces 
(such as the Podle\'s sphere that we will 
define in Section~\ref{stpodl}) the map $G
\rightarrow X$ is not surjective, 
$B/J(B) \rightarrow A/J(A)$ is not
faithfully flat and not even injective, and the 
$G$-action on $X$ is not transitive.

But at least we can say the following:
\begin{thm}\label{transes}
If $\chi$ is a character on a quantum
 homogeneous space
$B \subset A$, 
$\beta : A \rightarrow B$ is a right
$B$-linear projection 
as in Theorem~\ref{dirsum} (2) and
we define
$$ 
		  \gamma : A
 \rightarrow k, \quad a \mapsto \chi (\beta (S^{-1}(
		  a))),
$$ 
then we have
$$
		  \chi \gamma = \varepsilon 
$$  
as functionals on $B$.
\end{thm}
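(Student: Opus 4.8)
The plan is to reduce the identity $\chi\gamma=\varepsilon$ on $B$ to a standard antipode axiom, exploiting the single piece of multiplicativity that the right $B$-linear projection $\beta$ still carries. The key observation is that although $\chi$ need not extend to a character of $A$ (the canonical map $G\to X$ fails to be surjective in general, so one cannot simply invoke the already-known case $B=A$), the surrogate functional $\delta:=\chi\circ\beta\colon A\to k$ is \emph{partially} multiplicative, and that turns out to be exactly enough.

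First I would record the properties of $\delta$. Since $\beta$ is right $B$-linear and $\chi$ is an algebra homomorphism, for all $a\in A$ and $b\in B$ one has
\[
\delta(ab)=\chi(\beta(ab))=\chi(\beta(a)b)=\chi(\beta(a))\chi(b)=\delta(a)\chi(b),
\]
and $\delta(1)=\chi(\beta(1))=\chi(1)=1$; in particular $\delta|_B=\chi$, and $\gamma=\delta\circ S^{-1}$ by definition. Next I would unfold the convolution: because $B$ is a right coideal subalgebra, $\Delta(B)\subset B\otimes A$, so for $b\in B$ every factor $b_{(1)}$ lies in $B$ and
\[
(\chi\gamma)(b)=\chi(b_{(1)})\gamma(b_{(2)})=\chi(b_{(1)})\,\delta(S^{-1}(b_{(2)})).
\]
Using $b_{(1)}\in B$ together with $\delta(xy)=\delta(x)\chi(y)$ (for $x\in A$, $y\in B$) I can absorb the scalar $\chi(b_{(1)})$ into the argument of $\delta$ from the right:
\[
\chi(b_{(1)})\,\delta(S^{-1}(b_{(2)}))=\delta\bigl(S^{-1}(b_{(2)})\,b_{(1)}\bigr).
\]
Finally, $S^{-1}(b_{(2)})\,b_{(1)}=\varepsilon(b)1$ is precisely the left antipode axiom of the Hopf algebra $A^{\mathrm{cop}}$ (comultiplication $b\mapsto b_{(2)}\otimes b_{(1)}$, antipode $S^{-1}$), so $(\chi\gamma)(b)=\delta(\varepsilon(b)1)=\varepsilon(b)\delta(1)=\varepsilon(b)$, which is the assertion.

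I do not expect a genuine obstacle: the whole content is the remark that $\delta=\chi\circ\beta$, while not an algebra map, satisfies $\delta(ab)=\delta(a)\chi(b)$ for $b\in B$, and this partial multiplicativity is what makes the three displayed steps go through. The only points to watch are the routine Sweedler-index bookkeeping and using the correct form $S^{-1}(b_{(2)})b_{(1)}=\varepsilon(b)1$ of the antipode identity rather than the more familiar $S(b_{(1)})b_{(2)}=\varepsilon(b)1$.
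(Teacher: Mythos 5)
Your proof is correct and is essentially the paper's own argument: the paper computes $(\chi\gamma)(b)=\chi(b_{(1)})\chi(\beta(S^{-1}(b_{(2)})))=\chi(\beta(S^{-1}(b_{(2)}))b_{(1)})=\chi(\beta(S^{-1}(b_{(2)})b_{(1)}))=\varepsilon(b)$, which uses exactly the same ingredients — multiplicativity of $\chi$ on $B$ with $b_{(1)}\in B$, right $B$-linearity of $\beta$, and the identity $S^{-1}(a_{(2)})a_{(1)}=\varepsilon(a)1$. Your functional $\delta=\chi\circ\beta$ with its partial multiplicativity $\delta(ab)=\delta(a)\chi(b)$ is just a clean repackaging of those same two steps.
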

\begin{proof}
This follows by straightforward computation: 
\begin{eqnarray}
		  (\chi \gamma)(b)
&=&  \chi (b_{(1)})\chi
 (\beta(S^{-1}(b_{(2)})))=
  \chi (\beta(S^{-1}(b_{(2)})))\chi (b_{(1)}) \nonumber\\ 
&=&
 \chi(\beta(S^{-1}(b_{(2)}))b_{(1)})=
\chi(\beta(S^{-1}(b_{(2)})b_{(1)}))
\nonumber\\ 
&=& \chi (\beta (\varepsilon (b)))=
 \varepsilon (b),\nonumber
\end{eqnarray} 
where we used the
properties of $\chi$ and $\beta$ and
the fact that in every Hopf algebra
with bijective antipode we have 
$$
		  S^{-1}(a_{(2)})a_{(1)}=S^{-1}(S(S^{-1}(a_{(2)})a_{(1)}))=
S^{-1}(a_{(1)}S(a_{(2)}))=\varepsilon
 (a)$$
for all $a \in A$ since $S$ is always an
 algebra antihomomorphism.
\end{proof}

Note that $\gamma$ is in general not a character on
$A$, though.

\subsection{The proof of Theorem~\ref{mainneu}}\label{director} 
Theorem~\ref{transes} implies:
\begin{cor}\label{wsx}
If $\chi$ is a character on a quantum
 homogeneous space $B \subset A$, then
 the algebra homomorphism $\sigma : B \rightarrow A$ 
given by
$$
		  \sigma (b):=\chi (b_{(1)})S^2(b_{(2)})
$$
is injective. If $AB^+=B^+A$ 
and $\sigma (B) \subset
 B$, then $\sigma (B)=B$.
\end{cor}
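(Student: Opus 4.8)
The plan is to manufacture a two–sided inverse of $\sigma$ out of the functional $\gamma$ produced by Theorem~\ref{transes}.

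\emph{Injectivity.} I would write $\sigma=S^2\circ L$, where $L\colon B\to A$ is the $k$-linear map $L(b):=\chi(b_{(1)})b_{(2)}$; this is well defined since $\Delta(B)\subset B\otimes A$, so $\chi$ is applied to $b_{(1)}\in B$. As $S$ is bijective it suffices to see that $L$ is injective, and for this I exhibit a left inverse: put $R\colon A\to A$, $R(a):=\gamma(a_{(1)})a_{(2)}$, with $\gamma(a)=\chi(\beta(S^{-1}(a)))$ as in Theorem~\ref{transes}. Expanding $\Delta(L(b))=\chi(b_{(1)})\,b_{(2)}\otimes b_{(3)}$ and noting that for $b\in B$ the partial sum $b_{(1)}\otimes b_{(2)}$ inside $\Delta^{(2)}(b)$ runs through the coproducts of elements of $B$, one gets $R(L(b))=\chi(b_{(1)})\gamma(b_{(2)})\,b_{(3)}=b$ by the relation $\chi(c_{(1)})\gamma(c_{(2)})=\varepsilon(c)$ $(c\in B)$ of Theorem~\ref{transes}. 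Hence $L$, and therefore $\sigma$, is injective; this settles the first assertion for every quantum homogeneous space.

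\emph{Surjectivity when $AB^+=B^+A$ and $\sigma(B)\subset B$.} By Lemma~\ref{berggg}, $S^{\pm2}(B)=B$, so $S^2|_B\in\mathrm{Aut}(B)$ and $L(B)=S^{-2}(\sigma(B))\subset B$; it remains to show $L\colon B\to B$ is onto, which I would do by checking that $R$ restricts to $B$ and is there the two–sided inverse of $L$. First, $R(B)\subset B$: by Theorem~\ref{dirsum}(4), membership of $R(b)$ in $B$ is the equality $\pi(R(b)_{(1)})\otimes R(b)_{(2)}=\pi(1)\otimes R(b)$, which after the same Sweedler bookkeeping reduces to the colinearity relation $\gamma(b_{(1)})\pi(b_{(2)})=\gamma(b)\pi(1)$ for $b\in B$; this follows from the explicit shape $\gamma=\chi\circ\beta\circ S^{-1}$, using right $B$-linearity of $\beta$ and that $\pi$ is a coalgebra map with $\pi|_B=\varepsilon|_B\,\pi(1)$. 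Granting $R(B)\subset B$, the injectivity step already gives $R|_B\circ L=\mathrm{id}_B$, so it is enough to prove $L\circ R|_B=\mathrm{id}_B$, i.e.\ $L(R(b))=b$ for $b\in B$. Since $R(b)\in B$, its first Sweedler leg lies in $B$, so in $L(R(b))$ the character $\chi$ may be replaced by the functional $\chi\circ\beta$ on $A$ (which agrees with $\chi$ on $B$); pushing $\chi\circ\beta$ through the coproduct and using the bookkeeping once more, the claim $L(R(b))=b$ reduces to the single identity $\chi\bigl(R(b)\bigr)=\varepsilon(b)$ for all $b\in B$, equivalently $\gamma(b_{(1)})\,(\chi\circ\beta)(b_{(2)})=\varepsilon(b)$. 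Once this is in hand, $L$ is bijective, hence $\sigma=S^2|_B\circ L$ is an automorphism of $B$ and in particular $\sigma(B)=B$.

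\emph{Where the real work is.} Everything apart from the two displayed identities ($R(B)\subset B$ and $\chi(R(b))=\varepsilon(b)$) is routine manipulation with Sweedler notation and Theorem~\ref{transes}. The substance is $\chi(R(b))=\varepsilon(b)$: it is the ``mirror image'' of the relation $\chi(b_{(1)})\gamma(b_{(2)})=\varepsilon(b)$ with the two legs exchanged, and — unlike $R\circ L=\mathrm{id}_B$ — it genuinely uses the hypothesis $\sigma(B)\subset B$. Indeed $\sigma(B)\subset B$ is equivalent, given $AB^+=B^+A$, to $L(B)\subset B$, and by Theorem~\ref{dirsum}(4) this is equivalent to $\chi$ being colinear for the right $C$-coaction $b\mapsto b_{(1)}\otimes\pi(b_{(2)})$ on $B$, i.e.\ to $\chi(b_{(1)})\pi(b_{(2)})=\chi(b)\pi(1)$; the remaining Hopf-algebraic computation with $\beta$, $S^{\pm1}$ and $\pi$ exploits precisely this colinearity to collapse $\chi(R(b))$ to $\varepsilon(b)$. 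I expect that computation to be the only real obstacle; the rest is bookkeeping.
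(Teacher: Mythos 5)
Your injectivity argument is fine and is essentially the paper's: the map $R(a)=\gamma(a_{(1)})a_{(2)}$ is (up to conjugation by $S^{\pm 2}$) exactly the left inverse $\sigma^{-1}(a)=\gamma(S^{-2}(a_{(1)}))S^{-2}(a_{(2)})$ used there, and the computation $R(L(b))=\chi(b_{(1)})\gamma(b_{(2)})b_{(3)}=b$ is legitimate because the first two Sweedler legs of $\Delta^{(2)}(b)$ are coproducts of elements of $B$, so Theorem~\ref{transes} applies.

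The surjectivity half, however, has a genuine gap. You reduce it to two unproven statements: $R(B)\subset B$ and the ``mirror'' identity $\gamma(b_{(1)})(\chi\circ\beta)(b_{(2)})=\varepsilon(b)$ for $b\in B$, and you explicitly defer the latter as ``the real work'' you expect to be bookkeeping. It is not: in the convolution algebra a one-sided inverse need not be two-sided, and the paper warns right after Theorem~\ref{transes} that $\gamma$ is in general not a character on $A$, so there is no formal reason why the relation $\chi\gamma=\varepsilon$ should hold with the factors exchanged; likewise your claim that $R(B)\subset B$ (equivalently, colinearity of $\gamma$ on $B$) follows from the shape $\gamma=\chi\circ\beta\circ S^{-1}$ is not substantiated, since $\beta$ is only a right $B$-linear projection with no assumed compatibility with $\Delta$ or $\pi$, and $S^{-1}$ reverses coproducts. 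In fact the most direct way to see both statements is \emph{a posteriori}, from the very surjectivity you are trying to prove, so the proposed route is close to circular. The paper avoids this entirely by a local-finiteness trick: with $\hat\sigma(b)=\chi(b_{(1)})b_{(2)}=S^{-2}(\sigma(b))\in B$ (using $S^2(B)=B$ from Lemma~\ref{berggg}), one considers for fixed $b$ the finite-dimensional space $M=\{\varphi(b_{(1)})b_{(2)}\mid\varphi\in\mathrm{Hom}_k(B,k)\}\cap B$, checks $\hat\sigma(M)\subset M$ since $\hat\sigma(\varphi(b_{(1)})b_{(2)})=(\varphi\chi)(b_{(1)})b_{(2)}$, and concludes from injectivity that $\hat\sigma|_M$ is bijective; since $b=\varepsilon(b_{(1)})b_{(2)}\in M$, surjectivity follows. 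To complete your argument you would either need to supply honest proofs of your two identities (which I do not see how to do without the paper's trick) or replace that step by the finite-dimensionality argument.
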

\begin{proof}
An explicit left inverse of $\sigma$ is given by 
$$
		  \sigma^{-1} : A \rightarrow
 A,\quad
 \sigma^{-1}(a):=\gamma (S^{-2}(a_{(1)}))S^{-2}(a_{(2)}),
$$ 
where $\gamma$ is as in
 Theorem~\ref{transes}. 
Under the additional assumption $AB^+=B^+A$ we have
$S^2(B)=B$ (Lemma~\ref{berggg}),
so $\sigma (B) \subset B$ implies
$$\hat \sigma (b):=\chi
 (b_{(1)})b_{(2)}=S^{-2}(\sigma(b)) \in
 B$$ 
for $b \in B$. Now abbreviate for a
 given $b \in B$
$$
		  M:=\{\varphi
 (b_{(1)})b_{(2)}\,|\,\varphi \in
 \mathrm{Hom}_k(B,k)\} \cap B.
$$ 
We have 
$$
		  \hat \sigma (\varphi
 (b_{(1)})b_{(2)})=
		  \varphi (b_{(1)})\chi (b_{(2)})
 b_{(3)}=
		  (\varphi \chi)(b_{(1)})b_{(2)}
 \in M,
$$
that is, $\hat \sigma(M) \subset
 M$. Since $\sigma$ and hence $\hat
 \sigma$ has been shown already to be
 injective, $\hat \sigma|_M$ is
 bijective since $\mathrm{dim}_k(M)<\infty$ 
 (if $\Delta (b)=\sum_{-=0}^{n} x_i
 \otimes y_i$, then $M$ is spanned by the
 $y_i$). Furthermore, $b=\varepsilon
 (b_{(1)})b_{(2)} \in M$, so $b \in
 \mathrm{im}\, \hat \sigma$. Thus $b \in
 \mathrm{im}\, \hat
 \sigma$ for arbitrary $b \in B$ and
 hence also $\sigma=S^2 \circ \hat
 \sigma$ is surjective. 
\end{proof}

\begin{proof}[Proof of Theorem~\ref{mainneu}]
We have constructed in
Corollary~\ref{qaz} a $B$-bimodule
$\bar \omega$ with $\bar \omega
 \otimes_B \omega \simeq B_\sigma$, 
where $\sigma (b)=\chi
 (b_{(1)})S^2(b_{(2)})$. Corollary~\ref{wsx} 
shows that $\sigma$ is under the
 assumptions of Theorem~\ref{mainneu} an
 automorphism of $B$. This implies 
$$
		  B_\sigma \simeq {}_{\sigma^{-1}} B
$$
as bimodules, where ${}_{\sigma^{-1}} B$
is $B$ as right module but the left
 action is twisted by $\sigma^{-1}$ (the
 isomorphism is given by $\sigma^{-1}$). 
Hence we get
$$
		  {}_\sigma\bar \omega \otimes_B
 \omega \simeq B.
$$
To see that we also have 
$$
		  \omega \otimes_B {}_\sigma\bar \omega \simeq B
$$
note that we know
$$
		  \bar \omega \otimes_B
 \omega_{\sigma^{-1}} \simeq B,
$$
and $\omega_{\sigma^{-1}} \in {}_B
 \M^A_B$. Applying the monoidal functor 
$M \mapsto M/B^+M$ gives the
 corresponding $g^{-1}g=1$ in $C$, but
 here we also have $g
 g^{-1}=1$. Retranslating this into Hopf
 bimodules yields
\begin{equation*}
		  \omega \otimes_B {}_\sigma \bar
 \omega \simeq \omega_{\sigma^{-1}} \otimes
 \bar \omega \simeq B.\qedhere
\end{equation*}
\end{proof}

\subsection{A smoothness criterion}\label{criterium}
We mention here a
useful tool for proving the 
smoothness of $B \subset A$. 
The key remark
is \cite{mccr}, Theorem~7.2.8:
\begin{thm}
Let $B \subset A$ be a ring extension such that 
$B$ is a direct summand in $A$ as a $B$-bimodule. Then 
$\mathrm{gl.dim}(B) \le \mathrm{gl.dim} (A)+
\mathrm{proj.dim}_B(A)$.
\end{thm}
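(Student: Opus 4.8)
The final statement to prove is the theorem of McConnell–Robson: if $B \subset A$ is a ring extension in which $B$ is a direct summand of $A$ as a $B$-bimodule, then $\mathrm{gl.dim}(B) \le \mathrm{gl.dim}(A) + \mathrm{proj.dim}_B(A)$.

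The plan is to bound the projective dimension of an arbitrary left $B$-module $M$ by a change-of-rings argument, passing through $A$. First I would set $n := \mathrm{proj.dim}_B(A)$ and assume $\mathrm{gl.dim}(A) = d < \infty$ (otherwise there is nothing to prove). Given a left $B$-module $M$, induce up to $A$ to obtain the left $A$-module $A \otimes_B M$; since $\mathrm{gl.dim}(A) = d$, we have $\mathrm{proj.dim}_A(A \otimes_B M) \le d$. The key point is that an $A$-module $N$ of projective dimension $\le d$ over $A$ has projective dimension $\le d + n$ over $B$: a length-$d$ projective resolution of $N$ over $A$ consists of modules that are $B$-summands of free $A$-modules, hence have $B$-projective dimension $\le n$ (here $\mathrm{proj.dim}_B$ of a free $A$-module equals $\mathrm{proj.dim}_B(A)$, and summands only decrease it), and splicing $B$-projective resolutions of each term through the resolution of length $d$ gives $\mathrm{proj.dim}_B(N) \le d + n$. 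Applying this to $N = A \otimes_B M$ yields $\mathrm{proj.dim}_B(A \otimes_B M) \le d + n$.

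The second ingredient is that $M$ is a direct summand, as a left $B$-module, of $A \otimes_B M$. This is exactly where the hypothesis that $B$ is a $B$-bimodule direct summand of $A$ is used: writing $A = B \oplus P$ as $B$-bimodules and tensoring over $B$ with the left $B$-module $M$ gives $A \otimes_B M = M \oplus (P \otimes_B M)$ as left $B$-modules, with the copy of $M$ coming from the splitting $m \mapsto 1 \otimes m$ of the multiplication map $A \otimes_B M \to M$ (which is $B$-linear and splits precisely because $B \hookrightarrow A$ splits as $B$-bimodules). Since projective dimension of a direct summand is at most that of the whole module, we conclude $\mathrm{proj.dim}_B(M) \le \mathrm{proj.dim}_B(A \otimes_B M) \le d + n$. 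As $M$ was arbitrary, taking the supremum gives $\mathrm{gl.dim}(B) \le d + n = \mathrm{gl.dim}(A) + \mathrm{proj.dim}_B(A)$.

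I expect the main obstacle — really the only subtle point — to be the verification that an $A$-module of $A$-projective dimension $d$ has $B$-projective dimension at most $d + n$; this requires the standard lemma that if $0 \to K \to P_{d-1} \to \cdots \to P_0 \to N \to 0$ is an $A$-projective resolution and each $P_i$ (and $K$) has $\mathrm{proj.dim}_B \le n$, then $\mathrm{proj.dim}_B(N) \le d + n$, proved by dimension-shifting along the resolution together with the elementary fact that $\mathrm{proj.dim}_B$ is subadditive in short exact sequences. Everything else is the routine change-of-rings formalism. I would either cite this lemma (it is Exercise-level material, e.g.\ in \cite{weibel} or \cite{mccr} itself) or dispatch it in one line by induction on $d$.
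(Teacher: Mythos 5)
Your argument is correct. The paper does not prove this statement at all --- it is quoted as Theorem~7.2.8 of \cite{mccr} --- and your proof is precisely the standard change-of-rings argument behind that citation: the $B$-bimodule splitting $A = B \oplus P$ exhibits $M$ as a left $B$-module summand of $A \otimes_B M$, whose $B$-projective dimension is at most $\mathrm{gl.dim}(A) + \mathrm{proj.dim}_B(A)$ because projective $A$-modules restrict to $B$-modules of projective dimension at most $\mathrm{proj.dim}_B(A)$ and the splicing/dimension-shifting lemma you invoke is routine; the only point to keep straight is the sidedness convention (global dimension and $\mathrm{proj.dim}_B(A)$ taken on matching sides), which your left-module formulation handles consistently.
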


Together with Theorem~\ref{dirsum} this implies for
example:
\begin{cor}\label{moninervt}
If $B \subset A$ is a quantum homogeneous space 
and $A$ is commutative, then 
$\mathrm{gl.dim}(B) \le \mathrm{gl.dim} (A)$.
\end{cor}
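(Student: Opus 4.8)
The plan is to apply the McConnell--Robson theorem (\cite{mccr}, Theorem~7.2.8) quoted just above to the extension $B \subset A$, so that the entire problem reduces to verifying its two hypotheses: that $B$ is a $B$-bimodule direct summand of $A$, and that $\mathrm{proj.dim}_B(A)=0$. Both will follow from Theorem~\ref{dirsum} together with the commutativity of $A$.

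First I would note that commutativity of $A$ forces $B$ to be commutative and the $B$-bimodule structure on $A$ to be symmetric, i.e.\ $ba=ab$ for $b\in B$, $a\in A$. The elementary point driving the whole argument is that for a commutative ring $B$ and symmetric $B$-bimodules $M,N$, any right $B$-linear map $f\colon M\to N$ is automatically a bimodule map, since $f(bm)=f(mb)=f(m)b=bf(m)$. Hence in this setting a right $B$-module direct summand is the same as a $B$-bimodule direct summand, and a module that is projective on one side is projective on the other.

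Next I would invoke Theorem~\ref{dirsum}: a quantum homogeneous space is by definition right faithfully flat, so condition~(2) of that theorem holds and yields a decomposition $A=B\oplus B^\perp$ of right $B$-modules with $A$ right $B$-projective. By the previous paragraph the projection $A\to B$ onto the first summand is then a $B$-bimodule splitting of the inclusion $B\hookrightarrow A$, so $B$ is a $B$-bimodule direct summand of $A$; and right $B$-projectivity of $A$ gives left $B$-projectivity, i.e.\ $\mathrm{proj.dim}_B(A)=0$. Feeding these into the inequality $\mathrm{gl.dim}(B)\le\mathrm{gl.dim}(A)+\mathrm{proj.dim}_B(A)$ gives $\mathrm{gl.dim}(B)\le\mathrm{gl.dim}(A)$, as claimed (the inequality being vacuous if $\mathrm{gl.dim}(A)=\infty$).

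The only step needing genuine attention is the upgrade from the one-sided decomposition produced by Theorem~\ref{dirsum} to the two-sided one required by McConnell--Robson; this is precisely where commutativity of $A$ enters, through the symmetric-bimodule observation above. The rest is bookkeeping with the cited results.
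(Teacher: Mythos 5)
Your proposal is correct and follows essentially the same route the paper intends: Theorem~\ref{dirsum}(2) supplies the right $B$-module decomposition $A=B\oplus B^\perp$ with $A$ projective over $B$, and commutativity of $A$ makes this automatically a $B$-bimodule splitting with $\mathrm{proj.dim}_B(A)=0$, so the McConnell--Robson bound gives $\mathrm{gl.dim}(B)\le\mathrm{gl.dim}(A)$. Your explicit remark that one-sided maps between symmetric bimodules are bimodule maps is exactly the (unstated) bookkeeping the paper relies on.
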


But also in many noncommutative examples it will happen
that the decomposition in
Theorem~\ref{dirsum},(2) is actually
a decomposition of bimodules:

\begin{lemma}\label{dutz}
Let $B \subset A$ be a quantum homogeneous space and 
assume that $A/B^+A$ is cosemisimple with 
$A B^+=B^+ A$. Then 
$\mathrm{gl.dim}(B) \le \mathrm{gl.dim} (A)$.
\end{lemma}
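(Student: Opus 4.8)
The plan is to invoke the quoted theorem of McConnell and Robson (Theorem~7.2.8 of \cite{mccr}): it reduces the claim to two assertions, namely that $B$ is a direct summand of $A$ as a $B$-bimodule and that $\mathrm{proj.dim}_B(A)=0$, since the theorem then gives $\mathrm{gl.dim}(B)\le\mathrm{gl.dim}(A)+\mathrm{proj.dim}_B(A)=\mathrm{gl.dim}(A)$. The second assertion is the routine one: $A$ is projective as a right $B$-module by Theorem~\ref{dirsum}(2), and since $AB^+=B^+A$ the hypothesis that $A$ is right faithfully flat over $B$ also makes it left faithfully flat (Remark~1.3 of \cite{ms}); applying Theorem~\ref{dirsum} to the Hopf algebra $A^{\mathrm{op}}$ (bijective antipode $S^{-1}$, still containing $B$ as a right coideal subalgebra) then yields left $B$-projectivity as well, so $\mathrm{proj.dim}_B(A)=0$ no matter which side is meant in \cite{mccr}.

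The substantive point is the bimodule splitting, and this is where cosemisimplicity of $C:=A/B^+A$ is used. Since $AB^+=B^+A$ the projection $\pi:A\to C$ is a surjection of Hopf algebras, and by Theorem~\ref{dirsum}(4) the subalgebra $B$ is exactly the set $\{a\in A\mid\pi(a_{(1)})\otimes a_{(2)}=1_C\otimes a\}$ of left $C$-coinvariants for the coaction $a\mapsto\pi(a_{(1)})\otimes a_{(2)}$. Being cosemisimple, $C$ admits a normalised two-sided integral $\lambda\in C^*$, that is, $\lambda(1_C)=1$ and $\lambda(c_{(1)})c_{(2)}=\lambda(c)1_C=c_{(1)}\lambda(c_{(2)})$ for all $c\in C$. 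I would then introduce the averaging map
$$
E:A\longrightarrow A,\qquad E(a):=\lambda(\pi(a_{(1)}))\,a_{(2)},
$$
and check that (i) $E(A)\subseteq B$ — apply the coaction to $E(a)$ and collapse the resulting $C$-leg using the right-integral identity $\lambda(c_{(1)})c_{(2)}=\lambda(c)1_C$ together with $\Delta_C\circ\pi=(\pi\otimes\pi)\circ\Delta_A$; (ii) $E|_B=\mathrm{id}_B$, since $E(b)=\lambda(\pi(b_{(1)}))b_{(2)}=\lambda(1_C)b=b$ for $b\in B$; and (iii) $E$ is a homomorphism of $B$-bimodules. For (iii) one uses that $\pi$ is multiplicative, so that in $E(xay)=\lambda(\pi(x_{(1)}a_{(1)}y_{(1)}))\,x_{(2)}a_{(2)}y_{(2)}$ the functional $\pi$ distributes over the product; one may then substitute the coinvariance relations $\pi(x_{(1)})\otimes x_{(2)}=1_C\otimes x$ and $\pi(y_{(1)})\otimes y_{(2)}=1_C\otimes y$ (valid for $x,y\in B$) before $\lambda$ is evaluated, which pulls $x$ out on the left and $y$ out on the right and leaves $E(xay)=\lambda(\pi(a_{(1)}))\,xa_{(2)}y=xE(a)y$. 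By (i) and (ii) the map $E$ is idempotent with image $B$, so by (iii) we obtain $A=B\oplus\ker E$ as $B$-bimodules.

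Feeding the bimodule splitting and $\mathrm{proj.dim}_B(A)=0$ into Theorem~7.2.8 of \cite{mccr} then yields $\mathrm{gl.dim}(B)\le\mathrm{gl.dim}(A)$. The step that I expect to demand genuine care is (iii): the Sweedler-notation bookkeeping has to be organised so that the coinvariance relations for $x$ and $y$ can be inserted \emph{before} $\lambda$ acts on the $C$-component, and this is exactly where the hypothesis $AB^+=B^+A$ is indispensable — it is what makes $C$ a Hopf algebra and $\pi$ an algebra map, so that both the integral $\lambda$ and the multiplicativity of $\pi$ are available. Cosemisimplicity of $C$ enters only to guarantee the existence of the normalised integral $\lambda$.
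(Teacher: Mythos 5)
Your argument is correct and is essentially the paper's own proof: your averaging map $E(a)=\lambda(\pi(a_{(1)}))a_{(2)}$ is exactly the projection $\beta$ the paper builds from the Haar functional of the Hopf algebra quotient $C=A/B^+A$, fed into the same McConnell--Robson bound. The only difference is that you spell out the "easily verified" bilinearity and the vanishing of $\mathrm{proj.dim}_B(A)$ (on both sides, via Theorem~\ref{dirsum}(2) and the left faithful flatness remark), which the paper leaves implicit.
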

\begin{proof}
As remarked above, the condition $A B^+=B^+ A$ means
that $C=A/B^+A$ is a Hopf algebra quotient of
$A$. The cosemisimplicity can be
 characterised as the existence of a
 (unique) 
functional $h : C \rightarrow k$ satisfying 
$$h(1)=1,\quad
 h(c_{(1)})c_{(2)}=c_{(1)}h(c_{(2)})=h(c),\quad
 c \in C,$$
see e.g.~\cite{chef},
 Theorem~13 in Section~11.2.1, and it is easily verified that 
$$
		  \beta : A \rightarrow A,\quad
 a \mapsto h(\pi(a_{(1)}))a_{(2)}
$$ is 
then a $B$-bilinear projection from $A$ onto 
$B \subset A$.   
\end{proof}

Note that the assumption of
cosemisimplicity of $C$ can be weakened, it suffices
that there is
a total integral $h : C \rightarrow A$
in the sense of \cite{doit} whose image
commutes with $B \subset A$ as in [ibid.],
Proposition (1.7)(b).

\begin{cor}\label{wirklich}
If $B \subset A$ is as in Lemma~\ref{dutz}
and $A^e$ is left Noetherian with 
$ \mathrm{gl.dim} (A^e)<\infty$, then $B$ is smooth. 
\end{cor}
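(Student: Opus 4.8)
The plan is to deduce Corollary~\ref{wirklich} directly from Theorem~\ref{main} by verifying its two hypotheses: that $B$ is smooth and that $\varepsilon|_B$ is Cohen-Macaulay (here in fact Gorenstein). The smoothness is the serious point and should come from Lemma~\ref{dutz} together with the Noetherian/finite global dimension assumption on $A^e$. First I would observe that Lemma~\ref{dutz} already gives $\mathrm{gl.dim}(B) \le \mathrm{gl.dim}(A) < \infty$ (since $\mathrm{gl.dim}(A) \le \mathrm{gl.dim}(A^e)$, as $A$ is a quotient — indeed a direct summand — of $A^e$ as an $A^e$-module, or simply because $A$-modules restrict from $A^e$-modules appropriately). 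But finite global dimension of $B$ is only the bound $\mathrm{gl.dim}(B) \le \mathrm{dim}(B)$; to get $\mathrm{dim}(B) < \infty$, i.e. genuine smoothness, I would need a finitely generated projective resolution of $B$ as a $B^e$-module of finite length.

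The key step is therefore: transport the $A^e$-projective resolution of $A$ down to $B$. Since $AB^+ = B^+A$ and $C = A/B^+A$ is cosemisimple, Lemma~\ref{dutz}'s proof produces a $B$-\emph{bilinear} projection $\beta : A \to B$, so $B$ is a direct summand of $A$ as a $B$-bimodule, hence as a $B^e$-module. Consequently $B$ is a direct summand of $A$ also inside the category of $B^e$-modules. Now I would use that $B^e = B \otimes B^{\mathrm{op}}$ is a subalgebra of $A^e = A \otimes A^{\mathrm{op}}$ over which $A^e$ is (faithfully) flat — this follows from Theorem~\ref{dirsum}(2) applied on each tensor factor, giving $A^e = B^e \oplus (\text{complement})$ as $B^e$-modules on the appropriate side, so in particular $A^e$ is projective, hence flat, as a one-sided $B^e$-module. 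Then a finitely generated projective resolution $P_\bullet \to A$ of $A^e$-modules, when restricted along $B^e \hookrightarrow A^e$, need not stay projective, so instead I would argue as follows: $A$ has finite projective dimension over $A^e$ (from $\mathrm{gl.dim}(A^e) < \infty$ and $A^e$ Noetherian, so a finite-length finitely generated projective $A^e$-resolution exists), and restriction along the flat ring map $B^e \to A^e$ together with the fact that $A^e$ is itself projective over $B^e$ would show $A$ has finite projective dimension as a $B^e$-module with a finitely generated resolution; finally, since $B$ is a $B^e$-direct summand of $A$, it inherits a finitely generated projective $B^e$-resolution of finite length, i.e. $\mathrm{dim}(B) < \infty$.

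For the second hypothesis, the Gorenstein (a fortiori Cohen-Macaulay) condition on $\varepsilon|_B$: the excerpt states in the surrounding text (just before Corollary~\ref{wirklich} was reached) that the Gorenstein condition is checked for the standard and nonstandard Podle\'s spheres ``in the same way''; in general, under the cosemisimplicity hypothesis one expects $C \simeq A/B^+A$ to control $\mathrm{Ext}^\bullet_B(k,B)$, and since $A$ itself, being Noetherian with $\mathrm{gl.dim}(A^e)<\infty$, is AS-Gorenstein-like, the character $\varepsilon$ restricts well. Concretely I would compute $\mathrm{Ext}^\bullet_B(k,B)$ using that $B$ is a $B$-bimodule summand of $A$ and that for $A$ one has the corresponding Ext concentrated in a single degree (the Hopf-algebraic input from \cite{bz}-type arguments), then restrict.

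\medskip
\noindent\textbf{Main obstacle.} The hard part will be promoting $\mathrm{gl.dim}(B) < \infty$ to $\mathrm{dim}(B) < \infty$: global dimension being finite does not by itself bound the projective dimension of the \emph{bimodule} $B$, and one genuinely needs the descent-of-resolutions argument through $B^e \hookrightarrow A^e$ using both the flatness of $A^e$ over $B^e$ and the $B^e$-summand property of $B$ in $A$. The Noetherian hypothesis on $A^e$ is exactly what guarantees the $A^e$-resolution of $A$ can be taken finitely generated, and keeping track of finite generation through the restriction is the delicate bookkeeping step. The Gorenstein verification, by contrast, should be routine given the structural results already established (Theorem~\ref{schmaerz} and the cosemisimple setup), and I would defer its details to the explicit treatment of the Podle\'s sphere.
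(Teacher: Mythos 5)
Your smoothness argument is on the right track up to the point where finite generation enters, and there it has a genuine gap. You correctly reduce to bounding the projective dimension of $B$ as a $B^e$-module: $B$ is a $B^e$-direct summand of $A$ (the $B$-bilinear projection from Lemma~\ref{dutz}), and $A^e$ is projective over $B^e$ (note this needs the \emph{left}-handed version of Theorem~\ref{dirsum}(2) for the $B^{\mathrm{op}}\subset A^{\mathrm{op}}$ factor, which is available here only because $AB^+=B^+A$ forces left faithful flatness), so $\mathrm{proj.dim}_{B^e}(B)\le\mathrm{proj.dim}_{B^e}(A)\le\mathrm{proj.dim}_{A^e}(A)<\infty$. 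But your final claim that $B$ thereby ``inherits a finitely generated projective $B^e$-resolution of finite length'' does not follow: restricting a finitely generated projective $A^e$-resolution of $A$ along $B^e\hookrightarrow A^e$ gives $B^e$-modules that are projective but \emph{not} finitely generated, since $A^e$ is not module-finite over $B^e$ (already for the Podle\'s sphere $A$ has infinite rank over $B$). So ``keeping track of finite generation through the restriction'' is not delicate bookkeeping — it fails. The missing idea, and the one the paper uses, is Noetherian descent: the left Noetherianity of $A^e$ passes to $B^e$ (extend an ascending chain of left ideals of $B^e$ to $A^e$ and use faithful flatness to contract it back), and then, for the cyclic $B^e$-module $B$ of finite projective dimension over the Noetherian ring $B^e$, a finitely generated projective resolution of finite length exists, so the projective dimension in the category of finitely generated $B^e$-modules — which is what Definition~\ref{glaette} actually asks for — is finite. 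Without establishing Noetherianity of $B^e$ itself, your argument only yields $\mathrm{proj.dim}_{B^e}(B)<\infty$, not $\mathrm{dim}(B)<\infty$.

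Two smaller points. The paper's own route is slightly different and cleaner: it observes that $B^e\subset A^e$ is again a quantum homogeneous space satisfying the hypotheses of Lemma~\ref{dutz}, so that lemma applied to $B^e\subset A^e$ gives $\mathrm{gl.dim}(B^e)\le\mathrm{gl.dim}(A^e)<\infty$ at once; your restriction-of-resolutions argument is a workable substitute for that step, provided you justify the left-handed projectivity noted above. Finally, the framing of your proposal is off: Corollary~\ref{wirklich} asserts only that $B$ is smooth, so it is not ``deduced from Theorem~\ref{main}'' and the entire Cohen--Macaulay/Gorenstein discussion is irrelevant to it; that verification belongs to the proof of Theorem~\ref{appli}, not here.
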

 \begin{proof}
If $B \subset A$ is as in the lemma, then
so is $B^e \subset A^e$, hence the lemma gives 
$\mathrm{gl.dim}(B^e) \le
\mathrm{gl.dim}(A^e)$. Therefore, 
$\mathrm{gl.dim}(A^e)<\infty$ implies that 
the left $B^e$-module
$B$ has finite projective dimension. 
Finally, the left Noetherianity of $A^e$ implies that of $B^e$ 
(apply 
$A^e \otimes_{B^e} \cdot$ to an ascending chain of left
ideals in $B^e$ and use faithful flatness). 
Therefore, the projective dimension 
of the finitely generated $B^e$-module $B$ will
coincide with its projective dimension in the
category of finitely generated $B^e$-modules.
\end{proof}

\subsection{Some (counter)examples}\label{countex}
Before entering the discussion of the
Podle\'s sphere let us mention here three simpler
but instructive examples.

First of all, every Hopf subalgebra $B
\subset A$ is
in particular a right coideal
subalgebra. If $A=U(\mathfrak{g})$ and
$B=U(\mathfrak{h})$ are universal
enveloping algebras of
finite-dimensional Lie algebras
$\mathfrak{h} \subset \mathfrak{g}$, 
then the Poincar\'e-Birkhoff-Witt
theorem says that $A$ is free over $B$
and hence faithfully flat. 
 However, even the basic example 
of the Borel subalgebra $\mathfrak{h}
:=\mathfrak{b}_+$ in
$\mathfrak{g}:=\mathfrak{sl}(2,k)$
behaves rather badly: the 
characters of $\mathfrak{b}_+$ are in
bijection with $k$ but only one of them
(the counit) extends to $A$. The
dualising bimodule of $A$ is
$A$ without any twist $\sigma$, but that
of $B$ is of the form $B_\sigma$ for a
nontrivial automorphism (see \cite{bz}, this 
example was suggested by Ken
Brown to me). Note also that
$AB^+=B^+A$, but $B$ is not as a
$B$-bimodule a direct summand in $A$.

Secondly, consider $B=k[y]$ and for $A$
the Hopf algebra obtained by adding a
generator $x$ satisfying 
$$
		  x^2=1,\quad xy=-yx,
$$
so $A$ is the smash (aka crossed or
semidirect) product $B \rtimes
\mathbb{Z}_2$ of $B$ by
the automorphism that sends $y$ to
$-y$. The Hopf algebra structure is
given by
$$
		  \Delta (x) = x \otimes x,\quad
		  \Delta (y)=1 \otimes y+y \otimes
		  x,
$$
$$
		  \varepsilon (x)=1,\quad
		  \varepsilon (y)=0,\quad
		  S(x)=x,\quad S(y)=-yx.
$$ 
The monomials $\{y^i,xy^i\,|\,i \ge 0\}$
form a $k$-vector space basis of $A$, so
$A$ is free over $B$ with basis
$\{1,x\}$. In particular, $B \subset A$
is faithfully flat. 
In this example one can verify directly that 
$H^i(B,M) \simeq H_{1-i}(B,M)$ for all
$B$-bimodules $M$, and that 
$B$ is Gorenstein with $\chi =
\varepsilon$. However,  
$\sigma (b)=\chi (b_{(1)})S^2(b_{(2)})=S^2(b)$ 
is not the identity automorphism since
$$
		  S^2(y)=-S(yx)=-S(x)S(y)=xyx=-y.
$$

These examples show that even if $B$ satisfies
Poincar\'e duality it can be difficult
to read off $\omega$ from the Hopf-algebraic
data given. In particular it can happen
that the dualising bimodules of both $A$
and $B$ are of the form $A_\sigma$ and
$B_\tau$, but one can have $\tau =
\mathrm{id}_B,\sigma \neq
\mathrm{id}_A$ or conversely $\sigma
\neq \mathrm{id}_A,\tau =
\mathrm{id}_B$.

Finally, we would like to mention that the cusp $X
\subset k^2$ given by the equation
$x^2=y^3$ is also a quantum homogeneous space
although it is surely not a homogeneous
space of an algebraic group since it is
not smooth. The ambient Hopf algebra is 
again a skew-polynomial ring $A=B \rtimes
\mathbb{Z}$, $B=k[X]$, that is denoted by 
$B(1,1,2,3,q)$ in \cite{kenny7},
Construction~1.2. Therein the notation
is exactly the opposite of ours, their
$A$ is our $B$ and vice versa.  

\section{Application}\label{applikazie} 
\subsection{The standard Podle\'s sphere}\label{stpodl} 
For the rest of the paper we fix $k=\mathbb{C}$,
$q \in k^*$ is 
not a root of unity, and
$A$ is the standard quantised
coordinate ring of $SL(2,k)$ (see
e.g.~\cite{chef} for background information). This is the
Hopf algebra with
algebra generators $a,b,c,d$, defining relations
$$
		  ab=qba, \quad
		  ac=qca, \quad
		  bc=cb,\quad
		  bd=qdb, \quad
		  cd=qdc,
$$
$$
		  ad-qbc=1, \quad
		  da - q^{-1} bc=1
$$
and the coproduct, counit, and antipode determined by 
$$ 
		  \Delta(a)=a \otimes a + b \otimes c,\quad 
		  \Delta(b)=a \otimes b+b \otimes d,
$$
$$
		  \Delta(c)=c \otimes a+d \otimes c,\quad 
		  \Delta(d)=c \otimes b+d \otimes d,
$$
$$
		  \varepsilon(a)=\varepsilon(d)=1,\quad 
		  \varepsilon(b)=\varepsilon(c)=0,
$$
$$
		  S(a)=d,\quad S(b)=-q^{-1}b,\quad S(c)=-qc,\quad S(d)=a.
$$

It follows from these relations that there is a unique
Hopf algebra quotient 
$$
		  \pi : A \rightarrow C:=k[z,z^{-1}],\quad
		  \pi (a)=z,\quad \pi(d)=z^{-1},\quad
		  \pi (b)=\pi (c)=0,
$$
where the Hopf algebra structure 
of $ k [z,z^{-1}]$ is determined by 
$\Delta (z)=z \otimes z$, that is, $C$ is the
coordinate ring of $T=k^*$, and the map 
$ \pi $ would correspond for $q=1$ to the embedding of
$T$ as a maximal torus into $SL(2,k)$.

By Corollary~\ref{schneller}, $ \pi $ gives rise
to a quantum homogeneous space $B$ as in
(\ref{edwin}). This subalgebra deforms 
the coordinate ring of 
$T \setminus \! SL(2,k)$ and was
discovered by Podle\'s
\cite{Po} and hence is referred to by
most authors as the (standard)
Podle\'s quantum sphere. The elements 
$$	  
		  y_{-1}:=
		  ca,\quad
		  y_0:= bc,\quad
		  y_1:= bd
$$
generate $B$ as an algebra, and $B$ can be
characterised abstractly as the algebra with three
generators $y_{-1},y_0,y_1$ and 
defining relations
\begin{equation}\label{tkkg}
	y_0y_{\pm 1} = q^{\pm 2} y_{\pm 1}y_0,\quad
	y_{\pm 1}y_{\mp 1} = 
	 q^{\mp 2} y_0^2 + q^{\mp 1} y_0,  
\end{equation} 
see \cite{dijkhuizen,mas,Po}.

\subsection{The Koszul resolution of the $B$-module
 $k$}\label{koszu} 
We will construct a free resolution of
the $B$-module $k$ (with action given by
$\varepsilon$) by using the probably simplest case of 
Priddy's noncommutative Koszul
resolutions \cite{priddy}:
\begin{lemma}\label{koskom}
Let $B$ be a $k$-algebra and assume
 $z_{\pm 1} \in B$ are such that
\begin{enumerate}
\item $z_{-1}z_1=\lambda z_1z_{-1}$ for some $\lambda \in k$,  
\item $az_{-1}=0$ implies $a=0$ for all $a \in B$
	  and
\item $\nu (az_1)=0$ implies $\nu (a)=0$ for all
		$\nu (a):=a \,\mathrm{ mod }\, Bz_{-1} \in B/Bz_{-1}$. 
\end{enumerate} 
Then the chain complex $K_\bullet:=K_\bullet(z_1,z_{-1})$
given by
$$
		  0 \rightarrow B \rightarrow B
		  \oplus B \rightarrow B
		  \rightarrow 0
$$
with nontrivial boundary maps
$$
		  a \mapsto (az_{-1},-\lambda az_1),\quad
		  (b,c) \mapsto bz_1+cz_{-1}
$$
is a free resolution of 
$B/I$, $I:=Bz_1+Bz_{-1}$.
\end{lemma}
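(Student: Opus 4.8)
The plan is to verify directly that $K_\bullet(z_1,z_{-1})$ is exact in degrees $0$ and $1$ and that the map in degree $2$ is injective; exactness in degree $0$ is immediate since $\mathrm{im}(B\oplus B\to B)=Bz_1+Bz_{-1}=I$ by definition, so the cokernel is $B/I$. The substantive content is the exactness in the middle and on the left, and the obvious identity $(a\mapsto(az_{-1},-\lambda az_1))$ composed with $((b,c)\mapsto bz_1+cz_{-1})$ gives $az_{-1}z_1-\lambda az_1z_{-1}=0$ by hypothesis (1), so $K_\bullet$ is at least a complex.

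First I would treat injectivity of $a\mapsto(az_{-1},-\lambda az_1)$: if $az_{-1}=0$ then $a=0$ by hypothesis (2) (note we do not even need the second component, so the lemma is a bit asymmetric here; if $\lambda=0$ one should read the complex accordingly, but the interesting cases have $\lambda\neq0$). Next, for exactness at the middle term, suppose $(b,c)\in B\oplus B$ satisfies $bz_1+cz_{-1}=0$. I want to produce $a\in B$ with $az_{-1}=b$ and $-\lambda az_1=c$. Passing to $B/Bz_{-1}$ via $\nu$, the relation gives $\nu(bz_1)=0$, whence $\nu(b)=0$ by hypothesis (3), i.e.\ $b=az_{-1}$ for some $a\in B$; moreover this $a$ is unique by hypothesis (2). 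It then remains to check $c=-\lambda az_1$: substituting $b=az_{-1}$ into $bz_1+cz_{-1}=0$ and using hypothesis (1) gives $\lambda az_1z_{-1}+cz_{-1}=0$, i.e.\ $(\lambda az_1+c)z_{-1}=0$, and hypothesis (2) forces $c=-\lambda az_1$. Hence $(b,c)$ is the image of $a$, establishing exactness at the middle.

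The main obstacle — really the only place any thought is needed — is organising the middle-degree argument so that hypotheses (2) and (3) are invoked in the right order: (3) is what lets one lift $b$ into $Bz_{-1}$ after projecting, and (2) (applied twice, once for uniqueness of the lift and once more after re-substituting) is what pins down the second coordinate $c$. The rest is bookkeeping with the single commutation relation (1). Finally, the freeness statement is automatic since every term of $K_\bullet$ is a finite direct sum of copies of $B$; so $K_\bullet\to B/I$ is the desired free resolution, of length $2$.
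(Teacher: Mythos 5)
Your proof is correct. The complex property follows from hypothesis (1), injectivity in degree $2$ from (2), exactness in degree $0$ is definitional, and your middle-degree chase --- project to $B/Bz_{-1}$, use (3) to write $b=az_{-1}$, then re-substitute and use (2) to force $c=-\lambda az_1$ --- settles $H_1(K)=0$. The paper proves the same thing by a slightly more structural route: it introduces the subcomplex $\tilde K\colon 0\to B\to Bz_{-1}\oplus B\to Bz_{-1}\to 0$ of $K$, notes that the quotient complex $K/\tilde K$ has as its only nontrivial map $\nu(a)\mapsto\nu(az_1)$ on $B/Bz_{-1}$, which is injective by (3), shows $H_1(\tilde K)=0$ directly from (2) via $(\lambda bz_1+c)z_{-1}=0$, and then invokes the long exact homology sequence of $0\to\tilde K\to K\to K/\tilde K\to 0$. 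Your element chase uses exactly the same two inputs in the same roles ((3) to lift modulo $Bz_{-1}$, (2) to pin down the $Bz_{-1}$-component), so nothing is lost; the paper's packaging is the one that scales to longer Koszul-type complexes, while your direct verification is shorter and self-contained for this length-two case. One small remark: your parenthetical worry about $\lambda=0$ is unnecessary --- the argument you wrote works verbatim for any $\lambda\in k$.
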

\begin{proof}
We clearly have 
$H_0(K)=B/I$ by very definition
and  
$H_2(K)=0$ by assumption (2).
Now consider the subcomplex 
$$
		  \tilde K := 0 \rightarrow B
		  \rightarrow Bz_{-1} \oplus B
		  \rightarrow Bz_{-1} \rightarrow 0
$$
of $K$ and the quotient complex
 $K/\tilde K$ which is of
the form
$$
		  0 \rightarrow 0
		  \rightarrow B/Bz_{-1} 
		  \rightarrow B/Bz_{-1} \rightarrow 0.
$$
Its one nontrivial boundary map map is
$$
		  B/Bz_{-1} \rightarrow B/Bz_{-1},\quad
		  \nu (a) \mapsto \nu (az_1),
$$
so assumption (3) means $H_1(K/\tilde
 K)=0$. Furthermore, we have $H_1(\tilde
 K)=0$: a cycle is an element
$(bz_{-1},c) \in Bz_{-1} \oplus B$ with
$$
		  0=bz_{-1}z_1+cz_{-1}=(\lambda bz_1+c)z_{-1},
$$
so assumption (2) gives  
$c=-\lambda bz_1$, hence
$
		  (bz_{-1},c)=(bz_{-1},-\lambda bz_1)
$
is a boundary. Considering the long exact
homology sequence derived from the short exact sequence 
$		  0 \rightarrow \tilde K
		  \rightarrow K \rightarrow
		  K/\tilde K \rightarrow 0
$
now yields $H_1(K)=0$.
\end{proof}

From now on let $B$ be again the Podle\'s
sphere. Then the above gives:

\begin{thm}
The left $B$-module $ k $ admits a free resolution
of the form $K_\bullet(z_1,z_{-1})$ with
$z_{\pm 1}:=y_{\pm 1}+y_0$, $\lambda :=q^2$.
\end{thm}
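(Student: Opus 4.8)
The plan is to verify the three hypotheses of Lemma~\ref{koskom} for the Podle\'s sphere $B$ with the specific choice $z_{\pm 1}:=y_{\pm 1}+y_0$ and $\lambda:=q^2$, and then to identify the quotient $B/I$ with $k$. First I would check the commutation relation~(1): using the defining relations~(\ref{tkkg}), namely $y_0y_{\pm 1}=q^{\pm 2}y_{\pm 1}y_0$, I would compute $z_{-1}z_1=(y_{-1}+y_0)(y_1+y_0)$ and $z_1z_{-1}=(y_1+y_0)(y_{-1}+y_0)$ directly, expanding into the four terms $y_{-1}y_1,\ y_{-1}y_0,\ y_0y_1,\ y_0^2$ (and the mirrored versions), substituting $y_{\pm1}y_{\mp1}=q^{\mp2}y_0^2+q^{\mp1}y_0$, and confirming that $z_{-1}z_1=q^2\,z_1z_{-1}$. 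This is a routine but essential computation; the factor $q^2$ should drop out cleanly once the quadratic-in-$y_0$ terms are collected.

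Next I would identify $B/I$ where $I=Bz_1+Bz_{-1}$. The point is that modulo $I$ we have $y_1\equiv -y_0$ and $y_{-1}\equiv -y_0$, and then the relation $y_1y_{-1}=q^{-2}y_0^2+q^{-1}y_0$ forces $y_0^2\equiv q^{-2}y_0^2+q^{-1}y_0$ in $B/I$, i.e. $(1-q^{-2})y_0^2\equiv q^{-1}y_0$; since $q$ is not a root of unity this is a nondegenerate relation, and combined with the analogous relation from $y_{-1}y_1$ one deduces $y_0\equiv 0$, hence $B/I\simeq k$ with the counit action (one checks $\varepsilon(y_{\pm1})=\varepsilon(y_0)=0$, so $I\subseteq\ker\varepsilon|_B$, and the above shows equality). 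I would make this precise by working in the PBW-type basis of $B$ (ordered monomials in $y_{-1},y_0,y_1$) so that the dimension count is transparent.

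The main obstacle is verifying hypotheses~(2) and~(3): that $z_{-1}$ is a non-zero-divisor, i.e. $az_{-1}=0\Rightarrow a=0$, and that right multiplication by $z_1$ is injective on $B/Bz_{-1}$. For~(2) I would use the fact that $B$ is a Noetherian domain — this follows from its description as an iterated Ore extension, or from its embedding in $A=\mathbb{C}_q[SL(2)]$ which is a domain — so that every nonzero element is a non-zero-divisor; in particular $z_{-1}\ne 0$ gives~(2) immediately. For~(3), the cleanest route is to exhibit a vector-space basis of $B/Bz_{-1}$ and show directly that the operator $\nu(a)\mapsto\nu(az_1)$ is injective; concretely, since $z_{-1}=y_{-1}+y_0$, modulo $Bz_{-1}$ one can eliminate (say) $y_{-1}$ in favour of $-y_0$, so that $B/Bz_{-1}$ is spanned by ordered monomials in $y_0$ and $y_1$ alone, and then multiplication by $z_1=y_1+y_0$ is seen to be injective by a leading-term argument in a suitable filtration — right multiplication by $y_1$ already looks injective on such monomials, and adding the lower-order $y_0$ term cannot create a kernel. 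Once~(1)--(3) and the identification $B/I\simeq k$ are in place, Lemma~\ref{koskom} delivers the free resolution $K_\bullet(z_1,z_{-1})$ of the $B$-module $k$ and the theorem follows.
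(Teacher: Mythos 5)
Your overall plan is the same as the paper's: verify hypotheses (1)--(3) of Lemma~\ref{koskom} for $z_{\pm1}=y_{\pm1}+y_0$, $\lambda=q^2$, and identify $B/I$, $I=Bz_1+Bz_{-1}$, with $k$. Points (1) and (2) are fine ($B$ is a domain, e.g.\ as a subalgebra of $\mathbb{C}_q[SL(2)]$; the paper cites \cite{bavula}). For $B/I\simeq k$, note that $I$ is only a \emph{left} ideal, so congruences modulo $I$ may be multiplied on the left but cannot be substituted into the right-hand factor of a product; your derivation of $y_0^2\equiv q^{-2}y_0^2+q^{-1}y_0$ does exactly that. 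This step is repairable, and the paper does it in one line via the identity $q^{-1}y_{-1}z_1-qy_0z_{-1}=y_0$, which shows $y_0\in I$, hence $y_{\pm1}=z_{\pm1}-y_0\in I$, hence $I=B^+$ and $B/I\simeq k$ through $\varepsilon$.

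The genuine gap is hypothesis (3), which is the technical heart of the theorem and which your sketch does not prove. First, the ordered monomials in $y_0,y_1$ only \emph{span} $B/Bz_{-1}$; they are not linearly independent there, e.g.\ $\nu(y_0y_1)=q^2\nu(y_1y_0)=-q^2\nu(y_1y_{-1})=-\nu(y_0^2)-q\nu(y_0)$, so a ``leading term'' with respect to this spanning set is not well defined. Producing an actual basis (the paper's $\{\nu(y_0^{i+1}),\nu(y_1^j)\}$) requires a separate linear-independence argument, which the paper carries out using the $\mathbb{Z}$-grading of $B$, the domain property, and the auxiliary algebra quotient $B/(By_{-1}+By_0)$. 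Second, the heuristic that right multiplication by $y_1$ is injective and the $y_0$-summand of $z_1$ is a harmless lower-order perturbation fails: on pure powers of $y_0$ one has $\nu(y_0^iy_1)=q^2\nu(y_0^{i-1}y_1y_0)=-q^2\nu(y_0^{i-1}y_1y_{-1})=-\nu(y_0^{i+1})-q\nu(y_0^i)$, i.e.\ multiplication by $y_1$ does not create a new leading monomial at all but collapses back onto $y_0$-powers, and in $\nu(y_0^iz_1)=\nu(y_0^iy_1)+\nu(y_0^{i+1})$ the top-degree contributions of the two summands of $z_1$ interfere (with the sign convention $\nu(ay_0)=-\nu(ay_{-1})$ they cancel outright, leaving $-q\nu(y_0^i)$). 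So injectivity of $\nu(a)\mapsto\nu(az_1)$ is not a filtration formality; it holds only because explicit scalar coefficients survive, which is precisely why the paper computes the matrix of this map on its basis and reads injectivity off its explicit (block-)triangular form. Until you supply a basis of $B/Bz_{-1}$ and this explicit computation, hypothesis (3) is unverified and the proposed proof is incomplete.
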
 
\begin{proof}
It is easily seen that $B^+:=B \cap \mathrm{ker}\,
\varepsilon$ is generated as a left ideal by the
elements $y_n$. But since one has
$q^{-1}y_{-1}(y_1+y_0)
-q y_0(y_{-1}+y_0)=
y_0$,
$B^+$ is in fact generated as a left
 ideal by the two elements
$z_{\pm 1}$.

One verifies directly that
$z_{-1}z_1=q^2z_1z_{-1}$ which is
 assumption (1) in
Lemma~\ref{koskom}. Secondly, $B$ is 
a domain (see
 e.g.~\cite{bavula}), so assumption (2)
 holds as well. For (3) we turn $B$ into a
 $\mathbb{Z}$-graded algebra
by assigning to $y_i$ the degree $i$
which is compatible with the defining
relations (\ref{tkkg}). Then we have
$$
		  B=\bigoplus_{j \in \mathbb{Z}}
 B_j,\quad B_iB_j \subset B_{i+j},\quad
 B_j:=\mathrm{span}_k\{e_{ij}\,|\,
		  i \ge 0\},
$$
where 
$$
	e_{ij}:=\left\{
	\begin{array}{ll}
	y_0^iy_1^j \quad& j \ge 0,\\
	y_0^iy_{-1}^{-j}\quad& j<0,
	\end{array}\right.
	i \in \mathbb{N}_0,j \in \mathbb{Z},  
$$
and these form a vector space basis of $B$. Under 
 $\nu : B \rightarrow B/Bz_{-1}$ we have
\begin{eqnarray}
		  \nu (y_0^iy_{-1}^j)
&=& y_0^iy_{-1}^{j-1} \nu
(y_{-1})=-y_0^iy_{-1}^{j-1} \nu (y_0)
\nonumber\\ 
&=&-y_0^i \nu (y_{-1}^{j-1}y_0)
=-q^{2(j-1)}y_0^i \nu (y_0y_{-1}^{j-1})
\nonumber\\ 
&=&-q^{2(j-1)}y_0^{i+1} y_{-1}^{j-2}\nu (y_{-1})
\nonumber\\
&=&q^{2(j-1)}y_0^{i+1} y_{-1}^{j-2}\nu (y_0)
\nonumber\\
&=&q^{2(2j-1-2)}y_0^{i+2} y_{-1}^{j-3}\nu (y_{-1})
\nonumber\\
&=&-q^{2(2j-1-2)}y_0^{i+2} y_{-1}^{j-3}\nu (y_0)
\nonumber\\
&=& \ldots \nonumber\\ 
&=& (-1)^jq^{2((j-1)j-1-2-\ldots-(j-1))}y_0^{i+j-1} y_{-1}^{j-j}\nu (y_0)
\nonumber\\
&=& (-1)^jq^{(j-1)j}\nu (y_0^{i+j}).
\nonumber
\end{eqnarray} 
Similarly we have for $i>0,j>0$
\begin{eqnarray}
\nu (y_0^iy_1^j)
&=& q^{2j}y_0^{i-1}  \nu (y_1^jy_0) =
q^{2j} y_0^{i-1} y_1^j \nu (y_{-1})
\nonumber\\ 
&=& q^{2j} y_0^{i-1} y_1^{j-1} \nu (q^{-2}y_0^2+q^{-1}y_0)
\nonumber\\ 
&=& q^{2j-2} y_0^{i-1} y_1^{j-1} \nu
 (y_0^2)+
q^{2j-1} y_0^{i-1} y_1^{j-1} \nu (y_0)
\nonumber\\ 
&=& q^{-2j+2}\nu
 (y_0^{i+1} y_1^{j-1})+
q \nu (y_0^iy_1^{j-1})
\nonumber\\ 
&=& q^{-2j+2}(q^{-2j+4}\nu
 (y_0^{i+2} y_1^{j-2})+
q \nu (y_0^{i+1}y_1^{j-2}))
\nonumber\\ 
&& +q(q^{-2j+4}\nu
 (y_0^{i+1} y_1^{j-2})+
q \nu (y_0^iy_1^{j-2}))
\nonumber\\ 
&=& q^{-4j+6}\nu
 (y_0^{i+2} y_1^{j-2})+
q^{-2j+3}(1+q^2)\nu (y_0^{i+1}y_1^{j-2})+
q^2 \nu (y_0^iy_1^{j-2})
\nonumber\\ 
&=& q^{-4j+6}
 (q^{-2j+6}\nu
 (y_0^{i+3} y_1^{j-3})+
q \nu (y_0^{i+2}y_1^{j-3})) \nonumber\\ 
&&+q^{-2j+3}(1+q^2)(q^{-2j+6}\nu
 (y_0^{i+2} y_1^{j-3})+
q \nu (y_0^{i+1}y_1^{j-3}))
\nonumber\\ 
&&+
q^2 (q^{-2j+6}\nu
 (y_0^{i+1} y_1^{j-3})+
q \nu (y_0^iy_1^{j-3}))
\nonumber\\ 
&=& q^{-6j+12}\nu
 (y_0^{i+3} y_1^{j-3})+
q^{-4j+7} (1+q^2+q^4)\nu (y_0^{i+2}y_1^{j-3}) \nonumber\\ 
&&+q^{-2j+4}(1+q^2+q^4)\nu (y_0^{i+1}y_1^{j-3})
+
q^3 \nu (y_0^iy_1^{j-3})
\nonumber\\ 
&=& \ldots \nonumber\\ 
&=& \sum_{r=0}^j
 q^{(-2r+1)j+r^2}
		 \left(\begin{array}{c} j \\ r \end{array}\right)_q
		 \nu (y_0^{i+r}) 
%\nonumber\\ 
%&\in& \mathrm{span}_k\{\nu
% (y_0^i),\ldots,\nu(y_0^{i+j})\}
\nonumber
\end{eqnarray}
where we abbreviated
$$
\left(\begin{array}{c} j \\ r
		\end{array}\right)_q:=
1+q^2+q^4+ \ldots + q^{2\left(\begin{array}{c} j \\ r
		\end{array}\right)-2}. 
$$
Thus we have
$$
		  B/Bz_{-1}=\mathrm{span}_k\{\nu(y_0^{i+1}),\nu
(y_1^i)\,|\,i
 \ge 0\}.
$$

These residue classes 
are also linearly independent: assume that
\begin{equation}\label{cococo6}
		  \sum_{i \ge 0} \lambda_i \nu(y_0^i)+
		  \sum_{j \ge 0} \mu_j \nu(y_1^{j+1})=0
\end{equation} 
in $B/Bz_{-1}$. One easily checks that
$$
		  B/(Bz_{-1}+By_0)=B/(By_{-1}+By_0)
$$ 
is an algebra quotient of $B$
(i.e.~that $By_0+By_{-1}$ is a two-sided ideal in
 $B$) and
that it is as such isomorphic to 
the polynomial ring generated
by the residue class of
 $y_1$. Hence the residue classes of
 $y_1^j$ are linearly independent 
in this quotient of $B/Bz_{-1}$. Considering the image of
(\ref{cococo6}) therein thus gives
$$
		  \mu_j=0\quad\forall\,j \ge 0.
$$ 
We are left with 
\begin{equation}\label{tatort}
		  \sum_{i \ge 0} \lambda_i \nu(y_0^i)=0
		  \quad\Leftrightarrow\quad 
		  \sum_{i \ge 0} \lambda_i y_0^i=
		  az_{-1}
\end{equation} 
for some $a \in B$. But 
$\sum_{i \ge 0} \lambda_i y_0^i$ 
is homogeneous of degree
 $0$, $B$ is a domain, and 
$z_{-1}$ is not homogeneous, so the right
hand side can not be homogeneous unless $a =
 0$: if 
$$
		  a=a_{j_0}+\ldots+a_{j_n},\quad
		  a_{j_i} \in B_{j_i} \setminus \{0\},\quad 
		  j_0<\ldots<j_n
$$
is the decomposition of $a$ into
 homogeneous components, then 
$az_{-1}$ has a nonzero component $a_{j_0}y_{-1}$ 
in degree $j_0-1$ and a nonzero component $a_{j_n}y_0$
in degree $j_n$. Thus $a=0$ and since
 the $y_0^i$ are linearly independent in
 $B$ it follows that also 
$$
		  \lambda_i=0\quad\forall\,i \ge 0.
$$

Now we compute the action of he map
\begin{equation}\label{duze}
		  \zeta : B/Bz_{-1} \rightarrow
 B/Bz_{-1},\quad
 \nu (a) \mapsto \nu (az_1)
\end{equation} 
on the basis vectors. We get for $i>0$
\begin{eqnarray}
		  \nu (y_0^iz_1)
&=& \nu (y_0^iy_1)+\nu (y_0^{i+1})
\nonumber\\ 
&=& q^2 y_0^{i-1} \nu (q^{-2}y_0^2+q^{-1}y_0)+\nu (y_0^{i+1})
\nonumber\\
&=&  2\nu (y_0^{i+1})+q \nu (y_0^i)
\nonumber
\end{eqnarray}
%and
%$$
%		  \nu (z_1)=\nu(y_0)+\nu(y_1)
%$$
and for $j \ge 0$
\begin{eqnarray}
		  \nu (y_1^jz_1)
&=& \nu (y_1^{j+1})+\nu (y_1^jy_0)
\nonumber\\ 
&=& \nu (y_1^{j+1})+q^{-2j}\nu (y_0y_1^j)
\nonumber\\ 
&=& \nu (y_1^{j+1})+q^{-2j}
\sum_{r=0}^j
 q^{(-2r+1)j+r^2}
		 \left(\begin{array}{c} j \\ r \end{array}\right)_q
		 \nu (y_0^{1+r}). 
\nonumber
\end{eqnarray} 
So if we abbreviate
$$
		  V_j:=\mathrm{span}_k\{\nu
 (y_0),\ldots,\nu(y_0^{j+1}),\nu(1),\nu(y_1),\ldots,\nu(y_1^j)\},
$$
then we have 
$$
		  B=\bigcup_{j \ge 0} V_j,\quad
		  \zeta (V_j) \subset V_{j+1}
$$
and $\zeta|_{V_j}$ is represented with
 respect to our basis by a matrix of the
 form
$$
\left(\begin{array}{cccccc} 
q &  & & * & \ldots & *\\
 2 & \ddots & & & \ddots & \vdots \\
& \ddots & q & & & * \\
& & 2 & 0 & \ldots & 0\\
& &  & 0 & &\\
& &  & 1 & \ddots &\\
& &  &  & \ddots& 0\\
& &  &  & & 1
		\end{array}\right),  
$$ 
where the $*$ denote nonzero
 entries and all other entries vanish.
Hence $\zeta$ is evidently injective
 (composing $\zeta|_{V_j}$ with the
 canonical projection onto
 $V_{j+1}/\mathrm{span}_k\{\nu(y_0),\nu(1)\}$
 yields an isomorphism of determinant $2^j$)
which is assumption
(3) of Lemma~\ref{koskom}.
\end{proof}

\subsection{The Gorenstein condition} 
From the minimal resolution of $ k $
provided by the Koszul complex one
can read off $\mathrm{Ext}^n_B(k,B)$:
\begin{lemma}\label{scotchpie1}
One has
$\mathrm{Ext}^n_B(k,B)=0$ for $n \neq 2$ and 
$\mathrm{Ext}^2_B(k,B) \simeq k$. 
The resulting character $ \chi $ of $B$ is equal to
$ \varepsilon $.
\end{lemma}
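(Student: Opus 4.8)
The plan is to apply the Koszul resolution $K_\bullet=K_\bullet(z_1,z_{-1})$ from the previous theorem and compute $\mathrm{Ext}^\bullet_B(k,B)$ directly as the cohomology of the dual complex $\mathrm{Hom}_B(K_\bullet,B)$. Since $K_\bullet$ has the shape $0 \to B \to B \oplus B \to B \to 0$ with the left $B$-module $k$ in degree zero, applying $\mathrm{Hom}_B(-,B)$ yields a cochain complex $0 \to B \to B \oplus B \to B \to 0$ of right $B$-modules (using that $\mathrm{Hom}_B(B,B) \simeq B$ as a right module via evaluation at $1$), whose coboundary maps are transposes of the boundary maps. First I would write these out: the map $B \to B\oplus B$ becomes right multiplication $a \mapsto (z_1 a, z_{-1} a)$ and the map $B\oplus B \to B$ becomes $(b,c) \mapsto z_{-1} b - q^2 z_1 c$ (up to the obvious signs inherited from Lemma~\ref{koskom}). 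Thus $\mathrm{Ext}^0_B(k,B)$ is the right annihilator of $\{z_1,z_{-1}\}$ in $B$, $\mathrm{Ext}^2_B(k,B)$ is $B/(z_{-1}B + z_1 B)$, and $\mathrm{Ext}^1$ is the homology in the middle.

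The vanishing of $\mathrm{Ext}^0$ and $\mathrm{Ext}^1$ is handled by essentially the same arguments as in the previous theorem but on the right: $B$ is a domain, so $z_1 a = 0$ forces $a = 0$, giving $\mathrm{Ext}^0_B(k,B)=0$; for $\mathrm{Ext}^1$ one repeats the short-exact-sequence-of-complexes trick of Lemma~\ref{koskom}, now with the subcomplex built from $z_{-1}B$ rather than $Bz_{-1}$, using the right-handed analogue of condition (3), i.e.\ that right multiplication by $z_1$ is injective on $B/z_{-1}B$. This right-handed injectivity follows from the left-handed statement already proved by applying a suitable anti-automorphism of $B$, or simply by rerunning the explicit basis computation with the roles of left and right swapped; the relations (\ref{tkkg}) are symmetric enough that this goes through verbatim. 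Alternatively, since $B$ has finite global dimension $2$ by Lemma~\ref{wudi} (or directly from the Koszul resolution being length $2$), and since the Euler characteristic argument pins the total dimension, one knows a priori that $\mathrm{Ext}^n_B(k,B)$ can be nonzero only in degree $\le 2$, and a dimension count in each graded piece forces concentration in degree $2$.

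The remaining work is to identify $\mathrm{Ext}^2_B(k,B) = B/(z_{-1}B+z_1B)$ with $k$ and to pin down the character $\chi$. For the first part I would show that $z_{\pm 1}$ generate $B^+$ as a \emph{right} ideal, exactly mirroring the computation $q^{-1} y_{-1}(y_1+y_0) - q y_0(y_{-1}+y_0) = y_0$ from the previous proof (again by symmetry of the relations, or by applying an anti-automorphism), so that $z_{-1}B + z_1 B = B^+$ and the quotient is $k\cdot 1 = k$. For the character: the right $B$-action on $\mathrm{Ext}^2_B(k,B)$ is, via the identification above, induced by the right action of $B$ on $B/B^+ = k$, which is precisely the counit $\varepsilon$; hence $\chi = \varepsilon$. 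The main obstacle is purely bookkeeping — making sure the left/right dualisation of the Koszul complex is done with the correct module structures and signs, and justifying the right-handed versions of conditions (2)–(3) — but this is routine given that all defining relations of $B$ are preserved by an evident anti-automorphism (e.g.\ $y_i \mapsto y_{-i}$ composed with suitable scalars), so no genuinely new estimate is needed beyond what the previous theorem already established.
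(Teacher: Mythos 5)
Your proposal is correct and follows essentially the same route as the paper: dualise the Koszul resolution, identify the coboundaries as left multiplications by $z_{\pm 1}$ (your map differs from the paper's only by an overall nonzero scalar), get exactness in degrees $0$ and $1$ from the domain property together with the $B^{\mathrm{op}}$-version of Lemma~\ref{koskom}, and identify the degree-$2$ cokernel as $B/(z_1B+z_{-1}B)=B/B^+\simeq k$ with right action $\varepsilon$. Your anti-automorphism $y_i \mapsto y_{-i}$ (which swaps $z_1 \leftrightarrow z_{-1}$ while reversing sides, hence carries the proved condition (3) exactly onto the needed right-handed one) is a clean justification of the ``$B$ replaced by $B^{\mathrm{op}}$'' step that the paper leaves implicit.
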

\begin{proof}
Apply $\mathrm{Hom}_B(\cdot,B)$ to the Koszul
complex and identify 
$ \mathrm{Hom}_B(B,B) \simeq B$. This
gives the cochain complex
$$
		  0 \leftarrow B \leftarrow B \oplus B
		  \leftarrow B \leftarrow 0
$$
of right $B$-modules whose 
two nontrivial coboundary maps are given by
$$
		 f \mapsto 
		 (z_1f,z_{-1}f),\quad  
		  (f,g) \mapsto q^{-1} z_{-1}f-q z_1g.
$$
The exactness of this complex in degree 0 and 1
can be shown as the exactness of the
 Koszul complex using  
Lemma~\ref{koskom} (with $B$ replaced
 by $B^\mathrm{op}$).
In degree 2, the cohomology is $B$ divided by
the right ideal generated by 
$z_{\pm 1}$. The result follows since with
$qz_1 y_{-1}-q^{-1}z_{-1} y_0=y_0$ one
 easily deduces that this ideal is again
 $\mathrm{ker}\, \varepsilon$.
\end{proof}

Thus the relevant twisting automorphism is
$$
		   \sigma (f) = \chi (f_{(1)}) S^2(f_{(2)}) =
S^2(f)$$ 
which is explicitly given by
$$
		  \sigma (y_{-1})=q^2 y_{-1},\quad
 		  \sigma (y_0) = y_0,\quad 
		  \sigma (y_1) = q^{-2}y_1.
$$
Note this is also the restriction of Woronowicz's modular
automorphism (see e.g.~\cite{chef,tom}
for more information) to
$B$.

\subsection{The smoothness condition}
The smoothness of $B$ follows from 
Corollary~\ref{wirklich} since for this example 
$A^e \simeq k_q[SL(2) \times SL(2)]$
is left Noetherian with 
global dimension 4, see \cite{kenny2} and the
references therein.
 
\subsection{Determining $\omega$}
We now know that Theorem~\ref{mainneu}
applies to $B$ with $\mathrm{dim}(B)=2$,
and that $B$ acts trivially (via
$\varepsilon$) on
$\mathrm{Ext}_B^2(k,B) \simeq k$ so that
the automorphism $\sigma$ from
Theorem~\ref{schmaerz} equals
$S^2$. Equation (\ref{knights}) 
becomes trivial, so
$g$ therein could be any of
the group-like elements in 
$C=k[z,z^{-1}]$, that is, an arbitrary monomial 
$z^n$ for some $n \in \mathbb{Z}$. So according to
Theorem~\ref{schmaerz}, $\omega$ is
isomorphic as an object of 
${}_B \M^A_{B,S^2}$ to $\omega_{n,1}$,
where we define for
$m,n \in \mathbb{Z}$
$$
		  \omega_{n,m}:=\{a \in
		  A_{S^{2m}}\,|\,\pi(a_{(1)})
		  \otimes a_{(2)}=z^n \otimes a\}
		  \in {}_B \M^A_{B,S^{2m}}.
$$
As $B$-bimodules, we have isomorphisms
$$
		  \omega_{n,m} \simeq
		  (\omega_{n,0})_{S^{2m}} \simeq
		  (\omega_{n,0}) \otimes_B
		  B_{S^{2m}}=
		  \omega_{n,0} \otimes_B \omega_{0,m}
$$
and (as a special case of (\ref{wuetend}))
$$
		  \omega_{l,0} \otimes_B
		  \omega_{n,0} \simeq \omega_{l+n,0}.
$$
Finally, the $B$-bimodule isomorphism
$S^{2m} : {}_{S^{-2m}}A \rightarrow
A_{S^{2m}}$ 
restricts to a $B$-bimodule isomorphism
$$
		  {}_{S^{-2m}}(\omega_{n,0}) \simeq \omega_{n,m},
$$ 
and combining these three equations we
see that as $B$-bimodules we have
\begin{equation}{\label{fridge}}
		 \omega_{n,m} \otimes_B
		 \omega_{i,j} \simeq \omega_{n+i,m+j}. 
\end{equation}

Furthermore, we obtain by direct
computation:
\begin{lemma}\label{wryi}
One has 
$$
		  H^0(B,\omega_{i,j}) \simeq \left\{
\begin{array}{ll}
k\quad & i=2(m-j) \mbox{ for some }  0
 \le m \le 2j ,\\
0\quad &\mbox{otherwise.}
\end{array}\right.  
$$
\end{lemma}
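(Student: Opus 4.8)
The plan is to compute $H^0(B,\omega_{i,j})$ by hand as a space of twisted bimodule invariants. Since $H^0(B,M)=\mathrm{Hom}_{B^e}(B,M)$ and a $B$-bimodule map $B\to M$ is determined by the image $m$ of $1$, one identifies $H^0(B,\omega_{i,j})$ with $\{m\in\omega_{i,j}\mid bm=m\cdot b\ \text{for all }b\in B\}$, where $m\cdot b=m\,S^{2j}(b)$ is the twisted right action inherited from $A_{S^{2j}}$. As $\{b\in B\mid bm=m\cdot b\}$ is a subalgebra and $B$ is generated by $y_{-1},y_0,y_1$, it is enough to impose the three equations $y_k\,m=m\,S^{2j}(y_k)=q^{-2jk}\,m\,y_k$ for $k\in\{-1,0,1\}$ --- here I use $S^{2}(y_k)=q^{-2k}y_k$, already computed above --- together with the defining condition $\pi(m_{(1)})\otimes m_{(2)}=z^i\otimes m$ for membership in $\omega_{i,j}$.

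Next I would run these equations against a PBW basis of $A$, say $\{b^pc^qa^r\}\cup\{b^pc^qd^r\mid r\ge 1\}$. A short calculation with the relations of $\mathbb{C}_q[SL(2)]$ gives $y_0\,(b^pc^qa^r)=b^{p+1}c^{q+1}a^r$ but $(b^pc^qa^r)\,y_0=q^{2r}b^{p+1}c^{q+1}a^r$, and similarly $y_0\,(b^pc^qd^r)=q^{2r}(b^pc^qd^r)\,y_0$; as the images are distinct basis vectors and $q$ is not a root of unity, $y_0\,m=m\,y_0$ forces $m$ into the commutative polynomial subalgebra $k[b,c]=\mathrm{span}_k\{b^uc^v\}$. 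Writing $m=\sum_{u,v}\lambda_{u,v}b^uc^v$, the coaction condition then restricts the sum to pairs with $u-v=i$, since $b\in\omega_{1,0}$ and $c\in\omega_{-1,0}$.

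It remains to impose the $y_{\pm1}$ equations. With $db=q^{-1}bd$, $dc=q^{-1}cd$, $ab=qba$ and $ac=qca$ one computes $y_1\,(b^uc^v)=q^{-(u+v)}\,(b^uc^v)\,y_1$ and $y_{-1}\,(b^uc^v)=q^{\,u+v}\,(b^uc^v)\,y_{-1}$, each side again being a scalar multiple of a single basis vector ($b^{u+1}c^vd$, resp.\ $b^uc^{v+1}a$). Hence $y_1\,m=q^{-2j}\,m\,y_1$ and $y_{-1}\,m=q^{2j}\,m\,y_{-1}$ both force $\lambda_{u,v}=0$ unless $u+v=2j$. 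Combined with $u-v=i$ this leaves a single possible term, at $(u,v)=(j+i/2,\,j-i/2)$, which is a pair of non-negative integers exactly when $i$ is even and $-2j\le i\le 2j$, i.e.\ $i=2(\ell-j)$ with $0\le\ell\le 2j$ (and then $\ell=u$). In that case $m=b^{\ell}c^{\,2j-\ell}$ spans a one-dimensional space, and it satisfies $y_0\,m=m\,y_0$ automatically; in all other cases $H^0(B,\omega_{i,j})=0$. This is the claimed description.

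The argument is computational rather than deep; the steps needing care are fixing the module structure of $\omega_{i,j}$ correctly (the $S^{2j}$-twist of the right action, and the eigenvalues $S^{2j}(y_k)=q^{-2jk}y_k$) and keeping the powers of $q$ straight in the commutation relations. The only genuinely structural input is the identity $\{m\in A\mid y_0\,m=m\,y_0\}=k[b,c]$, which collapses the search to finite-dimensional linear algebra inside a commutative polynomial subalgebra; after that the answer is forced.
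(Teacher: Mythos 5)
Your proposal is correct and follows essentially the same route as the paper: identify $H^0(B,\omega_{i,j})$ with the twisted centre, use commutation with $y_0$ (equivalently the $\mathbb{Z}$-grading with $A_0=\mathrm{span}_k\{b^uc^v\}$) to reduce to $k[b,c]$, then impose the $y_{\pm1}$-relations $y_{\pm1}b^uc^v=q^{\mp(u+v)}b^uc^vy_{\pm1}$ together with the coaction condition $u-v=i$ to single out the monomial with $u+v=2j$. The only difference is that you verify the reduction to $k[b,c]$ by explicit PBW computation rather than quoting the grading, which is a harmless elaboration.
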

\begin{proof}
The defining relations of
 $A$ imply that the monomials 
$$
	f_{lmn}:=\left\{
	\begin{array}{ll}
	a^lb^mc^n \quad& l \ge 0,\\
	d^{-l}b^mc^n\quad& l<0,
	\end{array}\right.
	l \in \mathbb{Z},m,n \in \mathbb{N}_0 
$$
form a vector space basis, that $A$ is a $\mathbb{Z}$-graded 
$B$-bimodule, 
$$
		  A=\bigoplus_{l \in
 \mathbb{Z}} A_l,\quad
 B_iA_lB_j \subset A_{i+j+l},\quad
 A_l:=\mathrm{span}_k\{f_{lmn}\,|\,m,n
 \ge 0\},
$$
and that
$$
		  A_l=\{f \in A\,|\,y_0f=q^{2l}fy_0\}.
$$
Furthermore, the explicit formulas for
the coproduct of the generators $a,b,c,d$ given in
 Section~\ref{stpodl} and the fact that $\pi
 (f_{lmn})=\delta_{m,0}\delta_{n,0}z^l$ show that
$$
		  \omega_{i,j}=\mathrm{span}_k\{f_{lmn}\,|\,i=l+m-n\}
 \subset A_{S^{2j}}.
$$
The zeroth Hochschild cohomology 
is by very definition isomorphic to the centre
of the coefficient bimodule (identify 
$\varphi \in \mathrm{Hom}_{B^e}(B,M)$
 with 
$\varphi (1) \in M$), and since
 $S^2(y_0)=y_0$, the last two equations imply
$$
		  H^0(B,\omega_{i,j}) \subset
 \omega_{i.j} \cap A_0=
 \mathrm{span}_k\{b^mc^n\,|\,m-n=i\}.
$$ 
And from $S^2(y_{\pm 1})=q^{\mp 2}y_{\pm
 1}$ and
$$
		  y_{\pm 1} b^mc^n=
		  q^{\mp(m+n)}b^mc^ny_{\pm 1}
$$
we deduce now that 
$$
		  H^0(B,\omega_{i,j}) =
 \mathrm{span}_k\{b^mc^n\,|\,m-n=i,m+n=2j\}.
$$ 
The claim follows by elementary arithmetics.
\end{proof}

Now we can finish the
proof of Theorem~\ref{appli}:
\begin{proof}[End of proof of Theorem~\ref{appli}]
By Theorem~\ref{mainneu} and
Hadfield's explicit computation of
$H_\bullet(B,B_{S^2})$ \cite{tom}
we have 
\begin{align}\label{tommy1}
		  H^0(B,(\omega^{-1})_{S^2})
&\simeq H^0(B,\omega^{-1} \otimes_B
		  B_{S^2}) \nonumber\\ 
&\simeq H_2(B,\omega \otimes_B
			\omega^{-1} \otimes_B
 B_{S^2}) \\
&\simeq H_2(B,B_{S^2}) \simeq
 k.\nonumber
\end{align} 
If $\omega = \omega_{n,1}$, then 
by (\ref{fridge}) we have 
$$
		  (\omega^{-1})_{S^2} \simeq \omega_{-n,0},
$$
and inserting this into Lemma~\ref{wryi}
yields $n=0$, so we have $\omega \simeq
 \omega_{0,1}=B_{S^2}$ 
as claimed in Theorem~\ref{appli}.
\end{proof}


\begin{thebibliography}{99}
\bibitem{bavula} V.~Bavula, 
   \emph{Tensor homological minimal algebras, global dimension of the
   tensor product of algebras and of generalized Weyl algebras},
   Bull.~Sci.~Math.~{\bf 120} no.~3,
   293-335 (1996).

\bibitem{bourbakic} N.~Bourbaki,
		  \emph{Commutative Algebra},
		  Springer (1987).

\bibitem{bourbaki} N.~Bourbaki,
		  \emph{Homological Algebra} (in Russian),
		  Nauka (1987).

\bibitem{bobr} G.~B\"ohm, T.~Brzezi\'nski, 
		  \emph{Strong connections and the
		  Chern-Galois character for
		  corings}, Int.~Math.~Res.~Not.
		  {\bf 42} 2579-2625 (2005). 

\bibitem{bz} K.A.~Brown, J.J.~Zhang,  
\emph{Dualising complexes and 
twisted Hochschild (co)homology for 
noetherian Hopf algebras}, J.~Algebra
		  {\bf 320} no. 5, 1814-1850 (2008).

\bibitem{ce} H.~Cartan, S.~Eilenberg, 
\emph{Homological algebra}, 
Princeton University Press (1956).

\bibitem{chpr} V.~Chari, A.~Pressley,
		  \emph{A guide to quantum
		  groups}, Cambridge University
		  Press (1994).

\bibitem{dijkhuizen} M.~Dijkhuizen, 
 T.~Koornwinder, \emph{Quantum homogeneous 
spaces, duality and quantum $2$-spheres},
Geom. Dedicata {\bf 52} no.~3, 291-315 (1994).

\bibitem{doit} Y.~Doi, M.~Takeuchi,
		  \emph{Hopf-Galois extensions of
		  algebras, the 
Miyashita-Ulbrich action, and Azumaya
		  algebras},  
J. Algebra  {bf 121} no. 2, 488-516
		  (1989). 

\bibitem{dolgushev} V.~Dolgushev, 
		  \emph{The Van den Bergh duality and 
		  the modular symmetry of a Poisson variety},
		  arXiv:math/0612288 (2006). 

\bibitem{farinati} M.~Farinati,
   \emph{Hochschild duality, localization, and smash products},
   J.~Algebra {\bf 284} no.~1
  415-434 (2005).
 
\bibitem{ft} P.~Feng, B.~Tsygan, \emph{Hochschild and 
cyclic homology of quantum groups}, Comm.~Math.~Phys. 
{\bf 140} no. 3, 481-521 (1991). 

\bibitem{kenny2}  K.R.~Goodearl, J.J.~Zhang, 
\emph{Homological properties of quantized coordinate
		  rings of semisimple Lie groups}, 
		  arXiv:math/0510420 (2005).

\bibitem{kenny7}  K.R.~Goodearl, J.J.~Zhang, 
\emph{Noetherian Hopf algebra domains of
Gelfand-Kirillov dimension two} 
		  arXiv:math/0905.0621 (2009).

\bibitem{gk} V.~Ginzburg, S.~Kumar, 
\emph{Cohomology of quantum groups at roots of unity},
Duke Math.~J.~{\bf 69} no.~1, 179-198 (1993).

\bibitem{tom} T.~Hadfield, \emph{Twisted cyclic 
homology of all Podle\'s quantum spheres},
		  J.~Geom.~Phys.  
		  {\bf 57} no.~2, 339-351 (2007). 

\bibitem{kolb1} I.~Heckenberger,
		  S.~Kolb, \emph{Differential
		  forms via the 
Bernstein-Gelfand-Gelfand resolution for
		  quantized irreducible flag
		  manifolds},
J. Geom. Phys.  {\bf 57} no. 11,
		  2316-2344   (2007). 

\bibitem{kolb2} I.~Heckenberger,
		  S.~Kolb, \emph{De Rham complex
		  for quantized irreducible flag
		  manifolds}, J. Algebra {\bf 305}
		  no. 2, 704-741  (2006).

\bibitem{kolb3} I.~Heckenberger,
		  S.~Kolb, \emph{The locally
		  finite part of the dual
		  coalgebra 
of quantized irreducible flag
		  manifolds}, 
Proc. London Math. Soc. (3) {\bf 89}
		  no. 2, 457-484 (2004).

\bibitem{hkr}
G. Hochschild, B. Kostant, A. Rosenberg, 
\emph{Differential forms on regular affine algebras}, 
Trans.~AMS {\bf 102}, 383-408 (1962).

\bibitem{ischebeck} F.~Ischebeck,
\emph{Eine {D}ualit\"at 
zwischen den {F}unktoren {E}xt und {T}or},
J. Algebra {\bf 11}, 510-531 (1969).

\bibitem{jz} P.~J\o rgensen, J.J.~Zhang,
\emph{Gourmet's guide to Gorensteinness},
Adv. Math. {\bf 151} no. 2, 313-345 (2000). 

\bibitem{chef} 
A.~Klimyk, K.~Schm\"udgen, \emph{Quantum groups and their
representations}, Springer (1997).

\bibitem{larson} R.G.~Larson,
		  \emph{Characters of Hopf
		  algebras}, 
J. Algebra  {\bf 17}, 352-368 (1971). 

\bibitem{nu} N.~Kowalzig, U.~Kr\"ahmer,
		  \emph{Duality and products in
		  algebraic (co)homology theories}, arXiv:0812.4312 (2008).  

\bibitem{uli} U.~Kr\"ahmer, \emph{Poincare duality 
in Hochschild (co)homology}, Proceedings of 
"New techniques in Hopf algebras and 
graded ring theory 2006 ", 117-125 (2007). 

\bibitem{ulid} U.~Kr\"ahmer, \emph{Dirac
		  operators on quantum flag
		  manifolds},
		  Lett. Math. Phys. {\bf 67}
		  no. 1, 49-59  (2004).

\bibitem{arabian} U.~Kr\"ahmer,
		  \emph{The Hochschild cohomology
		  ring of the standard 
Podle\'s quantum sphere},
		  Arab. J. Sci. Eng. Sect. C Theme
		  Issues  {\bf 33} no. 2, 325-335  (2008). 

\bibitem{loday} J.-L.~ Loday,
\emph{Cyclic homology},
Springer-Verlag (1998).

\bibitem{mas} T.~Masuda, K.~Mimachi, Y.~Nakagami,
		  M.~Noumi, K.~Ueno, \emph{Representations of 
quantum groups and a $q$-analogue of orthogonal
		  polynomials},
C.~R.~Acad.~Sci.~Paris Ser.~I Math.~ {\bf 307}   
no. 11, 559-564 (1988).

\bibitem{mw} A.~Masuoka, D.~Wigner,
\emph{Faithful flatness for Hopf algebras}, 
J. Algebra {\bf 170} no. 2, 156-164 (1994).

\bibitem{matsumura2} H,~Matsumura, \emph{Commutative
		  ring theory}, Cambridge University Press (1986). 

\bibitem{mccr} J.C.~McConnell, J.C.~Robson, 
\emph{Noncommutative Noetherian rings}, AMS (2001).

\bibitem{ms} E.~M\"uller, H.-J.~Schneider,
\emph{Quantum homogeneous spaces with faithfully flat
		  module structures},
		  Israel J. Math.  {\bf 111} 157-190 (1999).

\bibitem{Po} P. Podle\'s,
\emph{Quantum spheres},
Lett. Math. Phys. {\bf 14}, 193-202 (1987).

\bibitem{priddy} S.~Priddy,\emph{Koszul
		  resolutions}, Trans.~AMS {\bf
		  152} no. 1, 39-60 (1970).

\bibitem{pete} P.~Schauenburg,
\emph{Faithful flatness over Hopf 
subalgebras - counterexamples}, Proceedings 
of "Interactions between ring theory and 
representations of algebras" 331-344 (2000).

\bibitem{skryabin} S.~Skryabin, \emph{Projectivity 
and freeness over comodule algebras}, 
Trans.~Amer.~Math.~Soc.~{\bf 359} no. 6, 2597-2623 (2007).

\bibitem{stokman} J.~Stokman, \emph{The
		  quantum orbit method for 
generalized flag manifolds},
		  Math. Res. Lett.  {\bf 10}
		  no. 4, 469-481  (2003).

\bibitem{sweedler} M.~Sweedler,
		  \emph{Hopf algebras},
		  W.A.~Benjamin (1969).


\bibitem{takeuchi} M.~Takeuchi,
\emph{Relative Hopf modules - equivalences and freeness
		  criteria}, J.~Algebra {\bf 60}, 452-471 (1997).

\bibitem{vdb} M.~Van den Bergh, 
 \emph{A relation between Hochschild homology
and cohomology for Gorenstein rings},
Proc. Amer. Math. Soc. {\bf 126}, no. 5, 1345-1348
(1998). Erratum: Proc. Amer. Math. Soc. 
{\bf 130}, no. 9, 2809-2810 (electronic) (2002).

\bibitem{waterhouse} W.C.~Waterhouse,
\emph{Introduction to affine group schemes},
Springer (1979).

\bibitem{weibel} C.~Weibel,
\emph{An introduction to homological algebra},
Cambridge Univ. Press (1995).
\end{thebibliography}
\end{document}